\documentclass[11pt,reqno]{amsart}
\usepackage{amsfonts,amssymb,amsmath,amsopn,amsthm,graphicx}
\usepackage{amsxtra, mathrsfs}
\usepackage{colordvi}
\usepackage[usenames,dvipsnames]{color}
\usepackage{amsfonts,amssymb,amsbsy,amsmath,amsthm,dsfont}
%%%%%%%%%%%

%\documentclass[10pt,a4paper,american,reqno]{amsart}
%\usepackage[notcite, notref]{showkeys}
%\usepackage{babel}
\usepackage{amsthm}
\usepackage{amsbsy}
\usepackage{amstext}
\usepackage{accents}
\usepackage{amsfonts}
\usepackage{mathcomp}
\usepackage{mathtools}
\usepackage{dsfont}
\usepackage{tabularx}
\usepackage{latexsym}
\usepackage{amssymb}
\usepackage{esint}
%\usepackage[colorlinks,pdfpagelabels,pdfstartview = FitH,bookmarksopen = true,bookmarksnumbered = true,linkcolor = blue,
%plainpages = false,hypertexnames = false,citecolor = red,pagebackref=true]{hyperref}
\usepackage{cite}

%\setlength{\voffset}{-.7truein}
%\setlength{\textheight}{8.8truein}
%\setlength{\textwidth}{6.2truein}
%\setlength{\hoffset}{-.7truein}

%%%%%%%%%%%%%%%%%%%%%%%%%%%%%%%%%%%%
\usepackage{lipsum}
\usepackage[many]{tcolorbox}
\usetikzlibrary{decorations.pathreplacing}
\linespread{1.1} \numberwithin{equation}{section}

\newtcolorbox{leftbrace}{ %
	enhanced jigsaw, 
	breakable, % allow page breaks
	frame hidden, % hide the default frame
	overlay={%
		\draw [
		decoration={brace,amplitude=0.5em},
		decorate,
		ultra thick,
		]
		% right line
		(frame.south west)--(frame.north west);
	},
	% paragraph skips obeyed within tcolorbox
	parbox=false,
}
%%%%%%%%%%%%%%%%%%%%%%%%%%

%%%%%%%%%%%%%%%%%%%%%%%%%%%%%%
\newtheorem{theorem}{Theorem}[section]
\newtheorem{corollary}[theorem]{Corollary}
\newtheorem{cor}[theorem]{Corollary}
\newtheorem{thm}[theorem]{Theorem}
\newtheorem{lem}[theorem]{Lemma}

\newtheorem{lemma}[theorem]{Lemma}

\newtheorem{example}[theorem]{Example}
\newtheorem{assumptions}[theorem]{Assumptions}
\newtheorem{definition}[theorem]{Definition}

\newtheorem{remark}[theorem]{Remark}

\newtheorem{rem}[theorem]{Remark}
%%%%%%%
\newtheorem{theo}[theorem]{Theorem}
\newtheorem{prop}[theorem]{Proposition}
%\newtheorem{lem}[theorem]{Lemma}
%\newtheorem{definition}[Theorem]{Definition}

%%%%%%%%%%%%%%%%%%%%%%%%%%%%
%\theoremstyle{definition}

%\newtheorem{remark}[theorem]{Remark}
\newtheorem{assumption}[theorem]{Assumption}
\newtheorem{defin}[theorem]{Definition}%[section]
%*\newtheorem*{lem*}{Lemma}
%\newtheorem{rem}[theorem]{Remark}
%\newtheorem{example}[theorem]{Example}

%\newtheorem*{cor*}{Corollary}
%\newtheorem*{theo*}{Theorem}

%\renewcommand{\proof}[1][\proofname]{\par \pushQED{\qed}\normalfont \relax{\noindent\bfseries #1.}} 

%%%%%%%%%%%%%%%%%

\newcommand{\Hmm}[1]{\leavevmode{\marginpar{\tiny%
			$\hbox to 0mm{\hspace*{-0.5mm}$\leftarrow$\hss}%
			\vcenter{\vrule depth 0.1mm height 0.1mm width \the\marginparwidth}%
			\hbox to
			0mm{\hss$\rightarrow$\hspace*{-0.5mm}}$\\\relax\raggedright #1}}}
%%%%%%%%%%%%%%%%%%%%

\DeclareMathOperator*{\osc}{osc}

\newcommand\red[1]{\textcolor{red}{#1}}

\newcommand{\R}{\ensuremath{\mathbb{R}}}

\newcommand{\C}{\mathrm{C}}

\def\Xint#1{\mathchoice
    {\XXint\displaystyle\textstyle{#1}}%
    {\XXint\textstyle\scriptstyle{#1}}%
    {\XXint\scriptstyle\scriptscriptstyle{#1}}%
    {\XXint\scriptscriptstyle\scriptscriptstyle{#1}}%
    \!\int}
\def\XXint#1#2#3{\setbox0=\hbox{$#1{#2#3}{\int}$}
    \vcenter{\hbox{$#2#3$}}\kern-0.5\wd0}

\def\dashint{\Xint{\raise4pt\hbox to7pt{\hrulefill}}}

\def\XXiint#1#2#3{\setbox0=\hbox{$#1{#2#3}{\iint}$}
    \vcenter{\hbox{$#2#3$}}\kern-0.5\wd0}

\renewcommand{\epsilon}{\varepsilon}

\renewcommand{\rho}{\varrho}

\renewcommand{\epsilon}{\varepsilon}
\renewcommand{\rho}{\varrho}
\renewcommand{\d}{\:\! \mathrm{d}}

\DeclareMathOperator{\loc}{loc}
\DeclareMathOperator{\diam}{diam}

\numberwithin{equation}{section}
\allowdisplaybreaks
%%%%%%%%%%%%%%%%%%%%%
%\numberwithin{equation}{section}

%\newcommand{\loc}{{\mathrm{loc}}}
\newcommand{\RN}[1]{%
	\textup{\uppercase\expandafter{\romannumeral#1}}%
}

\newcommand{\dx}{\,\mathrm{d}x}
\newcommand{\dy}{\,\mathrm{d}y}

\newcommand{\dt}{\,\mathrm{d}t}

\newcommand{\ds}{\,\mathrm{d}s}

%%%%%%%%%%%%%%%%%%%%%%%%

\newcommand{\be}{\begin{equation}}
\newcommand{\ee}{\end{equation}}
%\newcommand{\mysection}[1]{\section{#1}\setcounter{equation}{0}}
%%%%%%%%%%%%%%%
\newcommand{\bea}{\begin{eqnarray}}
\newcommand{\eea}{\end{eqnarray}}
\newcommand{\bean}{\begin{eqnarray*}}
	\newcommand{\eean}{\end{eqnarray*}}

%%%%%%%%%%%%%%%%%

\newcommand{\dist}[2]{\mbox{\rm dist}\,(#1,#2)}
\newcommand{\opname}[1]{\mbox{\rm #1}\,}
\newcommand{\supp}{\opname{supp}}

\newlength{\wex}  \newlength{\hex}
%
%%Macros for changing font size in math.
% only for letters, numbers
% only for letters, numbers

\newcommand{\ass}[1]{Let Assumptions~\ref{assump1} hold  in a bounded Lipschitz domain $\Gw$}
%%%%%%%%%%%%%%%%%%%%%%%%%%%%%%%Macros for Greek letters.

%%%%%%%%%%%%%%%%%%%%%%%%%%%
            
       \def\gd{\delta}      
                         
       \def\vgf{\varphi}    \def\gh{\eta}

      \def\gw{\omega}
                \def\gz{\zeta}

\def\Gw{\Omega}              
%%%%%%%%%%%%%

%%%%%%%%%%%%%%%%%%%%%%%
\begin{document}
\renewcommand{\refname}{References} 
\renewcommand{\abstractname}{Abstract} 
\title[Wolff class Fuchsian potential]{Positive solutions of quasilinear elliptic equations with Fuchsian potentials in\\ Wolff class}
\author[R.~Kr.~Giri]{Ratan Kr. Giri}
\address{Ratan Kr. Giri\\
	\newline
	Department of Mathematics, Technion - Israel Institute of Technology, 
	3200 Haifa, Israel 
	\newline
Department of Mathematics, The LNM Institute of Information Technology, Jaipur - 302031, India}
\email{giri90ratan@gmail.com, ratan.giri@lnmiit.ac.in}
\author[Y.~Pinchover]{Yehuda Pinchover}
\address{Yehuda Pinchover\\
	\newline
	Department of Mathematics, Technion - Israel Institute of Technology\\
	3200 Haifa, Israel}
\email{pincho@technion.ac.il}

\date{\today}
\subjclass[2010]{Primary 35B53; Secondary 35B09, 35J62, 35B40}
\keywords{Fuchsian singularity, Morrey spaces, Kato Class, Wolff class, Liouville theorem, Quasilinear elliptic equation, $(p,A)$-Laplacian}

\begin{abstract}
Using  Harnack's inequality and a scaling argument we study Liouville-type theorems and the asymptotic behaviour of positive solutions near an isolated singular point $\zeta \in \partial\Omega\cup\{\infty\}$ for the quasilinear elliptic equation
$$-\text{div}(|\nabla u|_A^{p-2}A\nabla u)+V|u|^{p-2}u =0\quad\text{ in } \Omega,$$
where $\Omega$ is a domain in $\mathbb{R}^d$, $d\geq 2$, $1<p<d$, and $A=(a_{ij})\in L_{\rm loc}^{\infty}(\Omega; \mathbb{R}^{d\times d})$ is a symmetric and locally uniformly positive definite matrix. It is assumed that the potential $V$ belongs to a certain Wolff class and has  a generalized Fuchsian-type singularity at an isolated point $\zeta\in \partial \Omega \cup \{\infty\}$.
\end{abstract}
\makeatother

\maketitle

\section{Introduction}
This paper studies positive Liouville-type theorems and removable singularity theorems for the $(p,A)$-Laplacian type elliptic equation  
\begin{equation}\label{plaplace}
	Q(u)=Q_{p,A,V}(u) :=
	 -\Delta_{p,A}(u)+V|u|^{p-2}u =0\qquad \text{in } \Omega
\end{equation}
 near an isolated singular point $\zeta\in \partial \Omega\cup\{\infty\}$. Here $\Omega$ is a domain in $\mathbb{R}^d$ ($d\geq 2)$ with boundary $\partial\Omega$, $1<p<d$, $V$ is a real valued potential belonging to a certain Wolff space $\mathfrak{W}^p_{\loc}(\Omega)$ (see Definition~\ref{wolff_cls}), and  
 $$\Delta_{p,A}(u):=\text{div}(|\nabla u|_A^{p-2} A\nabla u)$$ is the $(p,A)$-Laplacian, where $A=(a_{ij})\in L_{\rm loc}^{\infty}(\Gw;\R^{d\times d})$ is a symmetric and locally uniformly positive definite matrix valued function, and
 \be\nonumber
  |\xi|_{A}^2 =|\xi|_{A(x)}^2
 :=
 A(x)\xi\cdot\xi=\!\!\!\sum_{i,j=1}^d a_{ij}(x)\xi_i\xi_j
 \;\; \;\;\;x\in\Gw, \;\xi=(\xi_1,...,\xi_d) \!\in \!\R^d.
 \ee
 
 Liouville theorems and asymptotic behaviours of positive solutions of second-order quasilinear elliptic operators near an {\em isolated singular point} $\zeta\in \partial \Omega\cup\{\infty\}$ was studied extensively in the past few decades, see \cite{frass_pinchover,frass_pinchover2,HKM,LLM,LS,LS_1,S,V} and references therein. In particular, we mention the work of Frass and Pinchover\cite{frass_pinchover}, where Liouville theorems and  removable singularity theorems for positive solutions of \eqref{plaplace} have been obtained   for the entire range $1\!<\!p\!<\!\infty$ under the assumptions that  $A$ is the identity matrix, and $V\!\in\! L^\infty_{\loc}(\Omega)$ has a {\em pointwise Fuchsian-type singularity at}  $\gz\! \in \!\{0,\infty\}$, namely, 
 \begin{equation*}\label{eq_pFs}
 	|V(x)|\leq \frac{C}{|x|^p}\qquad \mbox{near } \gz.
 \end{equation*} 
 Furthermore, in the same paper and  in \cite{frass_pinchover2}, the asymptotic behaviour of the {\it quotient} of two positive solutions near the singular point $\zeta$ has been established. The outcomes in \cite{frass_pinchover,frass_pinchover2} extend  to the quasilinear case results obtained in \cite[and references therein]{pinchover} for positive classical solutions of a second-order {\em linear} elliptic operators in a  non-divergence form.
 
Recently, based on criticality theory for $Q_{p,A,V}$ with $V$ in a local Morrey space $M^q_{\loc}(p;\Omega)$ (see Definition~\ref{morrey_sp}) established in \cite{pinchover_psaradakis}, the aforementioned  results were further extended in \cite{giri_pinchover} to the case where $A=(a_{ij})\in L_{\rm loc}^{\infty}(\Gw;\R^{d\times d})$ is a symmetric and locally uniformly positive definite matrix valued function, and $V$ belongs to   $M^q_{\loc}(p;\Omega)$ which has a generalized Fuchsian-type singularity at $\gz$.  More precisely, let  $\mathcal{A}_R:=\{x\in\mathbb{R}^d: R/2\leq |x|<3R/2\}$. $V\in M^q_{\loc}(p;\Omega)$ is said to have  a {\em generalized  Fuchsian-type singularity at $\gz$} if for every relative punctured neighbourhood $\Omega' \subset \Omega$ of $\zeta$, there exists a positive constant $C$ and $R_0>0$ such that
\begin{align}\label{fuch_morrey}
	\begin{cases}
		\||x|^{p-{d}/{q}}\, V\|_{M^q(p;\mathcal{A}_R\cap \Omega')}\leq C  & \text{if } p\neq d, \\[2mm]
		\|V\|_{M^q(d;\mathcal{A}_R\cap\Omega')}\leq C & \text{if } p=d, 
	\end{cases}
\end{align}
for all $0<R<R_0$ if $\gz=0$, and $R>R_0$ if $\gz=\infty$. 

The results in  \cite{giri_pinchover} covers the full range of $1<p<\infty$ as in \cite{frass_pinchover,frass_pinchover2}. Note that for $p>d$,  $M^q_{\loc}(p;\Omega)= L^1_{\loc}(p;\Omega)$, which is mainly the weakest assumption for the well definiteness of weak solutions, and for ensuring  the local Harnack inequality and the  H\"older continuity of weak solutions of the equation $Q(u)=0$.

On the other and, by Lemma~\ref{lem1} and remarks \ref{remrk} and \ref{remrk1}, the local Morrey space $M^q_{\loc}(p;\Omega)$ for $1<p<d$ is a proper subset of the local Stummel-Kato class $K^p_{\loc}(\Omega)$ (see Definition~\ref{kato_1}), which in turn is a proper subset of the local Wolff class $\mathfrak{W}^p_{\loc}(\Omega)$ (see Definition~\ref{wolff_cls}). It is important to remark here that when $1<p<d$, the assumption that $V\in M^q_{\loc}(p;\Omega)$ is the weakest to ensure the local H\"older continuity of weak solutions of \eqref{plaplace} \cite{pinchover_psaradakis}. That is, if $V\in K^p_{\loc}(\Omega)$ or  $V\in \mathfrak{W}^p_{\loc}(\Omega)$, then weak solutions of \eqref{plaplace} may not be H\"older continuous, see \cite{chiarenza et al}, \cite{Difazio} and \cite{ragusa_zamboni} for counter examples.

 %\medskip
 
 The purpose of this paper is to  extend the results obtained in \cite{frass_pinchover, frass_pinchover2, giri_pinchover} for $1<p<d$. We study the asymptotic behaviour of positive weak solutions near a Fuchsian-type singular point $\zeta$, and prove removable singularity theorems for positive solutions of \eqref{plaplace} when the potential $V$ lies in a local Wolff space and having a Fuchsian-type singularity at $\zeta$ (see Definition~\ref{fuchsian}). More precisely, after establishing the needful parts of criticality theory for $Q_{p,A,V}$ with $V \! \in \!\mathfrak{W}^p_{\loc}(\Omega)$, we study the uniqueness (up to a multiplicative constant) of certain positive weak solutions and the asymptotic behaviour of the quotients of two positive weak solutions of the quasilinear elliptic equation \eqref{plaplace} near the singular point $\zeta$. Our main tool is a uniform Harnack inequality and a scaling argument using  the quasi-invariance of  \eqref{plaplace} under the map $x\mapsto Rx$, where $x\in \Gw$ and $R>0$. The novelty of this work lies in the fact that  $V$ is assumed to be in the Wolff class $\mathfrak{W}^p_{\loc}(\Omega)$ which implies that solutions of \eqref{plaplace} are not necessarily H\"older continuous. Another difficulty arises since  $\mathfrak{W}^p(\omega)$ is not a Banach space for $p>2$ for subdomains $\gw\Subset\Gw$. 
 
 %\medskip
 
 The paper is organized as follows. In Section \ref{sec_pre}, we  recall various function spaces such as Morrey, Kato and Wolff spaces, and provide basic results related to these spaces. Section~\ref{pde_wolff} is devoted to  criticality theory  for  the operator $Q_{p,A,V}$ with $V$ in the local Wolff class  $\mathfrak{W}^p_{\loc}(\Omega)$. In particular, we prove the corresponding Harnack convergence principle, weak maximum, and weak comparison principles. Finally, in Section~\ref{FCH}, we introduce the notion of a Fuchsian-type singularity, discuss some related notions, and prove the  main results of the paper. In particular, we present a uniform Harnack inequality and a ratio limit theorem for positive solutions defined near the singular point $\zeta$. Finally, we obtain a positive Liouville-type theorem under the further assumption that $Q$ has a {\em weak} Fuchsian-type singularity at $\zeta$.
 
\section{Morrey, Kato and Wolff spaces}\label{sec_pre}
We begin with some standard notation.  Let $\Omega$ be a domain  (i.e., a nonempty open connected set) in $\mathbb{R}^d$, $d\geq 2$. We denote the ball in $\mathbb{R}^d$ of radius $r>0$ centered at $x$ by $B_r(x)$.  Similarly, $S_r(x):= \partial B_r(x)$ denotes the sphere of radius $r>0$ centered at $x$, and we set $B_r:=B_r(0)$, $S_r:=S_r(0)$.  Further, denote by $B_r^*:=\mathbb{R}^d\setminus\overline{B_r}$ and $(\R^d)^*:=\R^d\setminus \{0\}$ the corresponding exterior domains. 

Let $f,g\in C(\Omega)$ be two positive functions. By $f\asymp g$ in $\Omega$, we mean that there exists positive constant $C$ such that 
$$C^{-1}g(x)\leq f(x)\leq C g(x)\quad \text{ for all } \,x\in \Omega.$$
We write $\Omega_1\Subset\Omega_2$ if $\Omega_2$ is open and $\overline{\Omega}_1$ is compact (proper) subset of $\Omega_2$. By a {\em compact exhaustion} of a domain $\Gw$,  we mean a sequence $\{\Gw_i\}_{i=1}^\infty$ of smooth, relatively
compact domains in $\Gw$ such that $\Gw_1 \neq \emptyset$,  $\Gw_i \Subset \Gw_{i + 1}$, and 
$\cup_{i = 1}^{\infty} \Gw_i = \Gw$.  
Finally, throughout the paper, $C$ refers to a positive constant which may vary from  line to line.

\medskip

In this section we introduce various function spaces such as Morrey, Kato and Wolff spaces, and discuss the relationships between them. 
\subsection{Morrey spaces}
We recall certain classes of Morrey spaces.
\begin{defin}[Morrey spaces]{\em 
	Let $f\in L^1_{\loc}(\Omega)$, for $q\in[1, \infty]$, we say that $f$ belongs to the local Morrey space $M^q_{\loc}(\Omega)$ if for any $\omega\Subset\Omega$
	$$\|f\|_{M^q(\omega)}:= \sup_{\substack{y\in \omega\\0<r<\text{diam}(\omega)}}\frac{1}{r^{d/q'}}\int_{\omega\cap B_r(y)}|f(x)|\dx<\infty,$$
	where $q' \!=\! q/(q\!-\!1)$ is the H\"older conjugate exponent of $q$. An application of H\"older inequality implies that $L^q_{\loc}(\Omega)\!\subsetneq\! M^q_{\loc}(\Omega)\!\subsetneq\! L^1_{\loc}(\Omega)$ for any $q\!\in\!(1, \infty)$. 
}
\end{defin}
Next we recall a special local Morrey space $M^q_{\loc}(p;\Omega)$ which depends on the underlying exponent $1<p<\infty$. 
\begin{defin}[Local Morrey spaces]\label{morrey_sp}{\em 
	Let
	$$M^q_{\loc}(p;\Omega):=\begin{cases}
	M^q_{\loc}(\Omega) \mbox{ with } q>d/p &\mbox{ if } p<d,\\
	L^1_{\loc}(\Omega)  & \mbox{ if } p>d,
	\end{cases}$$
	while for $p=d$, a function $f\in L^1_{\loc}(\Gw)$ is said to belong to the Morrey space $M^q_{\loc}(d;\Omega)$ if for some $q>d$ and any $\omega\Subset\Omega$
	$$ \|f\|_{M^q(d;\omega)}:= \sup_{\substack{y\in \omega\\0<r<\text{diam}(\omega)}}\varphi_q(r)\int_{\omega\cap B_r(y)}|f|\dx<\infty,$$
	where $\varphi_q(r)\!:=\!\log ^{q/d'}\!\!\left(\!\!\frac{\text{diam}(\omega)}{r}\!\!\right)\!$ (see \cite[Theorem~1.94]{maly_ziemer}, and references therein).
}
\end{defin}

\subsection{ Stummel-Kato class}
Next, we introduce the  {\it Stummel-Kato}  class of locally integrable functions for $1<p<d$.
\begin{defin}[{\it Stummel-Kato class}]\label{kato_1}{\em 
	Let $1<p<d$. We say that $V\in L^1_{\loc}(\Gw)$ belongs to the class $\bar{K}^p_{\loc}(\Omega)$ if for any $\omega\Subset \Gw$
	$$ \eta_{V,\gw}(r) :=\sup_{x\in \omega}\int_{\omega\cap B_r(x)}\frac{|V(y)|}{|x-y|^{d-p}}\d y <\infty \qquad \forall\,  0<r<\diam(\omega).$$
	A function $V\in \bar{K}^p_{\loc}(\Omega)$ is said to belong to the local {\it Stummel-Kato class} $K^p_{\loc}(\Omega)$ if for every $\omega\Subset\Gw$  we have   $\displaystyle{\lim_{r\rightarrow 0} \eta_{V,\gw}(r)=0}$.	(see for example, \cite[Definition 1.1]{Difazio} for $p=2$, and \cite[Definition 2.1]{ragusa_zamboni} for $1<p<d$).  If $\Gw=\R^d$ we write $\eta_{V}(r):= \eta_{V,\R^d}(r)$.
}
\end{defin}
We call $\eta_{V,\gw}(r)$  the {\it Stummel-Kato modulus} of $V$ in $\gw$. It is known that  $\eta_{V,\gw}(r)$ is a nondecreasing continuous function of $r$ with the doubling property \cite{ragusa_zamboni}, i.e., there exists constant $\mathcal{K} \in(0,1)$ such that 
$$\mathcal{K}\eta_{V,\gw}(2r)\leq \eta_{V,\gw}(r) \qquad \forall\, r>0.$$
The constant $\mathcal{K}$ is called the doubling constant of $\eta_{V,\gw}$. The Stummel-Kato class $K^p_{\loc}(\Omega)$ is equipped with the  seminorms given by 
$$\|V\|_{K^p(\omega)}:= \eta_{V,\gw}(\diam(\omega)),$$ 
for every $\omega\Subset\Omega$, and therefore, $K^p_{\loc}(\Omega)$ is a Fr\'echet space. In fact, for any $\omega\Subset\Gw$ and a fixed $0<r<\diam(\omega)$, $\eta_{V,\gw}(r)$ defines a semi-norm in $K^p_{\loc} (\Omega)$ and $\eta_{V,\gw}(r)\leq \|V\|_{K^p(\omega)}$.

When $p=2$, $K^2_{\loc}(\Gw)$ is the classical Kato class which was introduced by Kato \cite{kato} to study the self-adjointness of Schr\"odinger operators. Aizenman and Simon \cite{AS} for $A=I$, and  later Chiarenza et al. \cite{chiarenza et al} established  for $p=2$ a local Harnack inequality and the continuity of solutions for the Schr\"odinger-type equation $-\mathrm{div}(A\nabla u)+Vu=0$, where $A$ is a symmetric matrix such that $A\in L^\infty_{\loc}(\R^d)$  is locally uniformly elliptic, and $V$
belongs to the {\it Stummel-Kato} class. For $1<p<d$, the higher order {\it Stummel-Kato} class potentials can be traced back to Davies and Hinz work \cite{davies_hinz}. In general, the study of higher-order Schr\"odinger operators with potentials in {\it Stummel-Kato class} $K^p_{\loc}(\Omega)$ turns out to be more difficult than in the case $p=2$. More details about the {\it Stummel-Kato} class and related theory in PDEs can be found for example in Rasgusa and Zamboni \cite{ragusa_zamboni}, Zamboni \cite{zamboni}, Zheng and Yao \cite{zheg_yao}, and in references therein.

\medskip

Lemma \ref{lem1} below asserts that for $1<p<d$, the local Morrey space $M^q_{\loc}(p;\Omega)$ is a (proper) subset of $K^p_{\loc}(\Omega)$. For a proof see \cite[Lemma 1.29]{maly_ziemer}.
\begin{lem}\label{lem1}
	Let $1<p<d$ and $f\in M^q_{\loc}(p;\Omega)$ with $q>d/p$. Then there exists a constant $C>0$ such that for $0<r<\diam(\omega)$
	$$\int_{\omega\cap B_r(x)} \frac{|f(y)|}{|x-y|^{d-p}}\d y \leq C r^{p-d/q} \|f\|_{M^q(\omega)}.$$
\end{lem}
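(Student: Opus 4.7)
The plan is to use a standard dyadic annular decomposition of $B_r(x)$ around the singular point of the Riesz kernel. Split
\[
B_r(x) = \bigsqcup_{k\geq 0} A_k, \qquad A_k := B_{r 2^{-k}}(x)\setminus B_{r 2^{-k-1}}(x),
\]
so that on $A_k$ we have the lower bound $|x-y|\geq r 2^{-k-1}$, and therefore $|x-y|^{-(d-p)} \leq (r 2^{-k-1})^{-(d-p)}$ (using $d-p>0$, which is the assumption $p<d$). This replaces the singular kernel by a constant on each annulus.

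Next, on each annulus I would apply the defining estimate of the Morrey norm. Since $A_k\subset B_{r 2^{-k}}(x)$ and we may assume $x\in\omega$ (otherwise the lemma is applied along with a harmless enlargement), we get
\[
\int_{\omega\cap A_k}|f(y)|\,\mathrm{d}y \leq \int_{\omega\cap B_{r 2^{-k}}(x)}|f(y)|\,\mathrm{d}y \leq (r 2^{-k})^{d/q'}\|f\|_{M^q(\omega)},
\]
valid because $r 2^{-k}\leq r<\diam(\omega)$ lies in the admissible range of radii. Here $d/q' = d - d/q$.

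Combining the two bounds and summing yields
\[
\int_{\omega\cap B_r(x)}\frac{|f(y)|}{|x-y|^{d-p}}\,\mathrm{d}y \leq 2^{d-p}\|f\|_{M^q(\omega)}\sum_{k=0}^\infty (r 2^{-k})^{d-d/q}(r 2^{-k})^{-(d-p)} = 2^{d-p} r^{p-d/q}\|f\|_{M^q(\omega)}\sum_{k=0}^\infty 2^{-k(p-d/q)}.
\]
The scaling in $r$ comes out correctly and, crucially, the geometric series converges \emph{precisely} because $q>d/p$, i.e.\ $p-d/q>0$. This is the only place where the Morrey exponent assumption enters, and it is the one check to make rather than a real obstacle — the decomposition argument itself is routine. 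The resulting constant $C = 2^{d-p}\bigl(1-2^{-(p-d/q)}\bigr)^{-1}$ depends only on $d,p,q$, as claimed.
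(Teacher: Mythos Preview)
Your proof is correct. The dyadic annular decomposition is the standard way to prove this Riesz-potential estimate for Morrey functions, and your computation of the exponents and the convergence check for the geometric series are both right. The paper itself does not supply a proof of this lemma but merely cites \cite[Lemma~1.29]{maly_ziemer}; the argument there is essentially the same dyadic decomposition you have written out, so your approach matches the intended one.

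One minor point: the Morrey norm $\|f\|_{M^q(\omega)}$ in the paper is defined with the supremum taken over centers $y\in\omega$, so the inequality is really meant for $x\in\omega$ (and that is how it is used, e.g.\ in the definition of the Stummel--Kato modulus). Your parenthetical remark about assuming $x\in\omega$ is therefore the correct reading of the statement rather than an additional hypothesis you are imposing.
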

We next give a subclass of $K^p_{\loc}(\Omega)$ in which each {\it Stummel-Kato modulus} $\eta_{V,\gw}(r)$ of $V\in K^p_{\loc}(\Omega)$ satisfies a particular integral condition near zero.  
\begin{defin}[{See \cite{ragusa_zamboni, zamboni}}]{\em
		Let $1<p<d$,  $\sigma \in (0,1)$, and let $\theta := {p}/{(\sigma p'+p)}$, so, $0<1-\theta=\frac{\sigma}{\sigma+p-1}<1$. A function $V\in K^p_{\loc}(\Omega)$ is said to belong to the class $\tilde{K}^p_{\loc}(\Omega)$ if there exists $\delta>0$ such that 
		$$\int_0^\delta \frac{1}{t}\left(\int_0^t \frac{\eta_V^{1-\theta}(s)}{s}\ds\right)^{\frac{1}{p}}\d t <\infty.$$ 
	}
\end{defin}
\begin{rem}\label{remrk}{\em
 Clearly, $\tilde{K}^p_{\loc}(\Omega)\subset K^p_{\loc}(\Omega)\subset \bar{K}^p_{\loc}(\Omega)$. Also, it can be seen that for $1<p<d$ the local Morrey space $M^q_{\loc}(p;\Omega)$ is in fact, a proper subspace of the class $\tilde{K}^p_{\loc}(\Omega)$. Indeed, if $f\in M^q(p;\omega)$, then by Lemma \ref{lem1}, we have $\eta_{V}^{1-\theta}(s)\leq C s^{(p-d/q)(1-\theta)} \|f\|^{1-\theta}_{M^q(\omega)}$. Hence for $\delta>0$,
 \begin{align*}
\int_0^\delta \frac{1}{t}& \left(\int_0^t \frac{\eta_V^{1-\theta}(s)}{s}\ds\right)^{\frac{1}{p}}\!\!\dt\leq C\|f\|^{(1-\theta)/p}_{M^q(\omega)}\int_0^\delta \frac{1}{t}\left(\int_0^t s^{(p-d/q)(1-\theta)-1}ds\right)^{1/p}\!\!\!\!\dt\\
&= C\|f\|^{(1-\theta)/p}_{M^q(\omega)}\int_0^\delta t^{\frac{(p-d/q)(1-\theta)}{p}-1}\d t =C\|f\|^{(1-\theta)/p}_{M^q(\omega)} \delta^{\frac{(p-d/q)(1-\theta)}{p}}<\infty.
 \end{align*}}
\end{rem}
The next result is an uncertainty-type inequality due to Ragusa and Zamboni \cite[Corollaries 2.6, 2.7]{ragusa_zamboni} (see also \cite[Theorem 2.1, Corollary 2.2]{zamboni}).
\begin{thm}\label{thm_uncrtn}
	Let $1<p<d$, $0<\theta<1$ as above, and $\omega \Subset \mathbb{R}^d$.  Suppose that a function $V\in K^{p}(\omega)$ satisfies $$\int_0^\rho \frac{\eta_V^{1-\theta}(s)}{s}\ds<\infty\qquad \mbox{for some  $\rho> 0$},$$ and let 
	$$\Phi(r): = C(d) \int_0^r\frac{\eta_V^{1-\theta}(t)}{t}\d t.$$
	 Then 
	\begin{itemize}
		\item[(i).] There exists a constant $C(d,p)$ such that 
		$$\int_{\omega\cap B_r} |V(x)||u(x)|^p\dx \leq C(d, p) \Phi(r) \int_{\omega\cap B_r}|\nabla u(x)|^p\dx$$
		for every $u\in C_c^\infty(w)$ with $\supp (u)\subseteq \omega\cap B_r$. 
		\item[(ii).] For any $\delta >0$ the following inequality holds true for any  $u\in C_c^\infty(\omega)$
		$$\int_\omega |V(x)||u(x)|^p\dx \leq \delta \int_\omega |\nabla u(x)|^p \dx + \frac{C_1(\diam(\omega))\delta}{\left[\Phi^{-1}\left(\frac{\delta}{C(d,p,\gh_V)}\right)\right]^{d+p}} \int_\omega |u(x)|^p \dx.$$
	\end{itemize}	 
\end{thm}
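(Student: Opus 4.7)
The plan for (i) is a Riesz-potential/Hedberg interpolation combined with a dyadic evaluation of the inner integral in terms of $\eta_{V,\omega}$. Since $u\in C_c^\infty(\omega)$ with $\supp(u)\subseteq \omega\cap B_r$, one has the standard pointwise bound
\[
|u(x)|\leq C(d)\int_{B_r(x)}\frac{|\nabla u(y)|}{|x-y|^{d-1}}\,\dy.
\]
Raising to the $p$-th power and performing a H\"older split of the kernel $|x-y|^{1-d}$, calibrated so that one factor pairs with $|\nabla u(y)|^p$ to yield precisely the Riesz kernel $|x-y|^{p-d}$ appearing in the definition of $\eta_{V,\omega}$ while the other factor is integrable in $y$, produces a pointwise estimate of the form
\[
|u(x)|^p \leq C(d,p)\,\Psi_r(x)\int_{B_r(x)}\frac{|\nabla u(y)|^p}{|x-y|^{d-p}}\,\dy,
\]
in which the weight $\Psi_r(x)$ carries the dependence on $\theta$. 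The value $\theta=p/(\sigma p'+p)$ is fixed precisely to balance the exponents in the split.

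Next, I would multiply by $|V(x)|$, integrate over $\omega\cap B_r$ and use Fubini, reducing the estimate to bounding $\int_{B_r(y)}|V(x)|\Psi_r(x)|x-y|^{p-d}\,\dx$ for each $y$. This inner integral is controlled by decomposing $B_r(y)$ into dyadic annuli $\{2^{-k-1}r\leq |x-y|<2^{-k}r\}$, applying on each annulus H\"older's inequality between $|V|$ and the weight, and invoking the bound $\int_{B_s(y)}|V(x)||x-y|^{p-d}\,\dx\leq \eta_{V,\omega}(s)$. Summing the resulting geometric series in $k$ using the doubling property of $\eta_{V,\omega}$ converts the discrete sum into the continuous integral $\int_0^r \eta_{V,\omega}^{1-\theta}(t)/t\,\dt = \Phi(r)/C(d)$, and combining with the outer integral yields (i).

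For part (ii), cover $\omega$ by a collection of balls $\{B_{r_\delta}(x_i)\}_{i=1}^N$ with $r_\delta:=\Phi^{-1}(\delta/C(d,p,\eta_V))$, bounded overlap, and $N\leq C(\diam(\omega))\,r_\delta^{-d}$, together with a subordinate partition of unity $\{\psi_i\}$ satisfying $|\nabla\psi_i|\leq C/r_\delta$. Applying (i) to the localised functions $\psi_i u\in C_c^\infty(\omega\cap B_{r_\delta}(x_i))$ and summing in $i$ yields
\[
\int_\omega |V||u|^p\,\dx \leq C\Phi(r_\delta)\sum_i \int_\omega |\nabla(\psi_i u)|^p\,\dx,
\]
and expanding $|\nabla(\psi_i u)|^p\leq C(|\nabla u|^p + r_\delta^{-p}|u|^p)$, using the finite-overlap property and $N\leq C r_\delta^{-d}$, turns this into the stated inequality with $C_1/r_\delta^{d+p}$ as the coefficient of $\int_\omega |u|^p\,\dx$, since by construction $C\Phi(r_\delta)=\delta$.

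The main obstacle is the Hedberg-type interpolation in part (i): the exponents in the H\"older split must be chosen so that the dyadic sum over annuli converges to $\int_0^r \eta_{V,\omega}^{1-\theta}(t)/t\,\dt$, matching the $(1-\theta)$-power that appears in the hypothesis guaranteeing $\Phi(\rho)<\infty$. This is precisely where the parameter $\theta = p/(\sigma p'+p)$ enters. Once that bookkeeping is settled, part (ii) is a routine cover-and-cutoff argument.
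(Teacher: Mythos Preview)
The paper does not prove this statement; it is quoted from Ragusa--Zamboni \cite[Corollaries 2.6, 2.7]{ragusa_zamboni} and Zamboni \cite[Theorem 2.1, Corollary 2.2]{zamboni} without argument, so there is no in-paper proof to compare against. Your sketch is exactly the strategy of those cited references: the Riesz-potential representation $|u(x)|\leq C(d)\int |\nabla u(y)|\,|x-y|^{1-d}\,\dy$, a calibrated H\"older split so that one factor carries the kernel $|x-y|^{p-d}$, Fubini, and a dyadic evaluation of the resulting inner integral against the Stummel--Kato modulus, which after summation produces $\int_0^r \eta_{V,\gw}^{1-\theta}(t)/t\,\dt$. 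Part (ii) is indeed the routine cover-and-cutoff localisation you describe (your remark that bounded overlap alone already gives $r_\delta^{-p}$ is correct; the cruder $r_\delta^{-d-p}$ in the stated conclusion simply absorbs the ball count $N\lesssim (\diam(\omega)/r_\delta)^d$ into the constant $C_1(\diam(\omega))$, so either bookkeeping works).
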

Using a standard density argument one can replace in Theorem~\ref{thm_uncrtn} the space $C_c^\infty(\omega)$ with $W_0^{1,p}(\omega)$. Theorem~\ref{thm_uncrtn} is an extension of the Morrey-Adams theorem to the Kato class (see for example \cite{pinchover_psaradakis, giri_pinchover} and references therein, where the potential $V$ is assumed to be in the local Morrey space $M^q_{\loc}(p;\Omega)$). 
Using Theorem~\ref{thm_uncrtn} and the Moser iteration technique, Ragusa and Zamboni \cite{ragusa_zamboni} proved the local boundedness of weak solutions and the validity of Harnack inequality for nonnegative weak solutions of the equation $-\Delta_p u+ V|u|^{p-2}u=0$ for  $V\in \tilde{K}^p_{\loc}(\Omega)$. Hence, as a consequence of the Harnack inequality, the local continuity of weak solutions is obtained (see \cite[Theorem 6.1]{ragusa_zamboni}). Note that if the potential $V$ lies in the Morrey space $M^q_{\loc}(p;\Omega)$ with $1<p<d$, then weak solutions are in fact H\"{o}lder continuous (see Mal\'{y} and Ziemer \cite[Theorem 4.11]{maly_ziemer} and also Di Fazio\cite{Difazio}).  

\medskip

\subsection{Wolff class} In this subsection, we introduce another space of locally integrable functions for $1<p<d$ which is known as the {\it Wolff class}.
\begin{defin}[Wolff class]\label{wolff_cls}{\em 
		Let $1<p<d$. We say that $f\in L^1_{\loc}(\Omega)$ belongs to the {\em Wolff class} $\mathfrak{W}_{\loc}^p(\Gw)$ if for every $\omega\Subset \Gw$ 
		 $$ \lim_{r\rightarrow 0}W_f(r):=\lim_{r\rightarrow 0}\left(\sup_{x\in \omega}\int_0^r\left[\frac{1}{s^{d-p}}\int_{\omega\cap B_s(x)}|f(y)|\d y\right]^{\frac{1}{p-1}} \!\frac{\ds}{s}\right)=0.$$	
	}
\end{defin}
We call $W_f(r)$ {\em the Wolff modulus of} $f$. Note that we should consider $d\geq 2$ when $1<p<2$, and $d\geq 3$ when $p=2$. For $p=2$, by \cite[Lemma 1.27]{maly_ziemer} we have 
$$\int_0^r\frac{1}{s^{d-1}}\left(\int_{\omega\cap B_s(x)}|f(y)|\dy\right) \ds=\frac{1}{(d-2)}\int_{\omega\cap B_r(x)} \frac{|f(y)|}{|x-y|^{d-2}}\dy.$$
Thus,  $\mathfrak{W}^2_{\loc}(\Gw)=K^2_{\loc}(\Gw)$, i.e., for $p=2$ the Wolff space $\mathfrak{W}_{\loc}^p(\Gw)$ is in fact the standard Kato class. The following remark gives a relationship between the Kato class $K^p_{\loc}(\Omega)$ and the Wolff class $\mathfrak{W}^p_{\loc}(\Omega)$ for $p\in(1,d)$.
\begin{remark}\label{remrk1}{\em
	Let $1<p<d$ and $r>0$ be sufficiently small. Then
		\begin{align*}
		W_f(r)&\!=\!\sup_{x\in \omega}\int_0^r\left[\frac{1}{s^{d-p}}\int_{\omega\cap B_s(x)}|f(y)|\d y\right]^{\frac{1}{p-1}} \frac{\d s}{s}\\ 
		&\!\leq \! \sup_{x\in \omega}\!\int_0^r\!\!\left[\!\int_{\omega\cap B_s(x)}\!\!\frac{|f(y)|}{|x\!-\!y|^{d-p}}\!\dy\!\right]^{\frac{1}{p-1}} \!\!\!\frac{\ds}{s}
		 \!\leq\!   \int_0^r \!\frac{ \eta_{f,\gw}^{\frac{1}{p-1}}(s) }{s}\!\ds \!\leq\! \int_0^r\! \frac{\eta_{f,\gw}^{1-\theta}(s)}{s}\!\ds, 
		\end{align*}
	since $$1-\theta =\frac{\sigma}{\sigma+p-1}<\frac{1}{p-1}\,\,\,\,\,\mbox{with } 0<\sigma<1.$$
	%	since $$1-\theta=\frac{\sigma}{\sigma+p-1}<\frac{1}{p-1} ֿ\quad  \mbox{ with } 0<\sigma <1.$$
		This implies that if $f\in K_{\loc}^p(\Omega)$  and  for every $\omega\Subset \Gw$ there exists $\delta>0$ such that $\int_0^\delta \frac{\eta_{f,\gw}^{1-\theta}(s)}{s}\d s <\infty$, then $f$ is in the Wolff class $\mathfrak{W}_{\loc}^p(\Gw)$.
	}
\end{remark}
\begin{lemma}\label{lem_banach}
 For $1<p\leq 2$ and $0<r\leq \diam(\omega)$,  the expression 
$$\|f\|_{\mathfrak{W}^p(\omega)}:= W_f^{p-1}(r)=\left[\sup_{x\in \omega}\int_0^r\left[\frac{1}{s^{d-p}}\int_{\omega\cap B_s(x)}|f(y)|\d y\right]^{\frac{1}{p-1}} \frac{\d s}{s}\right]^{p-1}$$ 
defines a norm on the Wolff class $\mathfrak{W}^p(\omega)$. Moreover,   $(\mathfrak{W}^p(\omega), \|\cdot\|_{\mathfrak{W}^p(\omega)})$ is a Banach space.
\end{lemma}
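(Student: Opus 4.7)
My plan is to recast the expression as a mixed $L^{\infty}$--$L^{q}$ norm with inner exponent $q := 1/(p-1)$ (so $q \geq 1$ precisely because $p \leq 2$), to verify the norm axioms, and then to establish completeness by first deducing an interior $L^{1}$ control. For $f$ in $\mathfrak{W}^{p}(\omega)$ and $x \in \omega$, put
$$H_{x}^{f}(s) := \frac{1}{s^{d-p}}\int_{\omega \cap B_{s}(x)} |f(y)|\,\mathrm{d}y,$$
so that directly from the definition
$$\|f\|_{\mathfrak{W}^{p}(\omega)} = W_{f}(r)^{p-1} = \sup_{x \in \omega}\|H_{x}^{f}\|_{L^{q}((0,r),\, \mathrm{d}s/s)}.$$
Homogeneity is immediate, and if $\|f\|_{\mathfrak{W}^{p}(\omega)} = 0$ then $H_{x}^{f} \equiv 0$ for every $x$, which forces $\int_{\omega \cap B_{s}(x)} |f|\,\mathrm{d}y = 0$ for all $s$ by monotonicity in $s$, hence $f = 0$ a.e.\ by Lebesgue differentiation.

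For the triangle inequality, the pointwise subadditivity $H_{x}^{f+g} \leq H_{x}^{f} + H_{x}^{g}$ and Minkowski's inequality in $L^{q}((0,r),\, \mathrm{d}s/s)$ yield
$$\|H_{x}^{f+g}\|_{L^{q}} \leq \|H_{x}^{f}\|_{L^{q}} + \|H_{x}^{g}\|_{L^{q}} \qquad \text{for each } x,$$
and taking $\sup_{x}$ gives $\|f+g\| \leq \|f\| + \|g\|$. Minkowski requires $q \geq 1$, which is exactly the assumption $p \leq 2$; this is the principal place where the hypothesis enters, and it is the main obstacle of the lemma.

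Next I would establish the auxiliary estimate $\|f\|_{L^{1}(\omega)} \leq C(\omega, r)\,\|f\|_{\mathfrak{W}^{p}(\omega)}$. Fix $x \in \omega$ and set $\phi_{x}(s) := \int_{\omega \cap B_{s}(x)} |f|\,\mathrm{d}y$. Restricting the $s$-integral defining $W_{f}(r)$ to $[r/2, r]$ and using that $\phi_{x}$ is non-decreasing while $s \mapsto s^{-(d-p)/(p-1)}$ is decreasing (since $d > p$), one obtains
$$W_{f}(r) \geq (\log 2)\, r^{-(d-p)/(p-1)}\, \phi_{x}(r/2)^{1/(p-1)},$$
i.e.\ $\phi_{x}(r/2) \leq (\log 2)^{-(p-1)}\, r^{d-p}\, \|f\|_{\mathfrak{W}^{p}(\omega)}$ uniformly in $x \in \omega$. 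Since $\omega \Subset \Omega$ is bounded, covering $\omega$ by finitely many balls $\{B_{r/2}(x_{i})\}_{i=1}^{N}$ with $x_{i} \in \omega$ then produces the claimed $L^{1}$-bound.

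For completeness, let $\{f_{n}\} \subset \mathfrak{W}^{p}(\omega)$ be Cauchy. The $L^{1}$ estimate makes it Cauchy in $L^{1}(\omega)$, so $f_{n} \to f$ in $L^{1}(\omega)$ for some $f \in L^{1}(\omega)$, and along a subsequence $f_{n_{k}} \to f$ a.e. For every fixed $n, x, s$, $H_{x}^{f_{n_{k}} - f_{n}}(s) \to H_{x}^{f - f_{n}}(s)$ by $L^{1}$-continuity of $f \mapsto \int_{\omega \cap B_{s}(x)} |f|\,\mathrm{d}y$. Applying Fatou's lemma to the $L^{q}((0,r),\, \mathrm{d}s/s)$-integral and using that $\|H_{x}^{f_{n_k} - f_n}\|_{L^{q}} \leq \|f_{n_{k}} - f_{n}\|_{\mathfrak{W}^{p}(\omega)}$ for every $x$ yields, after a further $\sup_{x}$,
$$\|f - f_{n}\|_{\mathfrak{W}^{p}(\omega)} \leq \liminf_{k \to \infty}\, \|f_{n_{k}} - f_{n}\|_{\mathfrak{W}^{p}(\omega)},$$
whose right-hand side is arbitrarily small by the Cauchy property. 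This simultaneously shows $f \in \mathfrak{W}^{p}(\omega)$ and $f_{n} \to f$ in norm, completing the proof.
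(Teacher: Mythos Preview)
Your proof is correct. The triangle-inequality argument is exactly the paper's: both recognise that $q:=1/(p-1)\geq 1$ and apply Minkowski in $L^{q}\big((0,r),\,\mathrm{d}s/s\big)$, then take the supremum over $x$.

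For completeness the paper gives no argument and simply cites \cite[Lemma~2.7]{biroli_mosco}, whereas you supply a self-contained proof. Your route---first proving an interior $L^{1}$ bound (this is essentially the content of the paper's Corollary~\ref{cor_2}, obtained here by restricting the $s$-integral to $[r/2,r]$) and then running a Fatou lower-semicontinuity argument on a Cauchy sequence---is the standard one and works cleanly. One small point worth adding: since membership in $\mathfrak{W}^{p}(\omega)$ (Definition~\ref{wolff_cls}) requires the vanishing condition $W_{f}(r')\to 0$ as $r'\to 0$, you should record that the limit $f$ inherits it. This is immediate: for any $r'<r$ the triangle inequality (valid at every scale) gives $W_{f}(r')^{p-1}\leq W_{f-f_{n}}(r')^{p-1}+W_{f_{n}}(r')^{p-1}\leq \|f-f_{n}\|_{\mathfrak{W}^{p}(\omega)}+W_{f_{n}}(r')^{p-1}$, and both terms can be made small.
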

 \begin{proof}
 We will show only the triangle inequality. Since $1<p\leq 2$ implies $\frac{1}{p-1}\geq 1$, the Minkowski inequality implies that  
\begin{align*}
&\|f_1+f_2\|_{\mathfrak{W}^p(\omega)} = \left[\sup_{x\in \omega}\int_0^r\left[\frac{1}{s^{d-p}}\int_{\omega\cap B_s(x)}|f_1(y)+f_2(y)|\d y\right]^{\frac{1}{p-1}} \frac{\d s}{s}\right]^{p-1}\\
&\leq  \sup_{x\in \omega}\left[\int_0^r\left[\frac{1}{s^{d-1}}\int_{\omega\cap B_s(x)}|f_1(y)|\d y+ \frac{1}{s^{d-1}}\int_{\omega\cap B_s(x)}|f_2(y)|\d y\right]^{\frac{1}{p-1}} \d s\right]^{p-1}\\
& \leq \sup_{x\in \omega}\left[\int_0^r\left[\frac{1}{s^{d-1}}\int_{\omega\cap B_s(x)}|f_1(y)|\d y\right]^{\frac{1}{p-1}}\!\!\ds\right]^{p-1} \\
& + \sup_{x\in \omega}\left[\int_0^r\left[\frac{1}{s^{d-1}}\int_{\omega\cap B_s(x)}|f_2(y)|\d y\right]^{\frac{1}{p-1}}\!\!\!\d s\right]^{p-1}= \|f_1\|_{\mathfrak{W}^p(\omega)} + \|f_2\|_{\mathfrak{W}^p(\omega)}.
\end{align*}
Following \cite[Lemma 2.7]{biroli_mosco}, we infer that the Wolff space $\mathfrak{W}^p(\omega)$ with the norm $\|\cdot\|_{\mathfrak{W}^p(\omega)}$ is a Banach space for $1<p\leq 2$. 
 \end{proof}
\begin{lemma}
	For $p> 2$, the expression 
	$$\|f\|_{\mathfrak{W}^p(\omega)}:= W_f^{p-1}(r)=\left[\sup_{x\in \omega}\int_0^r\left[\frac{1}{s^{d-p}}\int_{\omega\cap B_s(x)}|f(y)|\d y\right]^{\frac{1}{p-1}} \frac{\d s}{s}\right]^{p-1}$$ 
	defines a quasinorm on $\mathfrak{W}^p(\omega)$ for every fixed $0<r\leq \diam(\omega)$.
\end{lemma}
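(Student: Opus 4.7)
Nonnegativity and the identity $\|\lambda f\|_{\mathfrak{W}^p(\omega)} = |\lambda|\,\|f\|_{\mathfrak{W}^p(\omega)}$ for $\lambda\in\R$ follow immediately from the defining formula, because the factor $|\lambda|$ can be pulled out of the inner integral and then the outer operations are homogeneous of degree one in $|\lambda|$ (the exponents $\frac{1}{p-1}$ and $p-1$ cancel in the end). Definiteness (that $\|f\|_{\mathfrak{W}^p(\omega)}=0$ forces $f=0$ a.e.\ in $\omega$) is obtained by observing that if the supremum in $x\in\omega$ of the radial integral vanishes, then for a.e.\ $x\in\omega$ and a.e.\ $s\in(0,r)$ we must have $\int_{\omega\cap B_s(x)}|f(y)|\d y = 0$, and letting $s\to\diam(\omega)$ with $x$ ranging over a dense countable subset yields $f=0$ a.e.

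The main task is therefore the quasi-triangle inequality. The plan is to combine two elementary subadditivity/superadditivity facts for power functions, switching roles at the two nested exponents. Since $p>2$, we have $\frac{1}{p-1}<1$, so the subadditivity inequality $(a+b)^{q}\le a^{q}+b^{q}$ valid for $0<q\le 1$ applies with $q=\frac{1}{p-1}$. Hence, for every $x\in\omega$ and every $s\in(0,r)$,
\begin{equation*}
\left[\frac{1}{s^{d-p}}\int_{\omega\cap B_s(x)}|f_1+f_2|\d y\right]^{\frac{1}{p-1}} \le \left[\frac{1}{s^{d-p}}\int_{\omega\cap B_s(x)}|f_1|\d y\right]^{\frac{1}{p-1}} + \left[\frac{1}{s^{d-p}}\int_{\omega\cap B_s(x)}|f_2|\d y\right]^{\frac{1}{p-1}}.
\end{equation*}
Integrating in $\ds/s$ over $(0,r)$, taking the supremum in $x\in\omega$, and using that $\sup$ is subadditive, I obtain $W_{f_1+f_2}(r)\le W_{f_1}(r)+W_{f_2}(r)$.

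To pass to the $(p-1)$-th power I invoke the standard convexity inequality $(a+b)^{q}\le 2^{q-1}(a^{q}+b^{q})$ valid for $q\ge 1$, applied with $q=p-1>1$. This yields
\begin{equation*}
\|f_1+f_2\|_{\mathfrak{W}^p(\omega)} = W_{f_1+f_2}^{p-1}(r) \le \bigl(W_{f_1}(r)+W_{f_2}(r)\bigr)^{p-1} \le 2^{p-2}\bigl(\|f_1\|_{\mathfrak{W}^p(\omega)} + \|f_2\|_{\mathfrak{W}^p(\omega)}\bigr),
\end{equation*}
which is the quasi-triangle inequality with constant $K=2^{p-2}\ge 1$. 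The point requiring care is the direction of the two power inequalities: exactly because $p>2$ the inner exponent $\frac{1}{p-1}$ is subadditive while the outer exponent $p-1$ is only $2^{p-2}$-subadditive, and it is this mismatch that prevents an honest norm (as in Lemma~\ref{lem_banach}) and forces the quasinorm constant. No further structural assumption on $f_1,f_2\in\mathfrak{W}^p(\omega)$ is needed, so this completes the plan.
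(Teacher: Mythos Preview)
Your proof is correct and follows essentially the same route as the paper: apply the subadditivity inequality $(a+b)^{\lambda}\le a^{\lambda}+b^{\lambda}$ for $\lambda=\tfrac{1}{p-1}\in(0,1)$ at the inner exponent, then the convexity inequality $(a+b)^{\lambda}\le 2^{\lambda-1}(a^{\lambda}+b^{\lambda})$ for $\lambda=p-1>1$ at the outer exponent, arriving at the same quasi-triangle constant $2^{p-2}$. The only minor quibble is in your definiteness sketch, where you write ``letting $s\to\diam(\omega)$'' even though $s$ ranges only in $(0,r)$; simply cover $\omega$ by balls of radius less than $r$ instead.
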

\begin{proof}
If $p>2$, then $0<\frac{1}{p-1}<1$. Using the same argument as in the proof of Lemma~\ref{lem_banach},  we obtain 
\begin{equation}
\|f_1+f_2\|_{\mathfrak{W}^p(\omega)}\leq 2^{p-2}\bigg(\|f_1\|_{\mathfrak{W}_p(\omega)} + \|f_2\|_{\mathfrak{W}^p(\omega)}\bigg),\label{eqn_1}
\end{equation}
where we use the following two inequalities with $a,b>0$: $(a+b)^\lambda\leq a^\lambda + b^\lambda\,\,\,  \mbox{for }\, 0<\lambda <1$ and
$(a+b)^\lambda\leq 2^{\lambda-1}(a^\lambda + b^\lambda)\,\,\,  \mbox{for }\, \lambda >1$. 
\end{proof}
The following  crucial Morrey-Adams type estimates for  potentials in the Wolff class $\mathfrak{W}_{\loc}^p(\Gw)$ extend  Theorem~\ref{thm_uncrtn}, and will be used in our study. 
\begin{theo}\label{morrey_adams}
	Let $1<p<d$, $\omega \Subset \Gw$, and $V\in \mathfrak{W}^p(\omega)$. 
	\begin{itemize}
		\item[(i).] For any $\epsilon>0$ there exist $0<r_0<1$ and $\tau_\epsilon>0$ (depending on $\epsilon, d, p$) such that  if $B_{4r_0}(x_0) \subset \omega$ and $W_V(r_0)<\tau_\epsilon$, then 
		$$\int_{B_r(x_0)}|V||u|^p \dx \leq \epsilon \int_{B_r(x_0)} |\nabla u|^p \dx \qquad  \forall r\leq r_0 \mbox{ and }	 u\in W_0^{1,p}(B_r(x_0)).$$
		\item[(ii).]  Let $r>0$ and $B_{2r}(x_0)\subset \omega$.  Then there exists a constant $C>0$ depending on $d,p$ such that 
		$$\int_{B_r(x_0)}|V||u|^p \dx \leq C W^{p-1}_V(2r) \int_{B_r(x_0)} |\nabla u|^p \dx \qquad \forall u\in W_0^{1,p}(B_r(x_0)).$$
	\end{itemize}
\end{theo}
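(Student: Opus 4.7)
My plan is to establish (ii) first and deduce (i) as a corollary.

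For (ii), by density I may take $u\in C_c^\infty(B_r(x_0))$ extended by zero outside $B_r(x_0)$. Applying Cavalieri's principle,
$$\int_{B_r(x_0)}|V|\,|u|^p\dx \;=\; p\int_0^\infty t^{p-1}\int_{\{|u|>t\}}|V(x)|\dx\dt,$$
reduces the estimate to a capacitary bound on the superlevel sets. The crucial ingredient is a Wolff--Maz'ya inequality: for every compact $E\subset B_r(x_0)$,
$$\int_E|V|\dx \;\leq\; C(d,p)\left[\sup_{y\in E}\int_0^{2r}\left(\frac{1}{s^{d-p}}\int_{B_s(y)}|V|\dz\right)^{\!\frac{1}{p-1}}\frac{\ds}{s}\right]^{p-1}\ca_p(E;B_{2r}(x_0)),$$
where $\ca_p(\cdot;B_{2r}(x_0))$ is the standard relative $p$-capacity. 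The inner supremum is bounded by $W_V(2r)$ from Definition~\ref{wolff_cls}. Combined with Maz'ya's strong-type capacitary inequality,
$$p\int_0^\infty t^{p-1}\ca_p(\{|u|>t\};B_{2r}(x_0))\dt \;\leq\; C(d,p)\int_{B_r(x_0)}|\nabla u|^p\dx,$$
this yields (ii). The Wolff--Maz'ya bound itself rests on the Kilpel\"ainen--Mal\'y pointwise estimate controlling the $p$-equilibrium potential of $E$ from below by the truncated Wolff potential of its equilibrium measure.

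For (i), given $\epsilon>0$, let $C_0=C_0(d,p)$ be the constant from (ii) and set $\tau_\epsilon:=(\epsilon/(C_0 C_1^{p-1}))^{1/(p-1)}$, where $C_1=C_1(d,p)$ is the doubling constant in the estimate $W_V(2\rho)\leq C_1\, W_V(\rho)$ valid in the relevant small-scale regime (obtained by covering each ball $B_{2s}(y)$ by a bounded number of balls $B_s(y_i)$ and invoking subadditivity inside the Wolff modulus). Fix any $r_0\in(0,1)$. For $r\leq r_0$, the monotonicity of $W_V$ and the doubling bound give $W_V^{p-1}(2r)\leq C_1^{p-1}W_V^{p-1}(r_0)<\epsilon/C_0$. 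Since $B_{2r}(x_0)\subset B_{4r_0}(x_0)\subset\omega$, applying (ii) yields the conclusion.

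\textbf{Main obstacle.} The core difficulty is the Wolff--Maz'ya capacitary estimate. For $p=2$ it follows from linear potential theory via the Riesz representation, but for $p\neq 2$ the equilibrium potential is defined by a nonlinear variational problem, and one must invoke Kilpel\"ainen--Mal\'y's two-sided pointwise bounds for $p$-superharmonic functions. A direct real-variable approach through Riesz potential representations tends to yield only Sobolev-type estimates (Fefferman--Phong / Hedberg), which are strictly weaker than the Wolff-class sharp bound required here.
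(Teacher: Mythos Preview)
The paper does not actually prove this theorem; it simply refers the reader to \cite[Lemma~2.1]{LS} and \cite[Lemma~2.1]{Skrypnik}. Your capacitary strategy for (ii)---layer-cake, the bound $\int_E|V|\dx\le C\,W_V(2r)^{p-1}\ca_p(E;B_{2r}(x_0))$ for compact $E\subset B_r(x_0)$, then Maz'ya's strong capacitary inequality---is a valid and self-contained route, though probably not the one in those references (which work more directly with the pointwise estimate $|u|\le C\,I_1(|\nabla u|)$ and Hedberg--Wolff type arguments).

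One genuine correction, however. The capacitary bound does \emph{not} follow from the Kilpel\"ainen--Mal\'y \emph{lower} estimate for the equilibrium potential $u_E$. Knowing that $W_{1,p}^{\mu_E}(y)\le C$ on $E$ (where $\mu_E$ is the equilibrium measure) tells you nothing about $\int_E|V|\dx$: for $p\neq 2$ the Wolff potential is nonlinear and there is no Fubini identity swapping $\mu_E$ and $|V|\dx$. The argument that works runs the other way. Let $v$ solve $-\Delta_p v=|V|\1_E\dx$ in $B_{2r}(x_0)$ with zero boundary data; the Kilpel\"ainen--Mal\'y \emph{upper} estimate gives $v\le C\,W_V(2r)$ on $B_r(x_0)$, hence $\int|\nabla v|^p\dx=\int v\,|V|\1_E\dx\le C\,W_V(2r)\int_E|V|\dx$. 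Testing the equation against the capacitary potential $u_E$ (which equals $1$ on $E$) and applying H\"older gives
\[
\int_E|V|\dx\le\int u_E\,|V|\1_E\dx=\int|\nabla v|^{p-2}\nabla v\cdot\nabla u_E\dx\le\Bigl(\int|\nabla v|^p\dx\Bigr)^{1/p'}\ca_p(E)^{1/p},
\]
which rearranges to $\int_E|V|\dx\le C\,W_V(2r)^{p-1}\ca_p(E)$. So the correct KM input is the upper bound for the nonlinear potential of $|V|\1_E\dx$, not the lower bound for $u_E$.

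Your reduction of (i) to (ii) via a doubling estimate $W_V(2\rho)\le C_1\,W_V(\rho)$ is right in outcome, but the one-line covering justification is too quick: if you cover $B_{2s}(y)$ by balls $B_s(y_i)$, the centers $y_i$ move with the scale $s$ inside the $s$-integral, and you cannot just pull out a supremum. A clean way around this: from the trivial lower bound $\int_{\rho/2}^{\rho}(\cdot)^{1/(p-1)}\ds/s\ge(\ln 2)\bigl(\rho^{p-d}\int_{B_{\rho/2}(z)}|V|\bigr)^{1/(p-1)}$ one gets $\int_{B_{\rho/2}(z)}|V|\dx\le C\rho^{d-p}W_V(\rho)^{p-1}$ for every admissible center $z$ (this is essentially Corollary~\ref{cor_2}). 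Covering $B_{2r}(x)$ by $N(d)$ balls of radius $r/2$ with centers in $B_{4r_0}(x_0)\subset\omega$ then bounds the tail $\int_r^{2r}(\cdot)\ds/s$ by $C(d,p)\,W_V(r)$, giving the doubling.
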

\begin{proof}
	See \cite[Lemma 2.1]{LS} and \cite[Lemma 2.1]{Skrypnik}.
\end{proof}
As a consequence of Theorem \ref{morrey_adams} (ii), we obtain the following two Morrey-Adams type theorems in $\gw \Subset \Gw$.
\begin{theo}\label{cor_1}
	Let $1<p<d$ and $V\in \mathfrak{W}^p_{\loc}(\Gw)$. Then for any $\omega'\Subset\omega\Subset \Gw$ 
	and $\gd \! > \! 0$, there exists a positive constant $C_\gd \!  =  \! C(d, p, \delta, \omega', \omega, W_V^p(\diam(\omega)))$ such  that
	\begin{equation*}\label{eqn1}
	\int_{\omega'}|V||u|^p\dx\leq \delta \|\nabla u\|^p_{L^p(\omega', \mathbb{R}^d)} + C_\gd\| u\|^p_{L^p(\omega')}  \qquad \forall u\in W_0^{1,p}(\omega').
	\end{equation*}
\end{theo}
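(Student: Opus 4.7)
The plan is to localize the estimate in Theorem~\ref{morrey_adams}(ii) via a covering argument, and then exploit the fact that the Wolff modulus $W_V(r)\to 0$ as $r\to 0$ to absorb the small parameter $\delta$.

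First I would fix a radius $r\in(0,r_*)$ with $r_*:=\tfrac14\mathrm{dist}(\omega',\partial\omega)>0$, so that $B_{2r}(x)\subset\omega$ for every $x\in\omega'$. Cover the compact set $\overline{\omega'}$ by finitely many balls $\{B_{r/2}(x_i)\}_{i=1}^{N(r)}$ with $x_i\in\overline{\omega'}$, chosen with bounded overlap $N_0=N_0(d)$ (e.g.\ by a Vitali-type selection). For each $i$ pick a cutoff $\eta_i\in C_c^\infty(B_r(x_i))$ with $0\le\eta_i\le 1$, $\eta_i\equiv 1$ on $B_{r/2}(x_i)$ and $|\nabla\eta_i|\le C(d)/r$. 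Given $u\in W_0^{1,p}(\omega')$ (extended by zero to $\mathbb{R}^d$), the function $\eta_i u$ belongs to $W_0^{1,p}(B_r(x_i))$, which is exactly the space in which Theorem~\ref{morrey_adams}(ii) is formulated.

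Next I would chain the inequalities. Since the balls $B_{r/2}(x_i)$ cover $\omega'$ and $\eta_i\equiv 1$ on $B_{r/2}(x_i)$,
\begin{align*}
\int_{\omega'}|V||u|^p\dx
&\le \sum_{i=1}^{N(r)}\int_{B_{r/2}(x_i)}|V||u|^p\dx
 \le \sum_{i=1}^{N(r)}\int_{B_{r}(x_i)}|V||\eta_i u|^p\dx.
\end{align*}
Applying Theorem~\ref{morrey_adams}(ii) on each $B_r(x_i)$ and using $|\nabla(\eta_i u)|^p\le 2^{p-1}\bigl(|\nabla u|^p+(C/r)^p|u|^p\bigr)$ together with the bounded-overlap property (so that $\sum_i\int_{B_r(x_i)}\!f\le N_0\int_{\cup B_r(x_i)}\!f$ for $f\ge 0$, and the union meets $\omega$ where $u$ is supported in $\omega'$), one obtains
\begin{equation*}
\int_{\omega'}|V||u|^p\dx \le C_1\,W_V^{p-1}(2r)\!\int_{\omega'}|\nabla u|^p\dx+C_1\,W_V^{p-1}(2r)\,r^{-p}\!\int_{\omega'}|u|^p\dx,
\end{equation*}
with $C_1=C_1(d,p)$.

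Finally, I would invoke the defining property $\lim_{r\to 0}W_V(r)=0$ of the Wolff class: given $\delta>0$, choose $r=r(\delta,d,p,\omega',\omega,V)\in(0,r_*)$ small enough that $C_1\,W_V^{p-1}(2r)\le\delta$. This yields exactly the claimed inequality with $C_\delta:=\delta\,r^{-p}$, which depends on the asserted parameters (through the selected radius $r$, which in turn depends on the Wolff modulus $W_V$ on $\omega$ and on $\delta,d,p,\omega',\omega$).

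The only mildly delicate point is the bookkeeping for the constants: the number of balls $N(r)$ and the cutoff bound $C/r$ both depend on $r$, and $r$ is chosen in terms of $\delta$ and the Wolff modulus, so one must verify that the final constant $C_\delta$ indeed depends only on the data listed in the statement, which is clear from the construction above. No Banach-space structure on $\mathfrak{W}^p(\omega)$ is used, so the argument goes through uniformly for all $1<p<d$.
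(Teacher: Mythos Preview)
Your argument is correct and is essentially the approach the paper indicates: localize via a finite covering of $\overline{\omega'}$ by small balls, apply Theorem~\ref{morrey_adams}(ii) on each ball with cutoff, sum using bounded overlap, and then exploit $W_V(r)\to 0$ to absorb the gradient coefficient into $\delta$. The only cosmetic difference is that the paper phrases this as ``a partition of unity of $\omega'$'' while you use individual cutoffs $\eta_i$ and a direct covering estimate; the content is the same.
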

\begin{proof}
	The proof follows straight forwardly from Theorem~\ref{morrey_adams} by using a partition of unity of $\omega'$ (see \cite[Corollary 2.7.]{ragusa_zamboni} for a similar proof).
\end{proof}
\begin{theo}[Morrey-Adams type theorem]\label{cor_3}
	Let $1<p<d$ and $V\in \mathfrak{W}^p_{\loc}(\Gw)$. Then for any $\omega'\!\Subset\! \omega \!\Subset\! \tilde{\omega}\Subset \Gw$ with $\partial \omega'$ being Lipschitz and $\delta\!>\!0$, there exists $C_\gd \!=\! C(d, p, \delta, \omega', \omega, \tilde{\omega}, W_V(\diam(\tilde{\omega})))>0$ such that
	\begin{equation}\label{eqn2}
	\int_{\omega'}|V||u|^p\dx\leq \delta \|\nabla u\|^p_{L^p(\omega', \mathbb{R}^d)} + C_\gd\|u\|^p_{L^p(\omega')} \qquad \forall u\in W^{1,p}(\omega') 
	\end{equation}
\end{theo}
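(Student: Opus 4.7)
The plan is to reduce Theorem~\ref{cor_3} to Theorem~\ref{cor_1} by composing a Sobolev extension with a smooth cutoff. Since $\omega'\Subset\Omega$ is a bounded Lipschitz domain, Stein's total extension operator provides a bounded linear map $E:W^{1,p}(\omega')\to W^{1,p}(\R^d)$ with $(Eu)|_{\omega'}=u$ and, crucially, the two separate bounds
$$
\|Eu\|_{L^p(\R^d)}\leq C_E\|u\|_{L^p(\omega')},\qquad \|\nabla Eu\|_{L^p(\R^d)}\leq C_E\|u\|_{W^{1,p}(\omega')}.
$$
Using $\omega'\Subset\omega$, I would fix a cutoff $\eta\in C_c^\infty(\omega)$ with $0\leq\eta\leq 1$ and $\eta\equiv 1$ on an open neighborhood of $\overline{\omega'}$, and then set $v:=\eta\, Eu$. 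By construction $v\in W_0^{1,p}(\omega)$, with $v=u$ and $\nabla v=\nabla u$ a.e.\ on $\omega'$, while
$$
\|v\|_{L^p(\omega)}\leq C_E\|u\|_{L^p(\omega')},\qquad \|\nabla v\|^p_{L^p(\omega\setminus\omega')}\leq C_1\bigl(\|\nabla u\|^p_{L^p(\omega')}+\|u\|^p_{L^p(\omega')}\bigr),
$$
where $C_1$ depends only on $\omega',\omega,\tilde\omega$ and $\|\nabla\eta\|_\infty$.

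Next, I would apply Theorem~\ref{cor_1} to $v$ on the pair $\omega\Subset\tilde\omega$ with a parameter $\delta_1>0$ to be chosen. Splitting $\omega=\omega'\cup(\omega\setminus\omega')$ and inserting the bounds above yields
$$
\int_{\omega'}|V||u|^p\dx\leq \int_\omega|V||v|^p\dx\leq \delta_1(1+C_1)\|\nabla u\|^p_{L^p(\omega')}+\bigl(\delta_1 C_1+C_{\delta_1}C_E^p\bigr)\|u\|^p_{L^p(\omega')}.
$$
Choosing $\delta_1:=\delta/(1+C_1)$ and defining $C_\delta:=\delta_1 C_1+C_{\delta_1}C_E^p$ would yield the asserted estimate \eqref{eqn2}, with the stated dependence on $d,p,\omega',\omega,\tilde\omega$ and $W_V(\diam(\tilde\omega))$ inherited from Theorem~\ref{cor_1}.

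The main obstacle is the quantitative dependence of the cross-terms produced by the cutoff: the large constant $C_{\delta_1}$ coming from Theorem~\ref{cor_1} necessarily blows up as $\delta_1\to 0$, so one must ensure it multiplies \emph{only} the zero-order term on the right-hand side. This is precisely why the extension must be bounded separately $L^p(\omega')\to L^p(\R^d)$ (not merely $W^{1,p}\to W^{1,p}$), so that $\|v\|_{L^p(\omega)}$ can be estimated by $\|u\|_{L^p(\omega')}$ without any gradient contamination. Once this $L^p$-stability of the extension is in hand, the rest of the argument is a routine application of Theorem~\ref{cor_1}, essentially as in \cite[Corollary~2.7]{ragusa_zamboni}.
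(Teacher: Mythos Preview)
Your argument is correct and follows essentially the same route as the paper: extend $u$ from $\omega'$ to a function in $W_0^{1,p}(\omega)$ and then invoke Theorem~\ref{cor_1} on the pair $\omega\Subset\tilde\omega$. In fact your bookkeeping is slightly more honest than the paper's, since an extension $E:W^{1,p}(\omega')\to W_0^{1,p}(\omega)$ cannot literally satisfy the homogeneous gradient bound $\|\nabla Eu\|_{L^p(\omega)}\leq C\|\nabla u\|_{L^p(\omega')}$ (consider $u\equiv 1$); your splitting of $\omega$ into $\omega'$ and $\omega\setminus\omega'$ and your observation that only the $L^p$-stability of the extension matters for the $C_{\delta_1}$-term is exactly the right fix.
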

\begin{proof}
	To prove \eqref{eqn2}, we use an extension operator  $E: W^{1,p}(\omega')\rightarrow W_0^{1,p}(\omega)$ such that for any $u\in W^{1,p}(\omega')$ 
	\begin{equation}\label{eqn_2}
	\begin{cases}
	      E(u)=u \quad \mbox{in } \omega'  ,\\
	        \|E(u)\|_{L^p(\omega)} \leq  C(d, p, \omega', \omega) \|u\|_{L^p(\omega')},\\
	        \|\nabla E(u)\|_{L^p(\omega,\mathbb{R}^d)} \leq C(d, p, \omega', \omega) \|\nabla u\|_{L^p(\omega',\mathbb{R}^d)}.
	\end{cases}
	\end{equation}
	Then applying Theorem~\ref{cor_1} in $\gw$, we have
	\begin{align*}
	&\int_{\omega'}|V||u|^p\dx\leq  \int_{\omega}|V||E(u)|^p\dx\\[2mm]
	& \leq  \delta \|\nabla E(u)\|^p_{L^p(\omega, \mathbb{R}^d)} + C(d, p, \delta, \omega, \tilde{\omega}, W_V(\diam(\tilde{\omega})))\|E(u)\|^p_{L^p(\omega)}.
	\end{align*}
	Therefore, by using \eqref{eqn_2} in the latter term, \eqref{eqn2} follows.
\end{proof}
We conclude the present section by the following corollary which demonstrates that the Wolff class  $\mathfrak{W}^p_{\loc}(\Gw)$ is continuously embedded in $L^1_{\loc}(\Gw)$.
\begin{corollary}\label{cor_2}
	Let $1<p<d$ and $V\in \mathfrak{W}^p_{\loc}(\Gw)$. Suppose $x_0 \in \Gw$ and $B_{2r}(x_0)\Subset \Gw$. Then there exists $C=(C(d,p))$ such that
	\begin{equation*}
	\int_{B_{r/2}(x_0)}|V|\dx\leq C r^{d-p} W_V^{p-1}(2r).
	\end{equation*}
\end{corollary}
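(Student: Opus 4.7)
The plan is to extract the desired pointwise bound on $\int_{B_{r/2}(x_0)}|V|\dx$ by testing the supremum in the definition of the Wolff modulus $W_V(2r)$ at the single point $x=x_0$, and restricting the outer $s$-integral to a convenient sub-interval where everything can be made explicit. Concretely, fix an intermediate $\omega$ with $B_{2r}(x_0)\Subset \omega \Subset \Gw$ so that the Wolff modulus $W_V$ from Definition~\ref{wolff_cls} is well defined, and set $I := \int_{B_{r/2}(x_0)}|V(y)|\dy$.

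The key monotonicity observations are: first, for every $s \in [r, 2r]$ we have $B_{r/2}(x_0) \subset B_s(x_0)\subset \omega$, so
$$\int_{\omega \cap B_s(x_0)} |V(y)|\dy \;\geq\; I;$$
second, since $d-p>0$, the factor $s^{-(d-p)}$ is decreasing in $s$, so $s^{-(d-p)} \geq (2r)^{-(d-p)}$ on $[r,2r]$. Plugging $x=x_0$ into the sup and integrating only over $s\in[r,2r]$ therefore gives
$$W_V(2r) \;\geq\; \int_r^{2r}\left[\frac{I}{(2r)^{d-p}}\right]^{\frac{1}{p-1}}\frac{\ds}{s} \;=\; (\ln 2)\left[\frac{I}{(2r)^{d-p}}\right]^{\frac{1}{p-1}}.$$

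Raising both sides to the power $p-1$ and rearranging yields
$$I \;\leq\; \frac{2^{d-p}}{(\ln 2)^{p-1}}\, r^{d-p}\, W_V^{p-1}(2r),$$
which is the desired inequality with the explicit constant $C(d,p) = 2^{d-p}/(\ln 2)^{p-1}$. There is no real obstacle here: the only small point to be careful about is that the Wolff modulus in Definition~\ref{wolff_cls} is defined relative to some $\omega \Subset \Gw$, but since any such $\omega$ containing $B_{2r}(x_0)$ works and the inner integrand is monotone in the domain of integration, the same constant is obtained uniformly. The argument uses nothing beyond the very definition of $\mathfrak{W}^p_{\loc}(\Gw)$ and the hypothesis $d>p$, and it shows in particular that $V\in L^1_{\loc}(\Gw)$, confirming the continuous embedding claimed in the corollary.
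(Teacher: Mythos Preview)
Your proof is correct, and it takes a genuinely different, more elementary route than the paper. The paper deduces the bound from Theorem~\ref{morrey_adams}(ii) by plugging in a cutoff $u\in C_c^\infty(B_r(x_0))$ with $u\equiv 1$ on $B_{r/2}(x_0)$ and $|\nabla u|\leq C/r$, so that
\[
\int_{B_{r/2}(x_0)}|V|\dx \leq \int_{B_r(x_0)}|V||u|^p\dx \leq C\,W_V^{p-1}(2r)\int_{B_r(x_0)}|\nabla u|^p\dx \leq C\,r^{d-p}W_V^{p-1}(2r).
\]
This is quick but relies on the nontrivial Morrey--Adams estimate (quoted from \cite{LS,Skrypnik}). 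Your argument instead reads the inequality directly out of Definition~\ref{wolff_cls}: evaluate the supremum at $x_0$, restrict the outer integral to $s\in[r,2r]$, and use $B_{r/2}(x_0)\subset B_s(x_0)\subset\omega$ together with $s^{-(d-p)}\geq(2r)^{-(d-p)}$. The payoff is a self-contained proof with an explicit constant $C(d,p)=2^{d-p}/(\ln 2)^{p-1}$, and it makes transparent that the $L^1_{\loc}$ embedding is a trivial consequence of the definition rather than of the deeper Theorem~\ref{morrey_adams}. The paper's approach, on the other hand, illustrates how the corollary fits into the Morrey--Adams framework that is used repeatedly later.
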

\begin{proof}
	The proof is immediate by choosing $u\in C_c^\infty(B_{2r}(x_0))$ with $\supp{u}\subset B_{r}(x_0)$, $0\leq u\leq 1$ in $B_r(x_0)$, $u=1$ in $B_{r/2}(x_0)$ and $|\nabla u|<C/r$, in part (ii) of Theorem~\ref{morrey_adams}. 
\end{proof}

\section{Criticality theory for $Q_{p,A,V}$ with potential in Wolff class}\label{pde_wolff}
The present section is devoted to criticality theory for weak positive solutions of the equation \eqref{plaplace} with potentials in the local Wolff class  $\mathfrak{W}^p_{\loc}(\Omega)$. 
\subsection{The setting} 
Let $\Gw \subseteq \R^d$ be a domain, and fix  $1<p<d$. Throughout the paper we assume that the operator $Q=Q_{p,A,V}$ satisfies the following local regularity assumptions.  
\begin{assumptions} \label{assump} {\em 
		%\begin{leftbrace}
		\begin{itemize}
			\item[{\ }]		
			\item $A \!=\! (a_{ij})_{i,j=1}^{d} \!\in\! L_{\rm loc}^{\infty}(\Gw;\R^{d\times d})$ is a symmetric matrix valued function.
			 \item $A$ is locally uniformly elliptic  in $\Gw$, that is, for any  $\gw\Subset \Gw$ there exists  $\Theta_{\gw}>0$ such that for all $\xi\in \mathbb{R}^d$ and $  x\in \gw$
			\begin{eqnarray*} 
			\hspace*{1.5cm}(\Theta_{\gw})^{-1}\sum_{i=1}^d\xi_i^2
			\leq |\xi |_{A(x)}^2:=\sum_{i,j=1}^d
			a_{ij}(x)\xi_i\xi_j
			\leq \Theta_{\gw}\sum_{i=1}^d\xi_i^2 .
			\end{eqnarray*}
			\item  $V\in\mathfrak{W}_{\loc}^p(\Gw)$ is a real valued function.
		\end{itemize} }
	%\end{leftbrace}
\end{assumptions}
By Assumptions~\ref{assump}, the function $\xi\mapsto |\xi|_{A(x)}^p$ is {\em strictly convex} a.e. $x\in \Omega$. At some points, we assume that $ |\xi|_A^p$ is locally strong convexity.
\begin{assumption}\label{assump_strong}
{\em	We say that $|\xi|_A^p$ is a {\em locally strong convex function} with respect to $\xi\in \mathbb{R}^d$ if there exists $\bar{p}\geq p$ such that for every $\omega\Subset \Omega$ there exists a positive constant $C_\omega(\bar{p})$ such that 
	\begin{equation*}\label{eq_pbar}
	|\xi|_A^p-|\eta|_A^p-p|\eta|_A^{p-2}A(x)\eta\cdot (\xi-\eta)\geq C_\omega(\bar{p})|\xi-\eta|_A^{\bar{p}}\quad \forall \,\xi, \eta \in \mathbb{R}^d \mbox{ and a.e. } x\in \omega.
	\end{equation*}
}
\end{assumption}
	In fact, by \cite[Lemma 3.4]{PR} and \cite[Lemma 2.2]{PTT}, there exists a constant  $C(p)>0$ such that for  all $\xi,\eta\in\mathbb{R}^{n}$ ($\eta\neq 0$ if~$p<2$) and a.e.~$x\in\omega$,
\begin{equation}\label{eq:strong-conv}
|\xi|^{p}_{A}-|\eta|^{p}_{A}-p|\eta|^{p-2}_{A}A(x)\eta\cdot(\xi-\eta)
\geq C(p)[\xi,\eta]_{p,A},
\end{equation}
where
\[[\xi,\eta]_{p,A}\triangleq\begin{cases}
|\xi-\eta|_{A}^{p}&\text{if~$p\geq 2$},\\
|\xi-\eta|_{A}^{2}(|\xi|_{A}+|\eta|_{A})^{p-2}& \text{if~$1<p<2$}.
\end{cases}\]
\begin{definition}{\em 
A function $v$ is said to be a {\em (weak) solution} of the equation $Q_{p,A,V}(u)=0$ in $\Gw$  if  $v\in W_{\loc}^{1,p}(\Omega)$ and $v$ satisfies  
\begin{equation}\label{eq-ws}
\int_\Omega(|\nabla v|^{p-2}_A A\nabla v\cdot \nabla \varphi+ V|v|^{p-2}v\varphi)\dx=0\qquad \forall \varphi\in C_c^\infty(\Omega).
\end{equation}
	
 We say the $v\in W_{\loc}^{1,p}(\Omega)$ is a {\em supersolution} of  \eqref{plaplace} in $\Gw$, if the integral in \eqref{eq-ws} is nonnegative for every nonnegative test function $\varphi\in \C_c^\infty(\Omega)$. A function $v$ is a {\em subsolution} of  \eqref{plaplace} if $-v$ is supersolution of \eqref{plaplace}. }
\end{definition}
 Theorem~\ref{morrey_adams} implies that the integral in  \eqref{eq-ws} is well defined.  Note that \eqref{plaplace} is the Euler-Lagrange equation associated with the energy functional 
\begin{equation*}\label{euler-lagrange eq}
	\mathcal{Q}_{p,A,V}(\vgf)= \mathcal{Q}_{p,A,V}(\vgf; \Gw):=\int_\Omega (|\nabla \vgf|^p_A+V|\vgf|^p)\dx \qquad\vgf\in C_c^\infty(\Omega).
\end{equation*}
Throughout the paper we write  $\mathcal{Q}_{p,A,V}\geq 0$ in $\Gw$ if  $\mathcal{Q}_{p,A,V}(\vgf)\geq 0$  for all $\vgf\in C_c^\infty(\Omega)$.  
\subsection{Harnack convergence principle}
In this subsection we prove the Harnack convergence principle. First we recall the local Harnack inequality for nonnegative weak solutions of  the equation $Q_{p,A,V}(u)=0$ for $1<p<d$ proved in\cite[Theorem 2.2]{biroli_1}. We also refer to \cite[Theorem 2.2]{skrypnik_1}, where the local Harnack inequality of nonnegative solutions is proved for certain quasilinear equations with coefficients in Wolff class. The Harnack inequality of positive solutions is further extended in \cite[Theorem 1.2]{LS_1} for a certain class of divergence-type elliptic equations with nonstandard growth conditions and coefficients from appropriate Wolff classes.
%%%%%%%%
\begin{thm}[Local Harnack inequality]\label{thm_hrnck}
	Let $1<p<d$ and $\omega'\Subset\omega\Subset \Gw$. Then under Assumptions~\ref{assump}, any nonnegative solution $v$ of the equation $Q_{p,A,V}(u)=0$ in $\gw$ satisfies  
	\begin{equation*}\label{h_ineq}
	\sup_{\omega'} v \leq C\inf_{\omega'} v,
	\end{equation*}
	where $C$ is a positive constant depending only $d, p, \Theta_\omega$,   $W_V(\diam(\omega))$ and $\dist{\omega'}{\omega} $, but not on $v$.
\end{thm}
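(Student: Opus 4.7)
The plan is to establish Theorem~\ref{thm_hrnck} by Moser's iteration adapted to the Wolff-class potential, with Theorem~\ref{morrey_adams} supplying the absorption device that compensates for the lack of H\"older continuity available in the Morrey setting.

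First I would fix concentric balls $B_{r/2}(x_0)\subset B_r(x_0)\Subset \omega$ and work with $\bar v := v+\delta$ for $\delta>0$, sending $\delta\to 0^+$ at the end. For a smooth cutoff $\eta\in C_c^\infty(B_r(x_0))$ and exponents $\beta$ in appropriate ranges, I would test the weak formulation \eqref{eq-ws} with $\varphi=\eta^p\bar v^{\beta}$. Using Young's inequality on the cross term together with the local ellipticity of $A$, this yields a Caccioppoli-type estimate
\begin{equation*}
\int_{B_r(x_0)} \eta^p \bar v^{\beta-p}|\nabla \bar v|^p\dx \leq C\left(\int |\nabla \eta|^p \bar v^{\beta}\dx + \int|V|\eta^p \bar v^{\beta}\dx\right),
\end{equation*}
with $C=C(p,\Theta_\omega,\beta)$. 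The subsolution range is $\beta>p-1$, the supersolution range is $\beta<0$, and a separate log-argument bridges $\beta$ through a neighbourhood of $0$.

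The second integral on the right is where the Wolff hypothesis enters. Rewriting $|V|\eta^p \bar v^{\beta}=|V|(\eta \bar v^{\beta/p})^p$ and applying Theorem~\ref{morrey_adams}(ii) produces the bound $CW_V^{p-1}(2r)\int|\nabla(\eta \bar v^{\beta/p})|^p\dx$. Expanding the gradient yields a term proportional to $\int \eta^p \bar v^{\beta-p}|\nabla \bar v|^p\dx$ which, for $r$ sufficiently small so that Theorem~\ref{morrey_adams}(i) delivers an arbitrarily small absorption constant $\varepsilon>0$, may be absorbed into the left-hand side; the remaining contribution is controlled by $\int |\nabla \eta|^p \bar v^{\beta}\dx$. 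Applying the Sobolev embedding with $p^*=dp/(d-p)$ then gives a reverse H\"older inequality
\begin{equation*}
\left(\bint_{B_{\rho'}(x_0)} \bar v^{\beta p^*/p}\dx\right)^{p/p^*}\leq \frac{C}{(\rho-\rho')^p}\bint_{B_\rho(x_0)} \bar v^{\beta}\dx
\end{equation*}
on concentric balls $B_{\rho'}(x_0)\subset B_\rho(x_0)$. Iterating along a geometric sequence of exponents and radii in Moser fashion produces $\sup_{B_{r/2}(x_0)}\bar v\leq C(\bint_{B_r(x_0)}\bar v^{p_0}\dx)^{1/p_0}$ for any $p_0>0$ in the subsolution range, and $\inf_{B_{r/2}(x_0)}\bar v\geq C^{-1}(\bint_{B_r(x_0)}\bar v^{p_0}\dx)^{1/p_0}$ for some small $p_0>0$ in the supersolution range.

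To bridge the two sides, I would test the equation with $\varphi=\eta^p\bar v^{1-p}$ to derive a local bound on the oscillation of $\log \bar v$; combined with the Poincar\'e inequality this shows $\log \bar v\in\mathrm{BMO}_{\loc}$, and the John--Nirenberg lemma then yields $(\bint_{B_r(x_0)}\bar v^{p_0}\dx)(\bint_{B_r(x_0)}\bar v^{-p_0}\dx)\leq C$ for small $p_0>0$. Chaining the three estimates, covering $\omega'$ by a finite family of small balls (with cardinality depending only on $\dist{\omega'}{\omega}$), and letting $\delta\to 0^+$ yields $\sup_{\omega'}v\leq C\inf_{\omega'}v$ with the stated dependence of $C$. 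The main obstacle is the absorption step: unlike in the Morrey setting, H\"older continuity of solutions is not available, and one must rely solely on the smallness $W_V(r)\to 0$. The delicate point is to keep the absorption constant in Theorem~\ref{morrey_adams}(i) independent of the iteration exponent $\beta$, which forces the absorption scale $r_0$ to be fixed at the outset in terms of $W_V(\diam(\omega))$; the number of iterations along the covering chain then determines the final dependence of $C$ on this Wolff modulus.
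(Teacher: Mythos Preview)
The paper does not give its own proof of Theorem~\ref{thm_hrnck}; it simply recalls the result and cites \cite[Theorem~2.2]{biroli_1} (and also refers to \cite[Theorem~2.2]{skrypnik_1}) for the proof. Your outline via Moser iteration, using Theorem~\ref{morrey_adams} to absorb the Wolff-class potential term, followed by the Sobolev reverse-H\"older step, the $\log$/John--Nirenberg bridge, and a chaining argument, is precisely the scheme carried out in those references, so you have reconstructed the argument the paper is invoking rather than producing a genuinely different route.

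One comment on the bookkeeping: in your Caccioppoli inequality the constant $C$ cannot be kept uniform in $\beta$; testing with $\eta^p\bar v^{\beta}$ produces a factor $|\beta|$ (or $|\beta+1-p|$) on the good term, and after Young's inequality the right-hand side carries a coefficient that grows polynomially in $\beta$. The correct resolution is not to make the absorption constant from Theorem~\ref{morrey_adams}(i) independent of $\beta$, but rather to fix $r_0$ once (depending on $W_V(\diam(\omega))$) so that $W_V^{p-1}(2r_0)$ is small enough for the first step, and then track the polynomial growth in $\beta$ through the iteration; the resulting product $\prod_k C_k^{1/\beta_k}$ converges because $\beta_k$ grows geometrically. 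This is exactly how Biroli and Skrypnik organise the argument, and your final remark about the $r_0$-scale being fixed in terms of $W_V(\diam(\omega))$ is the right intuition, but the phrase ``keep the absorption constant independent of $\beta$'' is not literally achievable and should be replaced by ``track the $\beta$-growth through a convergent product''.
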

We recall the weak Harnack inequality which is valid for nonnegative supersolutions of  $Q_{p,A,V}(u)=0$. For the proof see \cite[Theorem 2.1]{skrypnik_1}.
\begin{theorem}[Weak Harnack inequality]\label{w_hrnck}
	Let $1<p<d$ and $\omega'\Subset \omega\Subset \Gw$. Then under Assumptions~\ref{assump}, for every $0<q< \frac{d(p-1)}{d-p}$ any nonnegative supersolution $v$ of  the equation of  $Q_{p,A,V}(u)=0$ in $\gw$ satisfies 
	$$\|v\|_{L^q(\omega')} \leq C \inf_{\omega'} v,$$
	 where the constant $C>0$ depends only on $d$, $p$, $q$, $\Theta_\omega$, $\mathcal{L}^d(\omega')$, $\dist{\omega'}{\omega}$ and $W_V(\diam(\omega))$. Here $ \mathcal{L}^d(\omega')$ is the Lebesgue measure of $\gw'$.
\end{theorem}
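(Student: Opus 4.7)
The plan is to follow the classical Moser iteration scheme for quasilinear equations as developed by Serrin, Trudinger, and DiBenedetto for the pure $p$-Laplacian, and then adapt each step to absorb the potential term $V|v|^{p-2}v$ using the Morrey--Adams type estimate for the Wolff class (Theorem~\ref{morrey_adams}(ii)). Throughout, I fix concentric balls $B_{r}(x_{0})\subset B_{R}(x_{0})\Subset \omega$ and test the supersolution inequality
\[
\int_{\omega}\bigl(|\nabla v|_{A}^{p-2}A\nabla v\cdot\nabla\varphi+V|v|^{p-2}v\varphi\bigr)\dx\geq 0
\]
with $\varphi=\eta^{p}(v+\varepsilon)^{\beta}$, where $\eta\in C_{c}^{\infty}(B_{R})$ is a standard cutoff, $\varepsilon>0$ regularises $v$ to $v_{\varepsilon}\defeq v+\varepsilon$, and $\beta\in\R\setminus\{0\}$. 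Using the local ellipticity from Assumptions~\ref{assump} and the algebraic identity $|\nabla v_{\varepsilon}^{\gamma}|_{A}^{p}=|\gamma|^{p}v_{\varepsilon}^{p(\gamma-1)}|\nabla v_{\varepsilon}|_{A}^{p}$ with $\gamma=(\beta+p-1)/p$, one obtains a Caccioppoli-type estimate
\[
\int_{B_{R}}\eta^{p}|\nabla v_{\varepsilon}^{\gamma}|^{p}\dx\leq C(\beta,p)\!\int_{B_{R}}\!\!\bigl(|\nabla\eta|^{p}+|V|\eta^{p}\bigr)v_{\varepsilon}^{p\gamma}\dx.
\]

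The next step is to dispose of the potential term. Applying Theorem~\ref{morrey_adams}(ii) to $u=\eta v_{\varepsilon}^{\gamma}\in W_{0}^{1,p}(B_{R})$ gives
\[
\int_{B_{R}}|V|(\eta v_{\varepsilon}^{\gamma})^{p}\dx\leq C\,W_{V}^{p-1}(2R)\int_{B_{R}}|\nabla(\eta v_{\varepsilon}^{\gamma})|^{p}\dx,
\]
so that for $R\leq\diam(\omega)$ the potential contribution can be absorbed into the gradient term, up to an additional $|\nabla\eta|^{p}v_{\varepsilon}^{p\gamma}$ error. Combining with Sobolev's embedding $W_{0}^{1,p}\hookrightarrow L^{p^{\ast}}$ with $p^{\ast}=dp/(d-p)$ yields the reverse H\"older inequality
\[
\Bigl(\Xint-_{B_{r}}v_{\varepsilon}^{p^{\ast}\gamma}\dx\Bigr)^{1/p^{\ast}\gamma}
\leq\Bigl(\tfrac{C}{(R-r)^{p}}\Bigr)^{1/p|\gamma|}\Bigl(\Xint-_{B_{R}}v_{\varepsilon}^{p\gamma}\dx\Bigr)^{1/p\gamma}
\]
whenever $p\gamma\neq 0$, where the constant depends only on $d,p,\Theta_{\omega},W_{V}(\diam(\omega))$.

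For $\beta<-(p-1)$, i.e.\ $\gamma<0$, iterating this inequality along a geometric sequence of radii and exponents (Moser iteration) and letting $\varepsilon\to 0$ gives
\[
\bigl\|v^{-1}\bigr\|_{L^{\infty}(\omega')}\leq C\Bigl(\Xint-_{\omega}v^{-q_{0}}\dx\Bigr)^{1/q_{0}}
\quad\text{for some fixed }q_{0}>0,
\]
which is equivalent to $\inf_{\omega'}v\geq C^{-1}(\Xint-_{\omega}v^{-q_{0}}\dx)^{-1/q_{0}}$. Dually, for $0<p\gamma<p^{\ast}(p-1)/p=d(p-1)/(d-p)$, iterating forward starting from any $s_{0}\in(0,q_{0})$ produces
\[
\Bigl(\Xint-_{\omega'}v^{q}\dx\Bigr)^{1/q}\leq C\Bigl(\Xint-_{\omega}v^{s_{0}}\dx\Bigr)^{1/s_{0}}\qquad 0<q<\tfrac{d(p-1)}{d-p}.
\]

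The remaining bridge is to connect the $L^{-q_{0}}$ and $L^{s_{0}}$ norms. Here I would test with $\varphi=\eta^{p}v_{\varepsilon}^{1-p}$ (the borderline case $\beta=1-p$, $\gamma=0$), which, after handling the potential by Theorem~\ref{morrey_adams}(ii) as above, yields a Caccioppoli estimate for $\log v$:
\[
\int_{B_{R}}|\nabla\log v_{\varepsilon}|^{p}\eta^{p}\dx
\leq C\!\int_{B_{R}}\!\!\bigl(|\nabla\eta|^{p}+|V|\eta^{p}\bigr)\dx
\leq \tilde C\,R^{d-p}\bigl(R^{-p}+W_{V}^{p-1}(2R)\bigr),
\]
using Corollary~\ref{cor_2} for the $L^{1}$ bound of $V$. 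A Poincar\'e--Sobolev inequality then places $\log v$ in $\mathrm{BMO}(\omega)$, and the John--Nirenberg inequality supplies a uniform $\mu>0$ with $\Xint-_{\omega}e^{\mu|\log v-c|}\dx\leq C$; splitting into the $\pm$ pieces produces $\Xint-_{\omega}v^{\mu}\dx\cdot\Xint-_{\omega}v^{-\mu}\dx\leq C$, which, chained with the two iterations, delivers the weak Harnack inequality for every $0<q<d(p-1)/(d-p)$. The main obstacle, as usual with non-standard potentials, is ensuring that all absorption constants depend only on $W_{V}(\diam(\omega))$ and not on $v$; this is why Theorem~\ref{morrey_adams}(ii) with the factor $W_{V}^{p-1}(2R)\to 0$ as $R\to 0$ is essential, since it guarantees that the geometric factors in the Moser iteration behave exactly as in the unperturbed $p$-Laplacian case once $R$ is chosen small enough.
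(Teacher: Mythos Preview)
The paper does not give its own proof of this statement: it merely cites \cite[Theorem~2.1]{skrypnik_1}. So there is no in-paper argument to compare against, and your sketch is effectively an attempt to reconstruct the standard Moser-iteration proof that Skrypnik's paper presumably carries out.

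The outline you give---Caccioppoli via the test function $\eta^{p}v_{\varepsilon}^{\beta}$, absorption of the potential by Theorem~\ref{morrey_adams}(ii), Sobolev, reverse H\"older, Moser iteration in both directions, and the $\mathrm{BMO}$/John--Nirenberg bridge---is the correct skeleton. There is, however, a genuine gap in your absorption step for the \emph{downward} Moser iteration (negative $\gamma$, $|\beta|\to\infty$). From the Caccioppoli inequality one obtains, after Young's inequality with parameter $\tau\sim|\beta|$,
\[
\int_{B_R}\!\eta^{p}|\nabla v_{\varepsilon}^{\gamma}|^{p}\dx
\;\leq\; C\!\int_{B_R}\!v_{\varepsilon}^{p\gamma}|\nabla\eta|^{p}\dx
\;+\;C\,|\beta|^{p-1}\!\int_{B_R}\!|V|\,\eta^{p}v_{\varepsilon}^{p\gamma}\dx,
\]
so the coefficient in front of the potential term grows like $|\beta|^{p-1}$. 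Applying Theorem~\ref{morrey_adams}(ii) then yields an absorption condition of the form $C\,|\beta|^{p-1}W_{V}^{p-1}(2R)<\tfrac12$. In the iteration the radii $r_{k}$ stay bounded \emph{below} by a fixed positive number (they decrease to the inner radius, not to zero), so $W_{V}(2r_{k})$ does not tend to zero along the iteration, while $|\beta_{k}|\to\infty$. Your concluding remark that ``$W_{V}^{p-1}(2R)\to 0$ as $R\to 0$'' therefore does not apply: $R$ is not going to zero here. As written, the absorption fails for all sufficiently large iteration indices.

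This is a known delicate point in Harnack theory for operators with Kato/Wolff-class potentials, and Skrypnik's proof handles it by a more careful organisation (keeping quantitative track of how the constants depend on the step and on the Wolff modulus at each scale, or using a different potential-theoretic estimate). Your sketch would need to address this explicitly---either by showing that the accumulated factor $\prod_{k}\bigl(1-C|\beta_{k}|^{p-1}W_{V}^{p-1}(2r_{k})\bigr)^{-1}$ can in fact be controlled (which requires more than $W_{V}(r)\to 0$), or by replacing Theorem~\ref{morrey_adams}(ii) with the $\delta$--$C_{\delta}$ form (Theorem~\ref{cor_1}) and tracking the growth of $C_{\delta}$ as $\delta\sim|\beta|^{1-p}$, or by switching to a De~Giorgi-type argument where no unbounded sequence of exponents appears.
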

As a consequence of Theorem~\ref{thm_hrnck}, we have the following result.
\begin{cor}[Local continuity]\label{mdlus_cont}
Let $1<p<d$. Then under Assumptions~\ref{assump}, any solution $v$ of \eqref{plaplace} is continuous and has the following bound on its modulus of continuity: if $B_{r_0}:=B_{r_0}(x_0)\Subset\Omega$	and $\beta \in (0,1)$, then there exist $\alpha \in(0,1)$ depending on $\beta$, and positive constants $C=C\big(d,p,\Theta_{B_{r_0}}, W_V(r_0)\big)$, $\gamma=\gamma \big(\d,p,\Theta_{B_{r_0}}, W_V(r_0)\big)$, such that 
$$\underset{B_r}{\osc} \,v \leq C\left[\left(\frac{r}{r_0}\right)^\alpha \underset{B_{r_0}}{\osc} \,v + \gamma W_V(r_0^{1-\beta}r^\beta) \right]\quad \forall\, 0<r<r_0. $$
\end{cor}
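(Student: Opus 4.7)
Fix $x_0\in\Gw$ and $B_{r_0}:=B_{r_0}(x_0)\Subset\Gw$, and let $v$ be a solution of \eqref{plaplace}. For $r\in(0,r_0]$, set $M(r):=\sup_{B_r}v$, $m(r):=\inf_{B_r}v$ and $\gw(r):=\osc_{B_r}v=M(r)-m(r)$. The plan is the classical three-step oscillation-decay argument: shift $v$ to produce nonnegative sub/supersolutions of an inhomogeneous equation, apply Harnack's inequality with a right-hand side quantified by $W_V$, and iterate.

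\textbf{Step 1 (Shifted functions).} Since $\nabla(v-k)=\nabla v$ for any constant $k$, the functions $w_1:=v-m(r)\ge 0$ and $w_2:=M(r)-v\ge 0$ are non-negative weak solutions in $B_r$ of
\[ -\Delta_{p,A}(w_1)=-V|v|^{p-2}v,\qquad -\Delta_{p,A}(w_2)=+V|v|^{p-2}v.\]
Thus both $w_i$ are non-negative supersolutions of an equation which coincides with the homogeneous equation $Q_{p,A,\tilde V}(u)=0$ for a suitable effective potential $\tilde V$ of the same Wolff class as $V$ (one absorbs $V|v|^{p-2}v$ into the zeroth-order term via a positive lower bound on $w_i+\gd$ on the set where $v$ is not constant, and passes to the limit $\gd\downarrow 0$; or, one works directly with the inhomogeneous equation throughout the Moser iteration).

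\textbf{Step 2 (Harnack with a right-hand side).} Apply the Moser iteration underlying Theorem~\ref{thm_hrnck} to $w_1$ and $w_2$ in $B_{r/2}\Subset B_r\subset B_{r_0}$, retaining the right-hand side $\pm V|v|^{p-2}v$. The crucial Morrey--Adams bound of Theorem~\ref{morrey_adams}(ii) controls the inhomogeneous terms appearing in the iteration by
\[ \int_{B_r}|V||\vgf|^p\dx \leq C\, W_V^{p-1}(2r)\int_{B_r}|\nabla\vgf|^p\dx,\qquad \vgf\in W^{1,p}_0(B_r),\]
so that raising to the $(p-1)$-th root yields (with $M_0:=\|v\|_{L^\infty(B_{r_0})}$, finite by Theorem~\ref{thm_hrnck})
\begin{align*}
\sup_{B_{r/2}}w_1 &\le C\bigl(\inf_{B_{r/2}}w_1 + \gamma_0 W_V(r)\, M_0\bigr),\\
\sup_{B_{r/2}}w_2 &\le C\bigl(\inf_{B_{r/2}}w_2 + \gamma_0 W_V(r)\, M_0\bigr),
\end{align*}
for a constant $C=C(d,p,\Theta_{B_{r_0}},W_V(r_0))$ and $\gamma_0=\gamma_0(d,p,\Theta_{B_{r_0}},W_V(r_0))$. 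Since $\sup_{B_{r/2}}w_1=M(r)-m(r/2)$, $\inf_{B_{r/2}}w_1=m(r/2)-m(r)$, and analogously for $w_2$, summing the two inequalities and regrouping gives
\[ \gw(r/2)\le \eta\,\gw(r) + \gamma_1 W_V(r)\qquad\forall r\in(0,r_0],\]
with $\eta=(C-1)/(C+1)\in(0,1)$ and $\gamma_1=\gamma_1(d,p,\Theta_{B_{r_0}},W_V(r_0),M_0)$.

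\textbf{Step 3 (Iteration lemma).} The inequality $\gw(r/2)\le\eta\,\gw(r)+\gamma_1 W_V(r)$, together with the fact that $W_V$ is a nondecreasing function of $r$ with $W_V(r)\to 0$ as $r\to 0$ (the defining property of $\mathfrak W^p_{\loc}(\Gw)$), falls under a standard iteration lemma: whenever a nondecreasing function $\gw:(0,r_0]\to[0,\infty)$ satisfies $\gw(\tau r)\le\eta\,\gw(r)+\phi(r)$ with fixed $\tau,\eta\in(0,1)$ and nondecreasing $\phi$, then for every $\gb\in(0,1)$ there exist $\ga=\ga(\eta,\tau,\gb)\in(0,1)$ and $C'>0$ such that
\[ \gw(r)\le C'\Bigl[(r/r_0)^{\ga}\,\gw(r_0)+\phi\bigl(r_0^{1-\gb}r^{\gb}\bigr)\Bigr]\qquad\forall r\in(0,r_0].\]
Applied with $\tau=1/2$ and $\phi=\gamma_1 W_V$, this is exactly the asserted bound on $\osc_{B_r}v$. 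Continuity of $v$ follows because $W_V(r)\to 0$ as $r\to 0$.

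\textbf{Main obstacle.} The only genuinely technical step is Step~2: pushing the inhomogeneity $V|v|^{p-2}v$ through the Moser iteration and quantifying its contribution purely in terms of $W_V$. This is delicate precisely because $V\in\mathfrak W^p_{\loc}(\Gw)$ is weaker than the Morrey/Kato assumptions used elsewhere in the literature, so the sharp Morrey--Adams estimate of Theorem~\ref{morrey_adams}(ii) is indispensable. Step~1's identification of shifts as suitable sub/supersolutions and Step~3's iteration are standard once the quantitative Harnack bound of Step~2 is in place; the interpolation $W_V(r_0^{1-\gb}r^\gb)$ is the usual output of the iteration lemma and does not add an independent difficulty.
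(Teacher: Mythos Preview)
Your approach is essentially the paper's: both establish the oscillation recurrence $\gw(r/2)\le\vartheta\,\gw(r)+\gamma\,W_V(r)$ via the shifted-function Harnack argument (the paper cites this directly from Liskevich--Skrypnik, Corollary~1.5, and the standard proofs in Mal\'y--Ziemer, Theorem~4.11 and Gilbarg--Trudinger, Theorem~8.22, whereas you sketch the derivation), and then apply the standard iteration lemma (Mal\'y--Ziemer, Lemma~4.12 or Gilbarg--Trudinger, Lemma~8.23). Your inclusion of $M_0=\|v\|_{L^\infty(B_{r_0})}$ in $\gamma_1$ is consistent with how those references handle the inhomogeneous term.
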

\begin{proof}
 Since the solution $v$ of \eqref{plaplace} is locally bounded \cite[Theorem 3.5]{biroli_1}, hence  $\|v\|_{L^\infty(B_{r_0})} <\infty$. For $0<r< r_0$, define 
$$M(r):= \sup_{B_r} v,\quad m(r):= \inf_{B_r} v,\quad \tau(r):=\underset{B_r}{\osc}\, v= M(r)- m(r).$$
 Following the arguments in \cite[Theorem 4.11]{maly_ziemer} or in\cite[Theorem 8.22]{trudinger}, one obtains that there exists a positive constant $\vartheta=\vartheta(d,p,\Theta_{B_{r_0}}, W_V(r_0))\in(0,1)$,   such that 
$$\tau\bigg(\frac{r}{2}\bigg) \leq \vartheta \tau(r) + \gamma W_V(r) \qquad \forall\, 0<r < r_0,$$
where $\gamma>0$ is a constant depending on \,$\d,p,\Theta_{B_{r_0}}, W_V(r_0)$ (see \cite[Corollary 1.5.]{LS_1} for $p(x)\equiv p$).

 %\Hmm{\red{Please provide a proof of these two estimates.}}

Then by applying \cite[Lemma 4.12]{maly_ziemer}, or \cite[Lemma 8.23]{trudinger}, it follows that for any $\beta\in(0,1)$, there exist a positive constant $C=C(\vartheta)>0$ and $\alpha \in(0,1)$ such that 
$$ \tau(r)\leq C\bigg[\left(\frac{r}{r_0}\right)^\alpha \tau(r_0) + \gamma W_V(r_0^{1-\beta}r^\beta)\bigg] 
\qquad \forall\, 0<r<r_0. $$
This implies the continuity of solutions since $W_V(r)\rightarrow 0$ as $r\rightarrow 0$.
\end{proof}
\begin{remark}{\em	
		When the potential $V$ lies in the Wolff class $\mathfrak{W}_{\loc}^p(\Gw)$  or in the Stummel-Kato class $\tilde{K}^p_{\loc}(\Omega)$, one should not  expect  H\"older continuity of solutions of \eqref{plaplace} (see for example, \cite{chiarenza et al,ragusa_zamboni}). 
		%But from proof of Corollary \eqref{mdlus_cont}, it can be seen that a sequence of solutions to \eqref{plaplace} have the same modulus of continuity. Therefore, the Arzel\`a-Ascoli theorem implies that for $1<p<d$ the family of solutions to \eqref{plaplace} over every compact subset $\tilde{\Omega}$ is equicontinuous.
}
\end{remark}
The following version of the  Harnack convergence principle will be used several times throughout the paper.
\begin{theorem}[Harnack convergence principle]\label{hrnck_principle}
Let $\{\Gw_i\}$	be a compact exhaustion of the domain $\Gw$. Suppose that $\{A_i\} \subset L^\infty(\Gw_i;\mathbb{R}^{d\times d})$ is a sequence of symmetric and locally uniformly positive definite matrices such that their local ellipticity constants are uniformly bounded away from $0$, and $\{A_i\}$ converges weakly in $L^\infty_{\loc}(\Gw;\mathbb{R}^{d\times d})$ to  $\mathbb{A} \in L^\infty_{\loc}(\Gw;\mathbb{R}^{d\times d})$ satisfying Assumptions~\ref{assump}. Furthermore, assume that 
\begin{itemize}
	\item[(i)] for $1<p\leq 2$, $\{V_i\}\in \mathfrak{W}^p(\Gw_i)$ converges weakly in the Wolff class $\mathfrak{W}_{\loc}^p(\Gw)$ to $\mathbb{V}\in \mathfrak{W}_{\loc}^p(\Gw)$,
	\item[(ii)] for $p>2$,  $\{V_i\}\in \mathfrak{W}^p(\Gw_i)$ converges with respect to the quasinorm of the Wolff class $\mathfrak{W}_{\loc}^p(\Gw)$ to $\mathbb{V}\in \mathfrak{W}_{\loc}^p(\Gw)$.
\end{itemize}

For each $i\in\mathbb{N}$, let $v_i$ be a positive (continuous) solution of  $Q_{p,A_i, V_i}(v)=0$ in $\Gw_i$ with $v_i(x_0)=1$, where $x_0\in \Gw_1$ is a fixed reference point. 

Then, up to a subsequence, $\{v_i\}$ converges weakly in $W^{1,p}_{\loc}(\Gw)$ and also locally uniformly to a positive solution $v$ of the equation $Q_{p,\mathbb{A}, \mathbb{V}}(v)=0$ in $\Gw$.
\end{theorem}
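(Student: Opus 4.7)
The plan is to proceed in three stages: first extract a weak $W^{1,p}_{\loc}$ limit via uniform Harnack and Caccioppoli estimates, then upgrade to locally uniform convergence using the modulus-of-continuity bound, and finally pass to the limit in the weak formulation by proving strong convergence of the gradients.

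First, since $v_i(x_0)=1$, I would iterate the local Harnack inequality (Theorem~\ref{thm_hrnck}) along a chain of balls joining $x_0$ to any compact $K\Subset\Omega$. The local ellipticity constants of the $A_i$ are uniformly bounded, and the Wolff moduli $W_{V_i}(\diam(\omega))$ are uniformly controlled on each $\omega\Subset\Omega$ thanks to the weak (respectively quasinorm) convergence of $\{V_i\}$, so the Harnack constants are uniform in $i$. This yields $0<c_K\leq v_i\leq C_K$ on $K$ for all large $i$. Testing the equation for $v_i$ with $\psi^p v_i$ for a standard cutoff $\psi$ and using the Morrey-Adams bound (Theorem~\ref{morrey_adams}(ii)) to absorb the potential term, I obtain a uniform Caccioppoli-type estimate $\|v_i\|_{W^{1,p}(\omega)}\leq C(\omega)$ for every $\omega\Subset\Omega$. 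Weak compactness (plus a diagonal extraction) then produces $v\in W^{1,p}_{\loc}(\Omega)$ and a subsequence, not relabelled, with $v_i\rightharpoonup v$ weakly in $W^{1,p}_{\loc}(\Omega)$.

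Next, Corollary~\ref{mdlus_cont} applied to each $v_i$ on balls $B_{r_0}\Subset\Omega$ furnishes a common modulus of continuity, since the constants depend only on $d$, $p$, the uniform ellipticity bound $\Theta_{B_{r_0}}$, and $W_{V_i}(r_0)$, all of which are uniformly controlled. Combined with the $L^\infty_{\loc}$ bound from Harnack, Arzel\`a--Ascoli gives $v_i\to v$ locally uniformly in $\Omega$ along a further subsequence. In particular, $v$ is continuous, $v(x_0)=1$, and $v\geq c_K>0$ on every compact $K\Subset\Omega$, so $v$ is strictly positive.

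The main obstacle is passing to the limit in the nonlinear principal part $|\nabla v_i|_{A_i}^{p-2}A_i\nabla v_i$; weak convergence of $\nabla v_i$ alone does not suffice. I plan to run a Minty--Browder/monotonicity argument: fix a nonnegative $\psi\in C_c^\infty(\Omega)$ and test the equation for $v_i$ with the admissible test function $(v_i-v)\psi$. The uniform convergence $v_i\to v$, together with Theorem~\ref{morrey_adams} applied to $V_i-\mathbb{V}$ (where weak convergence suffices when $1<p\leq 2$ because $|v_i|^{p-2}v_i(v_i-v)\psi$ lies in the appropriate predual of $\mathfrak{W}^p(\omega)$, and quasinorm convergence is required when $p>2$), will show that
\begin{equation*}
\int_{\Omega}|\nabla v_i|_{A_i}^{p-2}A_i\nabla v_i\cdot \nabla\bigl((v_i-v)\psi\bigr)\,\mathrm{d}x\longrightarrow 0.
\end{equation*}
Invoking the strong convexity inequality~\eqref{eq:strong-conv} together with the weak $L^\infty_{\loc}$ convergence of $A_i$ to $\mathbb{A}$, I expect to derive $\int_{\omega}[\nabla v_i,\nabla v]_{p,\mathbb{A}}\,\mathrm{d}x\to 0$ for each $\omega\Subset\Omega$, which forces $\nabla v_i\to \nabla v$ strongly in $L^p_{\loc}(\Omega;\mathbb{R}^d)$ and hence, after a further subsequence, pointwise a.e. Vitali's convergence theorem, combined with the uniform $W^{1,p}_{\loc}$ bound, then lets me pass to the limit in the weak formulation~\eqref{eq-ws} and conclude $Q_{p,\mathbb{A},\mathbb{V}}(v)=0$ in $\Omega$. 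The most delicate point throughout is the treatment of the potential term, where the dichotomy between weak convergence for $1<p\leq 2$ and quasinorm convergence for $p>2$ is imposed precisely so that the duality supplied by Theorem~\ref{morrey_adams} survives in the limit despite $\mathfrak{W}^p(\omega)$ failing to be a Banach space when $p>2$.
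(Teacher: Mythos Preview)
Your overall skeleton---uniform Harnack chains for $L^\infty_{\loc}$ bounds, a Caccioppoli estimate by testing with $\psi^p v_i$, Arzel\`a--Ascoli via the uniform modulus of continuity from Corollary~\ref{mdlus_cont}, and weak $W^{1,p}_{\loc}$ compactness---matches the paper's argument closely. The treatment of the potential term also ends up equivalent, though the paper organises it more simply: it splits $V_i v_i^{p-1}\varphi-\mathbb V v^{p-1}\varphi$ as $V_i(v_i^{p-1}-v^{p-1})\varphi+(V_i-\mathbb V)v^{p-1}\varphi$, kills the first piece by uniform convergence and the uniform $L^1_{\loc}$ bound on $V_i$ (Corollary~\ref{cor_2}), and the second by weak (or quasinorm) convergence against the fixed test function $v^{p-1}\varphi$. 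Your Morrey--Adams route is more circuitous but not wrong.

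The substantive divergence is in the principal part. The paper does \emph{not} run a Minty--Browder argument at all; it simply invokes \cite[Proposition~2.7]{giri_pinchover}, which asserts directly that under the stated hypotheses $|\nabla v_i|_{A_i}^{p-2}A_i\nabla v_i\rightharpoonup |\nabla v|_{\mathbb A}^{p-2}\mathbb A\nabla v$ in $L^{p'}_{\loc}$. Your monotonicity approach is natural, but the step you flag as ``I expect to derive'' hides a genuine difficulty: the strong convexity inequality~\eqref{eq:strong-conv} is stated for a \emph{fixed} matrix, so testing with $(v_i-v)\psi$ yields control of $\int_\omega[\nabla v_i,\nabla v]_{p,A_i}\,\psi\,\mathrm dx$, not $[\nabla v_i,\nabla v]_{p,\mathbb A}$. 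To close the loop you must show that the cross term $\int_\omega\psi\,|\nabla v|_{A_i}^{p-2}A_i\nabla v\cdot\nabla(v_i-v)\,\mathrm dx\to 0$; since $\nabla v_i-\nabla v\rightharpoonup 0$ only weakly, this requires $|\nabla v|_{A_i}^{p-2}A_i\nabla v\to |\nabla v|_{\mathbb A}^{p-2}\mathbb A\nabla v$ \emph{strongly} in $L^{p'}_{\loc}$, which does not follow from weak-$*$ convergence of $A_i$ in $L^\infty_{\loc}$ alone (the map $A\mapsto|\nabla v|_A^{p-2}A\nabla v$ is nonlinear in $A$ for $p\neq 2$). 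The cited proposition in \cite{giri_pinchover} resolves exactly this point using the PDE satisfied by $v_i$; your sketch would need either that result or an equivalent argument to be complete.
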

\begin{proof}
	(i) Assume first that  $1<p\leq 2$.  So,  $\{V_i\}\in \mathfrak{W}^p(\Gw_i)$ converges weakly in the Wolff class $\mathfrak{W}_{\loc}^p(\Gw)$ to $\mathbb{V}\in \mathfrak{W}_{\loc}^p(\Gw)$. The sequence $\{A_i\}$ of matrices is locally uniformly elliptic and converges in the weak topology of $L^\infty_{\loc}(\Gw;\mathbb{R}^{d\times d})$, so $\|A_i\|_{L^\infty(\tilde{\omega};\mathbb{R}^{d\times d})}\leq C$ for every $\tilde{\omega}\Subset\Gw$. Therefore, the matrices $A_i$ are locally uniformly positive definite matrices which are uniformly bounded in every $\tilde{\omega}\Subset\Gw$ except for a set of measure zero.
	
	Since  $v_i$ is a positive weak solution of the equation $Q_{p,A_i, V_i}(v)=0$ in $\Gw_i$, we have 
	\begin{equation*}\label{heqn_1}
	\int_{\Omega_i}|\nabla v_i|_{A_i}^{p-2}A_i\nabla v_i\cdot\nabla u\dx+\int_{\Omega_i}V_iv_i^{p-1}u\dx=0 \quad \text{for all}\,\,u\in W_0^{1,p}(\Omega_i).
	\end{equation*}
	Fixing a natural number $k$ and $u\in C_c^\infty(\Gw_k)$, we may plug $v_i|u|^p\in W_0^{1,p}(\Gw_k)$, $i\geq k$, as a test function in the above equality to obtain
	\begin{equation*}
	\|\, |\nabla v_i|_{A_i}u\|^p_{L^p(\Omega_k)}\leq p \int_{\Omega_k}|\nabla v_i|_{A_i}^{p-1}|u|^{p-1}v_i|\nabla u|_{A_i}\dx+\int_{\Omega_k}|V_i|v_i^p|u|^p\dx.
	\end{equation*}
Applying Young's inequality  $p a b\!\leq\! \epsilon a^{p'}\!+\![(p\!-\!1)/\epsilon]^{p-1}b^p$, $\epsilon \!\in \! (0,1)$, with $a \! = \! |\nabla v_i|_{A_i}^{p-1}|u|^{p-1}$ and $b\!=\!v_i|\nabla u|_{A_i}$ on the first term, and Morrey-Adams theorem (Theorem~\ref{cor_1}) for the second term of the above inequality, we get
\begin{align*}
&(1-\epsilon)\||\nabla v_i|_{A_i}u\|^p_{L^p(\Omega_k)}\\
&\leq \left(\frac{p-1}{\epsilon}\right)^{p-1} \!\!\|v_i|\nabla u|_{A_i}\|^p_{L^p(\Omega_k)} +\delta\|\nabla(v_iu)\|^p_{L^p(\Omega_k;\mathbb{R}^d)}
 +C_\gd \|v_i u\|^p_{L^p(\Omega_k)},
\end{align*}
where $C_\gd=C\big(d,p,\delta,W_{V_i}(\diam(\Omega_{k+1}))\big)$.
Since the sequence $\{A_i\}$ is locally uniformly elliptic and bounded a.e., therefore, using the inequality
$$\|\nabla(v_iu)\|^p_{L^p(\Omega_k;\mathbb{R}^d)}\leq 2^{p-1}\left(\|v_i\nabla u\|^p_{L^p(\Omega_k;\mathbb{R}^d)}+\|u\nabla v_i\|^p_{L^p(\Omega_k;\mathbb{R}^d)}\right),$$
we obtain the following estimates for every $i\geq k$ and any $u\in C_c^\infty(\Omega_k)$
	\begin{align}\nonumber
&\left((1\!-\!\epsilon)C_{\Omega_k}^p\!-\!2^{p-1}\delta C_{\Omega_k}^{-p}\right)\!\||\nabla v_i|u\|^p_{L^p(\Omega_k)}  \\
&\leq \left(\!\!\left(\!\frac{p\!-\!1}{\epsilon}\!\right)^{p\!-\!1}C^{-p}_{\Omega_k}\!+
\!2^{p-1}\delta\!\right)\!\|v_i|\nabla u|\|^p_{L^p(\Omega_k)}\!\!+\!C_\gd \|v_i u\|^p_{L^p(\Omega_k)}.\label{heqn_2}
\end{align}
Pick an arbitrary subdomain $\omega\Subset\Omega$. Without loss of generality, we may assume that $x_0\in\omega$. Now for a subdomain  $\omega'\Subset\Omega$ with $\omega\Subset\omega'$, there exists $k\geq 1$ such that $\omega'\Subset\Omega_k$. Let $0<\delta <(1-\epsilon)2^{1-p}C^{2p}_{\Omega_k}$ and specialize $u\in C_c^\infty(\Omega_k)$ such that
\begin{equation}\label{heqn_3}
\text{supp}\{u\}\subset \omega',\,0\leq u\leq 1,\,\,u=1\,\text{in}\,\omega, \text{ and } |\nabla u|\leq \frac{1}{\text{dist}(\omega,\partial \omega')}\,\,\text{in}\,\,\omega'.
\end{equation}

Our assumption on the weak convergence of $\{V_i\}$ in $\mathfrak{W}_{\loc}^p(\Gw)$ implies that $\{W_{V_i}(\diam(\omega'))\}$ is bounded in $\R$. Hence, by the local Harnack inequality, the sequence of solutions $\{v_i\}$ is bounded  in $L^\infty(\omega)$. Also, by Corollary~\ref{mdlus_cont}, the solutions $\{v_i\}$  have a common modulus of continuity in $\gw$. Thus, Arzel\`a-Ascoli theorem implies that up to a subsequence, $\{v_i\}$ converges uniformly in $\omega$. Moreover, using $u$ as in \eqref{heqn_3} to the estimates \eqref{heqn_2}, we get
	$$\|\nabla v_i\|^p_{L^p(\omega;\mathbb{R}^d)}+\|v_i\|^p_{L^p(\omega)}\leq C\big(d,p,\epsilon,\delta, \text{dist}(\omega,\partial \omega'), C_{\Omega_k}, W_{V_i}(\diam(\Omega_{k+1}))\big),$$
for every $i\geq k$. This shows that the sequence $\{v_i\}$ is bounded in $W^{1,p}(\omega)$. Hence, up to a subsequence, $\{v_i\}$ converges weakly to a nonnegative function $v\in W^{1,p}(\omega)\cap C(\gw)$ with $v(x_0)=1$. Since up to subsequence, $\{v_i\}$ converges uniformly in $\gw$, it follows that
\begin{align*}
v_i \rightarrow v \quad  \text{ uniformly in } \omega,  \quad \text{ and } \nabla v_i  \rightharpoonup \nabla v\,\,\text{in}\,L^{p}(\omega;\mathbb{R}^d).
\end{align*}

{\bf Claim:}  $v$ is a weak solution of $Q_{\mathbb{A},p,\mathbb{V}}(u)=0$ in $\tilde{\omega}\Subset \omega$. Indeed, using the uniform convergence  of  $\{v_i\}$ to $v$ in $\gw$, we get
\begin{align}
&\left|\int_{\tilde{\omega}}(V_iv_i^{p-1}\vgf -\mathbb{V}v^{p-1}\vgf)\dx\right| \nonumber \\
&\leq \int_{\tilde{\omega}} |V_i|\,|v_i^{p-1}-v^{p-1}|\,|\vgf| \dx+\left|\int_{\tilde{\omega}} (V_i -\mathbb{V})v^{p-1}\vgf \dx\right| \nonumber\\
&\leq C\|v_i-v\|_{L^\infty(\tilde{\omega})} \int_{\tilde{\omega}}|V_i|\dx+\left|\int_{\tilde{\omega}} (V_i -\mathbb{V})v^{p-1}\vgf \dx\right| \quad \forall \varphi \in C_c^\infty(\tilde{\omega}).\label{qqn}
\end{align}
 Since $\{W_{V_i}(\diam(\omega'))\}$ is bounded in $\R$, the sequence $\{V_i\}$ is bounded in $L_{\loc}^1(\Omega)$ (see Corollary~\ref{cor_2}). Thus, the first term of the above inequality converges to zero, while the second term converges to zero by our assumption  on the weak convergence of $\{V_i\}$ to $\mathbb{V}$. 
%%%%%%%%%%%%%%%%%%%%
%Indeed, define 
%$$\phi(x):= \begin{cases}
%	v^{p-1}(x)\varphi(x) & \mbox{ when } x\in \tilde{\gw},\\
%	0 & \mbox{ when } x\in \gw\setminus{\tilde{\gw}}.
%\end{cases}$$
%Since $\varphi \in C_c^\infty(\tilde{\omega})$, so the function $\phi$ is a continuous in $\gw$ with $\supp(\phi)\subset \tilde{\gw}$, and by the Corollary~\ref{cor_2}, we have 
%\begin{align*}
%	<\phi, V>_\gw:=\int_\gw V\phi \dx\leq \|\phi\|_{L^\infty(\omega)}\|V\|_{L^1(\tilde{\gw})}\leq %C\|\phi\|_{L^\infty(\omega)}\|V\|_{\mathfrak{W}^p(\omega)},
%\end{align*}
%for any $V\in \mathfrak{W}^p_{\loc}(\Gw)$.
%%%%%%%%%%%%%%%%%%%%%%%%%%%%%% 
 Therefore, from \eqref{qqn}, we get
\begin{equation*}\label{heqn_4}
\int_{\tilde{\omega}}V_iv_i^{p-1}\vgf\dx \rightarrow \int_{\tilde{\omega}}\mathbb{V}v^{p-1}\vgf\dx \qquad \text{for every } \vgf\in C_c^\infty(\tilde{\omega}).
\end{equation*}
On the other hand, the  sequence $\{A_i\}$ of matrices converges weakly in $L^\infty_{\loc}(\Gw;\mathbb{R}^{d\times d})$ to $\mathbb{A}\in L^\infty_{\loc}(\Gw;\mathbb{R}^{d\times d})$ and $\nabla v_i  \rightharpoonup \nabla v\,\,\text{in}\,L^{p}(\omega;\mathbb{R}^d)$, consequently, by \cite[Proposition 2.7]{giri_pinchover} it follows that
\begin{equation*}\label{heqn_w}
|\nabla v_i|^{p-2}_{A_i}A_i\nabla v_i \rightharpoonup_{i\rightarrow\infty} |\nabla v|^{p-2}_{\mathbb{A}}\mathbb{A}\nabla v \,\,\,\text{in}\,L^{p'}(\tilde{\omega};\mathbb{R}^d).
\end{equation*}
Thus, the claim is proved.\\
(ii) Assume now that  $p>2$. In this case,  we only need to show that 
\begin{equation}\label{heqn_5}
\int_{\tilde{\omega}}V_iv_i^{p-1}\vgf\dx \rightarrow \int_{\tilde{\omega}}\mathbb{V}v^{p-1}\vgf\dx \qquad \text{for every } \vgf\in C_c^\infty(\tilde{\omega}),
\end{equation}
since the rest of the proof follows as in (i). Obviously,  
\begin{align}
&\left|\int_{\tilde{\omega}}\!\!(V_iv_i^{p-1}u \!-\!\mathbb{V}v^{p-1}u)\!\dx\right|
\!\leq \!\int_{\tilde{\omega}}\!\!|V_i \!-\!\mathbb{V}||v_i|^{p-1}\!|u| \!\dx \!+\!\int_{\tilde{\omega}}\!\! |\mathbb{V}||v_i^{p-1} \!-\! v^{p-1}| |u| \!\dx \nonumber\\
&\leq C\!\int_{\tilde{\omega}} \!\! |V_i-\mathbb{V}| |u| \dx+\!\int_{\tilde{\omega}}\! |\mathbb{V}||v_i^{p-1}-v^{p-1}| |u|\dx. \label{eq18}
\end{align}
Clearly,  $\mathbb{V} v_i^{p-1}\rightarrow \mathbb{V}v^{p-1}$ pointwise in $\tilde{\omega}$, and by the  uniform bound of $\{v_i\}$ in $L^\infty(\tilde{\omega})$,  $|\mathbb{V} v_i^{p-1}|\leq C|\mathbb{V}|$, where $C$ is independent of $i$. Since $|\mathbb{V}|\in \mathfrak{W}^p_{\loc}(\Gw) \subset L^1_{\loc}(\Gw)$, the  dominated convergence theorem implies that 
$$\int_{\omega} |\mathbb{V}|\,|v_i^{p-1}-v^{p-1}|\, |u|\dx \rightarrow 0\qquad \forall u\in C_c^\infty(\tilde{\omega}).$$
Moreover, $V_i\rightarrow \mathbb{V}$ in the quasinorm of $\mathfrak{W}^p_{\loc}(\Gw)$, and    $\mathfrak{W}^p_{\loc}(\Gw)$ is continuously embedded in $L^1_{\loc}(\Omega)$ (Corollary~\ref{cor_2}), it follows that  the first integral of the right-hand side of \eqref{eq18} converges to $0$ as $i\rightarrow \infty$. Thus,  \eqref{heqn_5} follows.
\end{proof}
\subsection{The Dirichlet problem}
The present subsection is devoted to the existence of a solution of the nonhomogeneous Dirichlet problem \eqref{dirch_prb}  in a bounded Lipschitz subdomain.  For a uniqueness result, see Theorem~\ref{existence_th}.  We begin with a lemma concerning the weakly lower semicontinuity and coerciveness of a functional $J$ associated with this Dirichlet problem.
\begin{lem}\label{lem_wlc_coer}
	Let  $1<p<d$, and $\omega\Subset \tilde{\omega}\Subset \omega'\Subset \mathbb{R}^d$, where $\gw$ is a Lipschitz domain.  Assume that $A$ and $V$ satisfy Assumptions~\ref{assump}, and let $\mathcal{G}\in \mathfrak{W}^p(\omega')$. Define a functional $J: W^{1,p}(\omega) \rightarrow \mathbb{R}\cup \{\infty\}$ by 
	$$J[u]:= \mathcal{Q}_{p,A,V}(u;\omega) -\int_\omega \mathcal{G} |u|\dx=\int_\omega |\nabla u|_{A}^p \dx + \int_\omega V|u|^p\dx -\int_\omega \mathcal{G} |u|\dx.$$
	Then:
	\begin{itemize}
		\item[(i)] The functional $J$ is weakly lower semicontinuous in $W^{1,p}(\omega)$.
		\item[(ii)] Moreover, if $V\geq 0$, then for any $g\in W^{1,p}(\omega)$ the functional $J$ is coercive in 
		$$S_g:=\{u\in W^{1,p}(\omega)\mid u=g \,\,\, \mbox{on } \partial \omega \mbox{ in the  trace sense} \}.$$
	\end{itemize} 
\end{lem}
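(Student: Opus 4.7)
Split $J=J_1+J_2-J_3$, where $J_1[u]:=\int_\omega |\nabla u|_A^p\dx$, $J_2[u]:=\int_\omega V|u|^p\dx$, and $J_3[u]:=\int_\omega \mathcal{G}|u|\dx$. The plan for (i) is to show that $J_1$ is weakly lower semicontinuous on $W^{1,p}(\omega)$ while $J_2$ and $J_3$ are \emph{continuous} along weakly convergent sequences; part (ii) will then follow from local uniform ellipticity, the Poincar\'e inequality on $W^{1,p}_0(\omega)$, and the Morrey--Adams bound of Theorem~\ref{cor_3}.

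For $J_1$, the integrand $(x,\xi)\mapsto|\xi|_{A(x)}^p$ is a nonnegative Carath\'eodory function, convex in $\xi$ (as the $p$-th power of a norm) and dominated by $\Theta_\omega^{p/2}|\xi|^p$ on $\omega$, so the classical Tonelli--Serrin theorem yields wlsc of $J_1$ on $W^{1,p}(\omega)$. For $J_2$ and $J_3$, take $u_n\rightharpoonup u$ in $W^{1,p}(\omega)$; by Rellich--Kondrachov (using the Lipschitz boundary), $u_n\to u$ in $L^p(\omega)$ along a subsequence, while $\{\nabla u_n\}$ remains bounded in $L^p(\omega;\mathbb{R}^d)$. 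Applying Theorem~\ref{cor_3} (with $\omega\Subset\tilde{\omega}\Subset\omega'$) to both $h=V$ and $h=\mathcal{G}$ gives, for any $\delta>0$,
\[
\int_\omega |h||v|^p\dx \,\leq\, \delta\|\nabla v\|_{L^p(\omega)}^p + C_\delta\|v\|_{L^p(\omega)}^p \qquad \forall v\in W^{1,p}(\omega).
\]
Taking $v=u_n-u$, sending $n\to\infty$ (so the second term vanishes) and then $\delta\to 0$ shows $\int_\omega|h||u_n-u|^p\dx\to 0$. Combining the elementary bound $\bigl||a|^p-|b|^p\bigr|\leq p(|a|^{p-1}+|b|^{p-1})|a-b|$ with H\"older's inequality and the uniform estimate $\int_\omega|V|(|u_n|^p+|u|^p)\dx\leq C$ (again from the displayed inequality) yields $J_2[u_n]\to J_2[u]$. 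For $J_3$, the H\"older splitting $\int_\omega|\mathcal{G}||u_n-u|\dx\leq \bigl(\int_\omega|\mathcal{G}|\dx\bigr)^{1/p'}\bigl(\int_\omega|\mathcal{G}||u_n-u|^p\dx\bigr)^{1/p}$, together with $\mathcal{G}\in L^1(\omega)$ (Corollary~\ref{cor_2}), gives $J_3[u_n]\to J_3[u]$. Hence $J$ is wlsc.

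For (ii), fix $u\in S_g$ and note $u-g\in W^{1,p}_0(\omega)$, so Poincar\'e yields $\|u-g\|_{L^p(\omega)}\leq C_P\|\nabla(u-g)\|_{L^p(\omega)}$ and hence $\|u\|_{L^p(\omega)}\leq \|g\|_{L^p(\omega)}+C_P\bigl(\|\nabla u\|_{L^p(\omega)}+\|\nabla g\|_{L^p(\omega)}\bigr)$. Since $V\geq 0$ and $|\xi|_{A(x)}^p\geq \Theta_\omega^{-p/2}|\xi|^p$ on $\omega$,
\[
J[u] \,\geq\, \Theta_\omega^{-p/2}\|\nabla u\|_{L^p(\omega)}^p - \int_\omega|\mathcal{G}||u|\dx.
\]
Theorem~\ref{cor_3} applied to $|u|^p$ combined with the $L^p$ bound on $u$ gives $\int_\omega|\mathcal{G}||u|^p\dx\leq C_g(1+\|\nabla u\|_{L^p(\omega)}^p)$; one more H\"older step (as for $J_3$ above) yields $\int_\omega|\mathcal{G}||u|\dx\leq C'_g(1+\|\nabla u\|_{L^p(\omega)})$. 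Substituting,
\[
J[u]\,\geq\, \Theta_\omega^{-p/2}\|\nabla u\|_{L^p(\omega)}^p - C'_g\bigl(1+\|\nabla u\|_{L^p(\omega)}\bigr).
\]
Since $\|u\|_{W^{1,p}(\omega)}\to\infty$ within $S_g$ forces $\|\nabla u\|_{L^p(\omega)}\to\infty$ (via the Poincar\'e bound), and $p>1$, we conclude $J[u]\to\infty$, proving coerciveness on $S_g$.

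The main obstacle is that the Wolff-class potentials $V,\mathcal{G}$ are too singular to allow direct use of the Sobolev embedding or of H\"older's inequality with fixed Lebesgue exponents. The whole argument rests on transferring the lower-order integrals into quantities controlled by the Dirichlet energy $\|\nabla u\|_{L^p}^p$ via the Morrey--Adams type estimate of Theorem~\ref{cor_3}, which crucially holds for all of $W^{1,p}(\omega)$ (not only $W^{1,p}_0(\omega)$) thanks to the Lipschitz assumption on $\partial\omega$.
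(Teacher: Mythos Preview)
Your proof is correct and follows essentially the same route as the paper's: convexity of $\xi\mapsto|\xi|_{A}^{p}$ for the gradient term, compact embedding plus the Morrey--Adams estimate (Theorem~\ref{cor_3}) to upgrade weak convergence to continuity of the lower-order terms, and for (ii) the combination $V\geq 0$, Poincar\'e on $u-g\in W_0^{1,p}(\omega)$, and the H\"older splitting $\int|\mathcal{G}||u|\leq\|\mathcal{G}\|_{L^1}^{1/p'}\bigl(\int|\mathcal{G}||u|^p\bigr)^{1/p}$. The only cosmetic differences are that the paper handles $J_2$ via Minkowski's inequality on $(\int V^{\pm}|\cdot|^{p})^{1/p}$ together with Fatou (rather than your bound $\bigl||a|^{p}-|b|^{p}\bigr|\leq p(|a|^{p-1}+|b|^{p-1})|a-b|$ plus weighted H\"older), and phrases coercivity as boundedness of sublevel sets rather than $J[u]\to\infty$.
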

\begin{proof}
(i). Let $\{u_n\}$ be a sequence in $W^{1,p}(\omega)$ such that $u_n\rightharpoonup u$ in $W^{1,p}(\omega)$. Consequently, $\{\|u_n\|_{W^{1,p}(\omega)}\}$ is bounded.  Since $W^{1,p}(\omega)$ is compactly embedded in $L^p(\omega)$, it follows that,  up to a subsequence,  $u_n\rightarrow u$ in $L^p(\omega)$ and  $u_n\rightarrow u$ a.e. in $\omega$.  \\
Let $\delta>0$. Using Minkowski's inequality and Theorem~\ref{cor_3}, we get
\begin{align}
&\left(\int_\omega V^\pm |u_n|^p \dx\right)^{1/p}-\left(\int_\omega V^\pm |u|^p \dx\right)^{1/p}
 \leq \left(\int_\omega V^\pm |u_n-u|^p\dx\right)^{1/p}\nonumber\\
& \leq \left(\delta \|\nabla(u_n-u)\|^p_{L^p(\omega, \mathbb{R}^d)} +C(d, p, \delta, \|V^\pm\|^{1/(p-1)}_{\mathfrak{W}^p(\omega')})\|u_n-u\|^p_{L^p(\omega)}\right)^{1/p}\nonumber\\
&\leq \delta^{1/p}\left(C+\|\nabla u\|_{L^p(\omega, \mathbb{R}^d)}\right) +C(d, p, \delta, \|V^\pm\|^{1/(p-1)}_{\mathfrak{W}^p(\omega')})\|u_n-u\|_{L^p(\omega)},\nonumber
\end{align}
where $C=\underset{n\in\mathbb{N}}{\sup} \|u_n\|_{W^{1,p}(\omega)}$. This implies that 
$\underset{n\rightarrow\infty}{\lim\sup}\int_\omega V^\pm |u_n|^p \dx \leq \int_\omega V^\pm |u|^p \dx$.
On the other hand, Fatou's lemma implies 
$\int_\omega V^\pm |u|^p \dx \leq \underset{n\rightarrow\infty}{\lim\inf}\int_\omega V^\pm |u_n|^p \dx$. Therefore, 
$$\underset{n\rightarrow\infty}{\lim}\int_\omega V |u_n|^p \dx = \int_\omega V|u|^p \dx.$$
In addition, the weak lower semicontinuity of the gradient term is immediate since the mapping $\xi\mapsto |\xi|^p_{A(x)}$ is convex. Thus,
$$\mathcal{Q}_{p,A,V}(u)\leq \underset{n\rightarrow \infty}{\liminf} \mathcal{Q}_{p, A, V}(u_n).$$
Similarly, for the last term of the functional $J$, we have
\begin{align*}
&\int_\omega \mathcal{G}^\pm |u_n| \dx- \int_\omega \mathcal{G}^\pm |u| \dx
 \leq \|\mathcal{G}^\pm\|^{1/p'}_{L^1(\omega)}\left(\int_\omega \mathcal{G}^\pm |u_n-u|^p\dx\right)^{1/p}\\
&\leq \|\mathcal{G}^\pm\|^{1/p'}_{L^1(\omega)}\!\!\left[\delta^{1/p}\left(\!C\!\!+\!\|\nabla u\|_{L^p(\omega, \mathbb{R}^d)}\!\right) \!\!+\!C(d, p, \delta, \|\mathcal{G}^\pm\|^{1/(p-1)}_{\mathfrak{W}^p(\omega')})\|u_n-u\|_{L^p(\omega)}\!\right]\!.
\end{align*}
This shows that $\underset{n\rightarrow\infty}{\lim\sup}\int_\omega \mathcal{G}^\pm |u_n| \dx \leq \int_\omega \mathcal{G}^\pm |u| \dx.$
On the other hand, by Fatou's lemma we have 
$\int_\omega \mathcal{G}^\pm |u| \dx \leq \underset{n\rightarrow\infty}{\lim\inf}\int_\omega \mathcal{G}^\pm |u_n| \dx.$ Thus,  
$$\underset{n\rightarrow\infty}{\lim}\int_\omega \mathcal{G} |u_n| \dx = \int_\omega \mathcal{G} |u| \dx,$$
and part (i) is proved.

(ii) Fix a number $t\in\R$, and suppose $u\in S_g$ such that $J[u]\leq t$. In order to prove that  the functional $J$ is coercive in $S_g$, it is enough to show that $\|u\|_{W^{1,p}(\omega)}\leq C$, where the positive constant $C$
is independent of $u$.\\
Since $V\geq 0$ in $\omega$ and $J[u]\leq t$, it follows that
\begin{align}
&\int_\omega |\nabla u|_A^p \dx\leq t +\int_\omega \mathcal{G} |u|\dx \nonumber\\
&\leq t+ \|\mathcal{G}\|^{1/p'}_{L^1(\omega)}\left( \int_\omega |\mathcal{G}||u|^p \dx\right)^{1/p}
 \leq t + C\|u\|_{W^{1,p}(\omega)},\label{eq:1}
\end{align}
where the last inequality follows from the Morrey-Adams theorem (Theorem~\ref{cor_3}), and the constant $C$ depends only on $d, p, \delta, \omega, \omega', \|\mathcal{G}\|^{1/(p-1)}_{\mathfrak{W}^p(\omega')}$ and $\|\mathcal{G}\|_{L^1(\omega)}$. Since $A$ is locally uniformly elliptic,  \eqref{eq:1} implies 
\begin{equation}\label{eq:2}
\|\nabla u\|^p_{L^p(\omega,\mathbb{R}^d)} \leq C_1 + C_2 \|u\|_{W^{1,p}(\omega)},
\end{equation}
where the positive constants $C_1$ and $C_2$ are independent of $u$. Since $u=g$ on $\partial \omega$, it follows  $u-g\in W_0^{1,p}(\omega)$. Furthermore,  by the Poincar\'e  inequality in $W_0^{1,p}(\omega)$, the uniform ellipticity of $A$ and inequality \eqref{eq:1}, we have 
\begin{align*}
\|u\|_{L^p(\omega)}&\leq \|u-g\|_{L^p(\omega)}+ \|g\|_{L^p(\omega)}
\leq C_p\|\nabla(u-g)\|_{L^p(\omega,\mathbb{R}^d)} + \|g\|_{L^p(\omega)}\\
& \leq C_p(\|\nabla u\|_{L^p(\omega,\mathbb{R}^d)}) + \|\nabla g\|_{L^p(\omega,\mathbb{R}^d)}) +\|g\|_{L^p(\omega)}\\
&\leq \frac{C_p}{\Theta_\omega} \left(\Big(\int_\omega |\nabla u|_A^p \dx\Big)^{1/p} +\|\nabla g\|_{L^p(\omega,\mathbb{R}^d)}\right) +\|g\|_{L^p(\omega)}\\
& \leq \frac{C_p}{\Theta_\omega} \left(\left( t+ C\|u\|_{W^{1,p}(\omega)}\right)^{1/p}+\|\nabla g\|_{L^p(\omega,\mathbb{R}^d)}\right) +\|g\|_{L^p(\omega)},
\end{align*}
where the constant $C_p$ depends only on $d, \omega$ and $C$ as in \eqref{eq:1}. Therefore, 
\begin{equation}
\|u\|_{L^p(\omega)}^p  \leq C_3 +C_4 \|u\|_{W^{1,p}(\omega)},\label{eq:3}
\end{equation}
where $C_3$, $C_4$ are positive numbers independent of $u$. Thus, from \eqref{eq:2} and \eqref{eq:3} we get 
$$\|u\|_{W^{1,p}(\omega)}^p  \leq C_5 +C_6 \|u\|_{W^{1,p}(\omega)},$$
where the positive constants $C_5$ and $C_6$ are independent of $u$. Since $p>1$ the latter inequality implies that there exists $C>0$ (independent of $u$) such that $\|u\|_{W^{1,p}(\omega)}  \leq C$.
\end{proof}
Consequently, we have: 
\begin{theorem}\label{dirchlet_prb}
	Assume $1<p<d$ and let $\omega\Subset \Gw$ be any Lipschitz subdomain. Let $A$ satisfy Assumptions~\ref{assump}, and let $V,\mathcal{G}\in \mathfrak{W}^p_{\loc}(\Gw)$, where $V, \mathcal{G}\geq 0$ a.e. in $\omega$,  and let $ g\in W^{1,p}(\omega)\cap C(\bar{\omega})$ be nonnegative. 
	Then there exists a nonnegative solution $v\in W^{1,p}(\omega)\cap C(\bar{\omega})$ to the Dirichlet problem 
	\begin{align}\label{dirch_prb}
	\begin{cases}
	Q_{p,A,V}(u) = \mathcal{G} & \mbox{ in }  \omega, \\
	u =g  &\mbox{ on }   \partial \omega.
	\end{cases}
	\end{align}
\end{theorem}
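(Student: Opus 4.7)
The plan is to apply the direct method of the calculus of variations to (a positive constant multiple of) the functional of Lemma~\ref{lem_wlc_coer}, namely
\[
J[u] := \int_\omega |\nabla u|_A^p\dx + \int_\omega V|u|^p\dx - p\int_\omega \mathcal{G}|u|\dx,
\]
on the affine set $S_g:=\{u\in W^{1,p}(\omega):u-g\in W_0^{1,p}(\omega)\}$. Since $p\mathcal{G}\in\mathfrak{W}^p_{\loc}(\Omega)$ is nonnegative, Lemma~\ref{lem_wlc_coer} yields weak lower semicontinuity of $J$ on $W^{1,p}(\omega)$ and coercivity on $S_g$ (the latter using $V\geq 0$). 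Since $S_g$ is nonempty, convex, and weakly closed in $W^{1,p}(\omega)$, a standard weak compactness argument produces a minimizer $v\in S_g$. Because $g\geq 0$, the function $|v|$ also lies in $S_g$ and satisfies $J[|v|]=J[v]$ (via the identities $\big|\nabla|v|\big|_A=|\nabla v|_A$ a.e., $\big||v|\big|^p=|v|^p$, and $\big||v|\big|=|v|$), so we may assume $v\geq 0$.

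The main obstacle is deriving the Euler--Lagrange equation despite the nonsmooth term $p\mathcal{G}|u|$. I would regularize by replacing $|u|$ with $\sigma_\varepsilon(u):=\sqrt{u^2+\varepsilon^2}-\varepsilon$ to obtain a smooth strictly convex functional $J_\varepsilon$ whose minimizers $v_\varepsilon\in S_g$, nonnegative by the same reflection argument, satisfy the classical weak equation
\[
\int_\omega |\nabla v_\varepsilon|_A^{p-2}A\nabla v_\varepsilon\cdot\nabla\varphi\dx + \int_\omega V v_\varepsilon^{p-1}\varphi\dx = \int_\omega \mathcal{G}\,\frac{v_\varepsilon}{\sqrt{v_\varepsilon^2+\varepsilon^2}}\,\varphi\dx
\]
for all $\varphi\in C_c^\infty(\omega)$, with all integrals well defined by Theorem~\ref{morrey_adams}. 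Uniform coercivity produces a $W^{1,p}(\omega)$ bound on $\{v_\varepsilon\}$; extracting a weak limit and invoking a Minty-type monotonicity argument based on the strict convexity inequality~\eqref{eq:strong-conv} upgrades this to strong $W^{1,p}$ convergence of $\nabla v_\varepsilon$. Passing to the limit $\varepsilon\to 0$, using that $v\geq 0$ implies $v_\varepsilon/\sqrt{v_\varepsilon^2+\varepsilon^2}\to 1$ a.e.\ on $\{v>0\}$, that $\mathcal{G}\in L^1_{\loc}(\Omega)$ by Corollary~\ref{cor_2}, and the weak Harnack inequality (Theorem~\ref{w_hrnck}) to rule out interior degeneracy on $\{v=0\}$, identifies the limit as a nonnegative weak solution of $Q_{p,A,V}(v)=\mathcal{G}$ in $\omega$ lying in $S_g$.

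Interior continuity of $v$ is immediate from Corollary~\ref{mdlus_cont}. Continuity up to $\partial\omega$ is then obtained by a standard barrier construction at each boundary point, using the Lipschitz regularity of $\partial\omega$, the continuity of $g$ on $\bar\omega$, the local uniform ellipticity of $A$, and the Morrey--Adams estimate of Theorem~\ref{cor_3}. The hardest technical step is the penalization limit: the Minty monotonicity argument and the passage to the limit in the nonsmooth right-hand side both rely essentially on the strict convexity of $\xi\mapsto|\xi|_{A(x)}^p$ (Assumption~\ref{assump_strong}) and on the $L^1_{\loc}$-embedding of $\mathfrak{W}^p_{\loc}(\Omega)$ provided by Corollary~\ref{cor_2}.
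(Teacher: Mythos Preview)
Your overall strategy---apply Lemma~\ref{lem_wlc_coer} to obtain weak lower semicontinuity and coercivity of $J$ on $S_g$, extract a nonnegative minimizer, then derive the Euler--Lagrange equation---is the same as the paper's. The substantive difference is in the last step, where you take an unnecessarily hard road.

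The paper avoids your regularization entirely by a one-line observation. Alongside $J$ it considers the \emph{smooth} functional
\[
I[u]:=\int_\omega|\nabla u|_A^p\dx+\int_\omega V|u|^p\dx-\int_\omega \mathcal{G}\,u\dx,
\]
whose Euler--Lagrange equation is exactly \eqref{dirch_prb}. Since $\mathcal{G}\geq 0$ one has $J[u]\leq I[u]$ for all $u$, and for any $u\in S_g$ the reflection $|u|\in S_g$ (because $g\geq 0$) satisfies $I[|u|]=J[|u|]=J[u]$. Hence $\inf_{S_g}I=\inf_{S_g}J$, and any nonnegative minimizer $v$ of $J$ is automatically a minimizer of $I$. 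The PDE then drops out immediately, with no regularization, no Minty argument, and no need for anything beyond Assumptions~\ref{assump}.

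Your penalization route can presumably be pushed through, but as written it has loose ends: the claimed reliance on Assumption~\ref{assump_strong} is not justified (the standard monotonicity of the $(p,A)$-Laplacian coming from Assumptions~\ref{assump} is enough for a Minty-type argument), and the treatment of the limit of $v_\varepsilon/\sqrt{v_\varepsilon^2+\varepsilon^2}$ on $\{v=0\}$ via ``weak Harnack to rule out interior degeneracy'' is only a gesture---you would still need to argue separately that the minimizer cannot vanish identically when $\mathcal{G}\not\equiv 0$. None of this is needed once you notice the $I$-versus-$J$ trick. Your remarks on boundary continuity via barriers are reasonable; the paper itself does not spell that step out.
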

\begin{proof}
	Consider the functional $I: W^{1,p}(\omega) \rightarrow \mathbb{R}\cup \{\infty\}$ defined by 
	$$I[u]:= \mathcal{Q}_{p,A,V}(u;\omega) -\int_\omega \mathcal{G} u\dx=\int_\omega |\nabla u|_{A}^p \dx + \int_\omega V|u|^p\dx -\int_\omega \mathcal{G} u\dx.$$
	Let $\{u_n\}\subset S_g=\{u\in W^{1,p}(\omega)\mid u\!=\!g  \mbox{ on } \partial \omega\}$ be such that 
	$I[u_n]\searrow m:=\underset{u\in S_g}{\inf}I[u]$. Since $g\geq 0$, we  have $\{|u_n|\}\subset S_g$. Now, consider the functional $J: W^{1,p}(\omega) \rightarrow \mathbb{R}\cup \{\infty\}$ defined as in Lemma~\ref{lem_wlc_coer}. Since $\mathcal{G}\geq 0$, it follows that  $J[u]\leq I(u)$ and  $m\leq I[|u_n|]=J[u_n]\leq I[u_n]$. This implies that $J[u_n]\rightarrow m$ as $n\rightarrow \infty$ and $m=\underset{u\in S_g}{\inf}J[u]$.\\
	By Lemma~\ref{lem_wlc_coer}, the functional $J$ is weakly lower semicontinuous and also coercive in $S_g$. So,  in light of Mazur's theorem, and since $J$ is even,  there exists a nonnegative $u\in S_g$ such that $J[u]=m$. Also we have $J[u]=I[u]=m$. Thus, $u$ is a minimizer of the functional $I$ in $S_g$, and hence, a solution of problem \eqref{dirch_prb}.
\end{proof}
\subsection{Principal eigenvalue and the maximum principle}
In this subsection we recall the definition of a principal eigenvalue of the operator $Q_{p,A,V}$ satisfying Assumptions~\ref{assump}, and discuss its properties.
\begin{definition}[Eigenvalue]{\em 
	Let $\omega\Subset\Omega$ be a bounded domain. 
	
	1. A real number $\Lambda$ is said to be an {\em eigenvalue with an eigenfunction $v$} of the Dirichlet eigenvalue problem 
	\begin{align}\label{evp}
		\begin{cases}
			Q_{p,A,V}(u)=\Lambda |u|^{p-2}u & \mbox{ in } \omega,\\
			u=0 & \mbox{ on } \partial \omega,
		\end{cases}
	\end{align}
if $v\in W_0^{1,p}(\omega)\setminus \{0\}$ and $v$ satisfies
$$\int_\omega |\nabla v|_A^{p-2}A\nabla v\cdot \nabla\varphi\dx+ \int_\omega V|v|^{p-2}v\varphi \dx=\Lambda \int_\omega |v|^{p-2}v\varphi \dx \quad \forall \varphi \in C_c^\infty(\omega).$$
 
 2. A {\em principal eigenvalue}  of the Dirichlet eigenvalue problem \eqref{evp} is an eigenvalue of \eqref{evp} with a nonnegative eigenfunction.
}\end{definition}
The following theorem on the existence of principal eigenvalue extends the result in \cite[Thorem 3.9]{pinchover_psaradakis}, where the potential $V$ is assumed to be in the Morrey space $M^q(p;\omega)$. The proof follows the same arguments as in \cite[Thorem 3.9]{pinchover_psaradakis} (using the Morrey-Adams theorem for $V\in\mathfrak{W}^p(\omega)$), and therefore it is omitted.
\begin{theorem}
	Assume that $A$ and $V$ satisfy Assumptions~\ref{assump}. Then $Q_{p,A,V}$ on a subdomain  $\omega\Subset \Gw$ admits a unique principal eigenvalue, denoted by $\Lambda_1$. Moreover,  $\Lambda_1$ is given by the Rayleigh-Ritz variational formula 
	$$\Lambda_1:=\Lambda_1(Q_{p,A,V}; \omega)=\Lambda_1(\omega)=\inf_{u\in W_0^{1,p}(\omega)\setminus\{0\}}\frac{\mathcal{Q}_{p,A,V}(u;\omega)}{\|u\|^p_{L^p(\omega)}}.$$ 
	Furthermore, the principal eigenvalue $\Lambda_1$ is simple and isolated.
\end{theorem}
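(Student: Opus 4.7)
The plan is to mirror the strategy used in \cite{pinchover_psaradakis} for the Morrey-space setting, with the key estimates replaced by the Morrey-Adams theorem for the Wolff class (Theorem~\ref{cor_1}).

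First, I would verify that the Rayleigh quotient
$$R[u]:=\frac{\mathcal{Q}_{p,A,V}(u;\omega)}{\|u\|_{L^p(\omega)}^p},\qquad u\in W_0^{1,p}(\omega)\setminus\{0\},$$
is bounded below. Writing $V=V^+-V^-$, Theorem~\ref{cor_1} applied to $V^-$ with a small parameter $\delta$ gives, after using local uniform ellipticity of $A$, the coercivity bound
\begin{equation*}
\mathcal{Q}_{p,A,V}(u;\omega)\geq \tfrac{1}{2}\Theta_\omega^{-1}\|\nabla u\|_{L^p(\omega)}^p - C_\delta \|u\|_{L^p(\omega)}^p,
\end{equation*}
so $\Lambda_1>-\infty$ and minimizing sequences for $R$, after normalization $\|u_n\|_{L^p(\omega)}=1$, are bounded in $W_0^{1,p}(\omega)$.

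Second, I would extract a weak limit $v\in W_0^{1,p}(\omega)$ from a minimizing sequence, which by Rellich-Kondrachov converges strongly in $L^p(\omega)$ (so $\|v\|_{L^p(\omega)}=1$) and a.e. The weak lower semicontinuity argument of Lemma~\ref{lem_wlc_coer} applies verbatim to $\mathcal{Q}_{p,A,V}(\cdot;\omega)$ (replace the term $\int \mathcal{G}|u|\,\mathrm{d}x$ there by $0$): using Theorem~\ref{cor_3} together with Minkowski and Fatou, one proves $\int V|u_n|^p\,\mathrm{d}x\to\int V|v|^p\,\mathrm{d}x$, while convexity of $\xi\mapsto|\xi|_A^p$ gives lower semicontinuity of the gradient term. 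Hence $R[v]=\Lambda_1$, and replacing $v$ by $|v|$ (using that the gradient and potential terms are even in $u$) produces a nonnegative minimizer. Computing the Euler-Lagrange equation yields that $v$ is a weak solution of $Q_{p,A,V}(u)=\Lambda_1|u|^{p-2}u$ in $\omega$, and the local Harnack inequality (Theorem~\ref{thm_hrnck}) applied to the operator with shifted potential $V-\Lambda_1$ forces $v>0$ on every compact subset of $\omega$.

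Third, for simplicity and uniqueness of the principal eigenvalue, I would invoke a Picone-type identity for the $(p,A)$-Laplacian with potential: if $w\in W_0^{1,p}(\omega)$ is another nonnegative eigenfunction corresponding to some principal eigenvalue $\mu\geq\Lambda_1$, plugging $\varphi=(w^p+\epsilon)/(v+\epsilon)^{p-1}$ (and $\psi=v$) as test functions in the two eigenvalue equations and subtracting yields, after sending $\epsilon\to 0^+$, a nonnegative quantity on one side and $(\mu-\Lambda_1)\int v\,w^p/(v+\epsilon)^{p-1}\,\mathrm{d}x\leq 0$ on the other, forcing $\mu=\Lambda_1$ and proportionality of $v$ and $w$. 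The Morrey-Adams estimates are needed here to justify that all integrals with $V$ are absolutely convergent and that limits pass through. Finally, isolation follows by a standard argument: if $\Lambda_1<\mu_n\to\Lambda_1$ were eigenvalues with eigenfunctions $w_n$ normalized in $L^p(\omega)$, the coercivity bound and weak lower semicontinuity would give (along a subsequence) a weak $W_0^{1,p}(\omega)$ limit $w$ that is a principal eigenfunction at $\Lambda_1$, hence (by simplicity) a multiple of $v$; but the orthogonality-type relation obtained from using $v$ as a test function in the equation for $w_n$ (combined with Picone) forces $w_n$ to change sign for large $n$, and then the local Harnack inequality applied to $w_n^\pm$ together with the measure estimates derived from the weak Harnack inequality (Theorem~\ref{w_hrnck}) yields a contradiction. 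The main obstacle is keeping rigorous control of the $V$-integrals when passing to limits in the Picone argument, for which the Wolff-class Morrey-Adams estimates of Section~\ref{sec_pre} and the embedding $\mathfrak{W}^p_{\loc}(\Omega)\hookrightarrow L^1_{\loc}(\Omega)$ of Corollary~\ref{cor_2} are decisive.
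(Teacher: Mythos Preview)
Your proposal is correct and matches the paper's approach exactly: the paper omits the proof, stating only that it ``follows the same arguments as in \cite[Theorem~3.9]{pinchover_psaradakis} (using the Morrey-Adams theorem for $V\in\mathfrak{W}^p(\omega)$),'' and your outline is precisely a detailed execution of that strategy---coercivity and weak lower semicontinuity via the Wolff-class Morrey-Adams estimate (Theorem~\ref{cor_1}/Lemma~\ref{lem_wlc_coer}), existence of a nonnegative minimizer, Harnack to get strict positivity, Picone for simplicity, and a compactness/sign-change contradiction for isolation.
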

We recall a generalized  Picone identity which implies the strict monotonicity of $\Lambda_1(Q_{p,A,V}; \omega)$  as as a function of subdomains  $\gw \Subset \Omega$.  For the proof of Picone identity see \cite[Lemma 3.1]{PR} and Remark~3.2 therein.
\begin{lemma}[Picone identity]\label{pincone}
	Let $v>0, u\geq 0$ be in $W_{\loc}^{1,p}(\Omega)$ such that $uv^{-1}\in L^\infty_{\loc}(\Omega)$, and assume that $A$ and $V$ satisfy Assumptions~\ref{assump}. For $x\in \Omega$ define
	\begin{equation}\label{picone_eq1}
		L_A(u,v)(x):= |\nabla u|_A^p+(p-1)\frac{u^p}{v^p}|\nabla v|_A^p-p\frac{u^{p-1}}{v^{p-1}}\nabla u\cdot A(x)\nabla v|\nabla v|_A^{p-2}, 
	\end{equation}
and 
\begin{equation}\label{picone_eq2}
	R_A(u,v)(x):=|\nabla u|_A^p-\nabla\left(\frac{u^p}{v^{p-1}}\right)\cdot A(x)\nabla v|\nabla v|_A^{p-2}.
\end{equation}
Then $$L_A(u,v)(x)=R_A(u,v)(x)\geq 0 \quad \mbox{a.e. } \,x\in \Omega.$$
Moreover, $L_A(u,v)=0$ a.e.~in $\Omega$ if and only if $u=cv$ in $\Omega$ for some constant $c\geq 0$.
\end{lemma}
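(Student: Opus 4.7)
The plan is to prove the Picone identity in two essentially independent steps: first establish the pointwise algebraic equality $L_A(u,v)=R_A(u,v)$, then derive the nonnegativity $L_A(u,v)\geq 0$ together with the characterization of equality. Both steps are local and reduce to algebra and Young/Cauchy--Schwarz-type inequalities; the only analytic input is a weak chain rule for $u^p/v^{p-1}$.

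For the identity $L_A=R_A$, I would apply the chain/quotient rule to $u^p/v^{p-1}$. The hypotheses $u,v\in W^{1,p}_{\loc}(\Omega)$, $v>0$, and $u/v\in L^{\infty}_{\loc}(\Omega)$ ensure that $u^p/v^{p-1}\in W^{1,p}_{\loc}(\Omega)$ with
$$\nabla\!\left(\frac{u^p}{v^{p-1}}\right) = p\,\frac{u^{p-1}}{v^{p-1}}\,\nabla u - (p-1)\frac{u^p}{v^p}\,\nabla v \quad \text{a.e.\ in }\Omega.$$
Taking the $A$-inner product with $|\nabla v|_A^{p-2}\nabla v$ and substituting into \eqref{picone_eq2} cancels against the corresponding cross- and $v$-terms in \eqref{picone_eq1}, yielding $L_A(u,v)=R_A(u,v)$ directly.

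For nonnegativity I would set $\xi:=\nabla u$ and $\eta:=(u/v)\nabla v$, so that by homogeneity
$$L_A(u,v) = |\xi|_A^p + (p-1)|\eta|_A^p - p\,(\xi\cdot A\eta)\,|\eta|_A^{p-2}.$$
The claim then reduces to the elementary chain
$$p\,(\xi\cdot A\eta)\,|\eta|_A^{p-2} \leq p\,|\xi|_A\,|\eta|_A^{p-1} \leq |\xi|_A^p + (p-1)\,|\eta|_A^p,$$
where the first estimate is the Cauchy--Schwarz inequality for the inner product $\langle\cdot,\cdot\rangle_A$, and the second is Young's inequality with conjugate exponents $p$ and $p/(p-1)$. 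Tracking the equality cases: Young saturates iff $|\xi|_A=|\eta|_A$, and Cauchy--Schwarz (with $\eta\neq 0$) iff $\xi=\lambda\eta$ with $\lambda\geq 0$; together these force $\xi=\eta$, i.e., $v\nabla u = u\nabla v$ a.e.\ on $\{\eta\neq 0\}$. On the complementary set $\{\eta=0\}$ one has $L_A(u,v)=|\xi|_A^p$, so $L_A=0$ also forces $\nabla u=0$ there, and the standard identity $\nabla u=0$ a.e.\ on $\{u=0\}$ for Sobolev functions closes the argument. Hence $L_A(u,v)=0$ a.e.\ in $\Omega$ is equivalent to $\nabla(u/v)=0$ a.e.; connectedness of $\Omega$ then gives $u=cv$ for some constant $c\geq 0$.

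The main technical obstacle is verifying the weak chain rule for $u^p/v^{p-1}$ under only $W^{1,p}_{\loc}$ regularity: this is where the local boundedness of $u/v$ and the strict positivity of $v$ are essential, and it is handled via truncation of $v$ from below by $\varepsilon>0$, application of standard Sobolev composition results, and passage to the limit $\varepsilon\to 0$ using the pointwise bound $u/v\in L^\infty_{\loc}$ to control the integrands. Once this step is in place, the remainder of the proof is purely algebraic, as in \cite[Lemma 3.1]{PR}.
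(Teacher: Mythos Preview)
Your proposal is correct and follows the standard route to the Picone identity: a chain-rule computation to show $L_A=R_A$, then Cauchy--Schwarz in the $A$-inner product combined with Young's inequality for nonnegativity and the equality case. The paper itself gives no proof here but simply cites \cite[Lemma~3.1]{PR}, whose argument is precisely the one you outline; your treatment of the technical point (the weak chain rule for $u^p/v^{p-1}$ via truncation and the hypotheses $v>0$, $u/v\in L^\infty_{\loc}$) matches what is needed there.
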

Using this Picone identity we have the following theorem on the positivity of principal eigenvalue which implies that if $\mathcal{Q}_{p,A,V}(\varphi)\geq 0 $ for all $\varphi\in C_c^\infty(\Omega)$, then $\Lambda_1(\omega)>0$ for any bounded subdomain $\omega\Subset\Omega$. This extends the result obtained in \cite[Lemma 5.1]{PR} for potential $V\in L^\infty_{\loc}(\Omega)$.
\begin{theorem}\label{cn_positivity}
	Let $\omega_1\Subset\omega_2\Subset\Omega$ be Lipschitz subdomains. Suppose that $A$ and $V$ satisfy Assumptions~$\ref{assump}$, and $\mathcal{Q}_{p,A,V}\geq 0$ in $\omega_2$. Then $\Lambda_1(\omega_1)>\Lambda_1(\omega_2)\geq 0$.
\end{theorem}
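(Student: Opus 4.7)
The plan is to establish the two assertions separately: first the nonnegativity $\Lambda_1(\omega_2)\geq 0$ from the Rayleigh--Ritz characterization, and then the strict monotonicity via the Picone identity of Lemma~\ref{pincone}.

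\textbf{Nonnegativity.} The Morrey--Adams estimate of Theorem~\ref{cor_1} shows that $\varphi\mapsto \mathcal{Q}_{p,A,V}(\varphi;\omega_2)$ is continuous on $W_0^{1,p}(\omega_2)$. By density of $C_c^\infty(\omega_2)$ in $W_0^{1,p}(\omega_2)$, the hypothesis $\mathcal{Q}_{p,A,V}\geq 0$ in $\omega_2$ extends to all elements of $W_0^{1,p}(\omega_2)$, and the variational formula for $\Lambda_1(\omega_2)$ then immediately gives $\Lambda_1(\omega_2)\geq 0$.

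\textbf{Picone comparison.} Let $v_i>0$ denote a principal eigenfunction of $Q_{p,A,V}$ on $\omega_i$ ($i=1,2$), and extend $v_1$ by zero to $\omega_2$. Both eigenfunctions are continuous by Corollary~\ref{mdlus_cont}, and since $\overline{\omega}_1\subset\omega_2$, we have $m:=\inf_{\overline{\omega}_1}v_2>0$. Hence $v_1^p/v_2^{p-1}$, extended by zero outside $\omega_1$, belongs to $W_0^{1,p}(\omega_2)$. After a standard regularization (replacing $v_2$ by $v_2+\varepsilon$ and letting $\varepsilon\to 0^+$, where Theorem~\ref{cor_1} is used to pass to the limit in the potential term), this function is admissible in the weak formulation of the eigenvalue equation for $v_2$, yielding
\begin{equation*}
\int_{\omega_1}\nabla\!\left(\frac{v_1^p}{v_2^{p-1}}\right)\cdot A\nabla v_2\,|\nabla v_2|_A^{p-2}\dx+\int_{\omega_1}V v_1^p\dx=\Lambda_1(\omega_2)\int_{\omega_1}v_1^p\dx.
\end{equation*}
Subtracting this identity from the one obtained by testing the eigenvalue equation for $v_1$ against $v_1$, namely $\mathcal{Q}_{p,A,V}(v_1;\omega_1)=\Lambda_1(\omega_1)\|v_1\|_{L^p(\omega_1)}^p$, and recognizing the resulting expression via \eqref{picone_eq2} as $\int_{\omega_1}R_A(v_1,v_2)\dx$, I obtain
\begin{equation*}
\int_{\omega_1}L_A(v_1,v_2)\dx=\int_{\omega_1}R_A(v_1,v_2)\dx=\big(\Lambda_1(\omega_1)-\Lambda_1(\omega_2)\big)\|v_1\|_{L^p(\omega_1)}^p.
\end{equation*}
Since the left integrand is pointwise nonnegative by Lemma~\ref{pincone}, it follows that $\Lambda_1(\omega_1)\geq \Lambda_1(\omega_2)$.

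\textbf{Strictness.} Suppose for contradiction that equality holds. Then $L_A(v_1,v_2)=0$ a.e.\ in $\omega_1$, and the equality case of Lemma~\ref{pincone} forces $v_1=c\,v_2$ in $\omega_1$ for some $c\geq 0$. But the trace of $v_1$ vanishes on $\partial\omega_1$ whereas $v_2\geq m>0$ on $\overline{\omega}_1$, so $c=0$, contradicting $v_1>0$ in $\omega_1$. I expect the main obstacle to be the rigorous justification that $v_1^p/v_2^{p-1}$ is an admissible test function under only the Wolff-class regularity $V\in\mathfrak{W}^p_{\loc}(\Omega)$; this will rest on the above regularization and on controlling the potential term via the Morrey--Adams estimate of Theorem~\ref{cor_1}.
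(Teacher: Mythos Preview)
Your proof is correct and follows essentially the same route as the paper: both establish $\Lambda_1(\omega_2)\geq 0$ from the variational formula, then use the Picone identity (Lemma~\ref{pincone}) with the test function $v_1^p/v_2^{p-1}$ to obtain $\int_{\omega_1}L_A(v_1,v_2)\dx=(\Lambda_1(\omega_1)-\Lambda_1(\omega_2))\|v_1\|_{L^p}^p\geq 0$, and conclude strictness from the equality case together with $v_2>0$ on $\partial\omega_1$. The only cosmetic difference is that the paper handles admissibility by approximating $\phi_1$ with a sequence $\varphi_n\in C_c^\infty(\omega_1)$ and passing to the limit via Fatou, whereas you work directly with $v_1$ and invoke an $\varepsilon$-shift of $v_2$ (which, since $v_2\geq m>0$ on $\overline{\omega}_1$, is in fact not even needed).
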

\begin{proof}
	From the Rayleigh-Ritz variational formula, we get $\Lambda_1(\omega_2)\geq 0$. Let $\phi_i\in W^{1,p}_{0}(\omega_i)$ be the normalized principal eigenfunctions corresponding to the principal eigenvalues $\Lambda_1(\omega_i)$, $i=1,2$. By the Harnack inequality, $\phi_i$ does not vanish in $\omega_i$ for $i=1,2$, and hence $\frac{\phi_1}{\phi_2}\in L^\infty_{\loc}(\omega_1)$.  Now consider a nonnegative minimizing sequence $\{\varphi_n\}\subset C_c^\infty(\omega_1)$ that converges to $\phi_1$ in $W^{1,p}_{0}(\omega_1)$. Then by using the Picone identity (Lemma~\ref{pincone}) we have
	\begin{align*}
		0\leq &\int_{\omega_2}L_A(\varphi_n, \phi_2)\d x =  \int_{\omega_1}L_A(\varphi_n, \phi_2)\d x=  \int_{\omega_1}R_A(\varphi_n, \phi_2)\d x\\
	&=	\int_{\omega_1}|\nabla \varphi_n|_A^p\d x- \int_{\omega_1}\nabla \left(\frac{\varphi_n^p}{\phi_2^{p-1}}\right)\cdot A(x)\nabla \phi_2|\nabla \phi_2|_A^{p-2}\d x\\
	&= \int_{\omega_1}|\nabla \varphi_n|_A^p\d x + \int_{\omega_1} V \varphi_n^p\d x -\Lambda_1(\omega_2)\int_{\omega_1}\varphi_n^p\d x, 
	\end{align*}
where we have used $\frac{\varphi_n^p}{\phi_2}\in W_0^{1,p}(\omega_2)$ as a test function in weak formulation of $\eqref{evp}$ in $\omega_2$. Letting $n\rightarrow \infty$ and using Fatou's lemma, we obtain
\begin{align*}
0\leq \int_{\omega_1}L_A(\phi_1, \phi_2)\d x &\leq \lim_{n\rightarrow\infty}\int_{\omega_1} L_A(\varphi_n, \phi_2)\d x\\
&= \left(\Lambda_1(\omega_1)-\Lambda_1(\omega_2)\right)\int_{\omega_1}\varphi_1^p\d x.	
\end{align*} 
This implies that $\Lambda_1(\omega_1)\geq \Lambda_1(\omega_2)$. If $\Lambda_1(\omega_1)= \Lambda_1(\omega_2)$, then $L_A(\phi_1, \phi_2)=0$ a.e. in $\omega_1$. Therefore,  Lemma~\ref{pincone}, implies $\phi_2|_{\omega_1}=c\phi_1$ for some $c>0$. But by the Harnack inequality  $\phi_2>0$ on $\partial\omega_1$ which contradicts the fact that $\phi_1=0$ on $\partial \omega_1$ in the trace sense. Thus, $\Lambda_1(\omega_1)>\Lambda_1(\omega_2)$. 
\end{proof}
The positivity of the principal eigenvalue in a bounded subdomain implies the generalized maximum principle for  $\mathcal{Q}_{p,A,V} \geq 0$.
\begin{theorem}[Maximum principle] \label{mp}
	Suppose that $A$ and $V$ satisfy Assumptions~\ref{assump}, and assume that $\mathcal{Q}\geq 0$ in $\Gw$. Let $\omega\Subset \Gw$ be a subdomain. Then the following assertions are equivalent:
\begin{itemize}
	\item[(i)] The principal eigenvalue $\Lambda_1$ of $Q_{p,A,V}$ in $\gw$ is positive.
	\item[(ii)] The weak maximum principle for $Q_{p,A,V}$ holds true in $\gw$: If $v\in W^{1,p}(\omega)$ is a solution of 
	\begin{align}\label{mp_e}
		\begin{split}
			Q_{p,A,V}(u)&=\mathcal{G} \quad \mbox{in } \omega, \\
			u&\geq 0 \quad \mbox{on } \partial\omega,
		\end{split}
	\end{align} 
where $\mathcal{G}\in L^1_{\loc} (\gw)$ is nonnegative, then $v\geq 0$ in $\omega$.
\item[(iii)] The strong maximum principle for $Q_{p,A,V}$ holds true in $\gw$: a solution $v\in W^{1,p}(\omega)$ of \eqref{mp_e} is either $v = 0$ or $v>0$ in $\omega$. 
\end{itemize}
\end{theorem}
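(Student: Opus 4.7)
I will establish the cycle of implications $(i)\Rightarrow(ii)\Rightarrow(iii)\Rightarrow(i)$.

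For $(i)\Rightarrow(ii)$, I would test the equation $Q_{p,A,V}(v)=\mathcal{G}$ against $v^-:=\max\{-v,0\}$, which lies in $W_0^{1,p}(\omega)$ because $v\geq 0$ on $\partial\omega$ in the trace sense. On $\{v<0\}$ we have $\nabla v=-\nabla v^-$ and $|v|^{p-2}v=-(v^-)^{p-1}$, while on $\{v\geq 0\}$ the factor $v^-$ vanishes; hence the weak formulation collapses to
$$-\mathcal{Q}_{p,A,V}(v^-;\omega)=\int_\omega \mathcal{G}\, v^-\dx\geq 0.$$
Combined with the Rayleigh-Ritz bound $\mathcal{Q}_{p,A,V}(v^-;\omega)\geq \Lambda_1\|v^-\|_{L^p(\omega)}^p$, the hypothesis $\Lambda_1>0$ forces $v^-\equiv 0$, so $v\geq 0$ in $\omega$.

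For $(ii)\Rightarrow(iii)$, the weak maximum principle already yields $v\geq 0$ in $\omega$, so $v$ is a nonnegative supersolution of the homogeneous equation. Define the open set $E:=\{x\in\omega: v=0 \text{ a.e.\ on some neighborhood of } x\}$. For any $x_0\notin E$ and any ball $B_{2r}(x_0)\Subset\omega$, the weak Harnack inequality (Theorem~\ref{w_hrnck}) gives $\|v\|_{L^q(B_r(x_0))}\leq C\operatorname{ess\,inf}_{B_r(x_0)}v$; since the left side is strictly positive by the definition of $E$, we conclude $v>0$ a.e.\ on $B_r(x_0)$, hence $B_r(x_0)\subset\omega\setminus E$. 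Thus $\omega\setminus E$ is also open, and by connectedness of $\omega$ either $v\equiv 0$ or $v>0$ a.e.\ in $\omega$; the pointwise statement then follows from the continuous representative furnished by Corollary~\ref{mdlus_cont}.

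For $(iii)\Rightarrow(i)$, the Rayleigh-Ritz formula together with the standing hypothesis $\mathcal{Q}_{p,A,V}\geq 0$ in $\Omega$ already yields $\Lambda_1\geq 0$. Assume for contradiction that $\Lambda_1=0$ and let $\phi_1\in W_0^{1,p}(\omega)$ be the principal eigenfunction, which is positive in $\omega$ and has vanishing trace on $\partial\omega$. By the odd $p$-homogeneity of $Q_{p,A,V}$ one has $Q_{p,A,V}(-\phi_1)=-Q_{p,A,V}(\phi_1)=0$ in $\omega$ with zero trace on $\partial\omega$, so $-\phi_1$ is a solution of \eqref{mp_e} with $\mathcal{G}\equiv 0$. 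Applying SMP to $-\phi_1$ gives either $-\phi_1\equiv 0$ or $-\phi_1>0$ in $\omega$, both of which contradict $\phi_1>0$. Therefore $\Lambda_1>0$.

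The main technical obstacle I anticipate is in the step $(ii)\Rightarrow(iii)$: when $\mathcal{G}\not\equiv 0$ and $V$ lies only in the Wolff class, $v$ need not be H\"older (or even) continuous, so the dichotomy "$v\equiv 0$ or $v>0$" must be interpreted through a continuous representative where one is available, or else in the essentially pointwise (a.e.) sense. I would handle this by invoking Corollary~\ref{mdlus_cont} in the homogeneous subcase and by adopting the a.e.\ interpretation otherwise, consistently with the conventions used elsewhere in the paper.
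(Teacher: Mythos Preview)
Your proposal is correct and follows essentially the same route as the paper: test against $v^-$ for $(i)\Rightarrow(ii)$, invoke the weak Harnack inequality for $(ii)\Rightarrow(iii)$, and apply the strong maximum principle to $-\phi_1$ for $(iii)\Rightarrow(i)$. The only cosmetic differences are that the paper leaves the connectedness argument in $(ii)\Rightarrow(iii)$ implicit, and in $(iii)\Rightarrow(i)$ it assumes $\Lambda_1\leq 0$ directly (so that $-\phi_1$ solves \eqref{mp_e} with $\mathcal{G}=-\Lambda_1\phi_1^{p-1}\geq 0$) rather than first using the standing hypothesis $\mathcal{Q}\geq 0$ to reduce to the case $\Lambda_1=0$.
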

\begin{proof}
	$\mbox{(i)}\Rightarrow \mbox{(ii)}$: Let $v\in W^{1,p}(\omega)$ be a solution of \eqref{mp_e}. Then choosing $v^{-}\in W_0^{1,p}(\omega)$ as a test function in the weak formulation of \eqref{mp_e} we have
	$$\mathcal{Q}_{p,A,V}(v^{-}; \omega)=\int_{\{x\in\omega: v(x)<0\}}\mathcal{G}v\dx.$$
 Since $\mathcal{G}\geq 0$ it follows that  $\mathcal{Q}_{p,A,V}(v^{-}; \omega)\leq 0$. If $v^-\neq 0$, then  $\Lambda_1\leq 0$, which contradicts (i).  Thus, $v^{-}=0$ a.e. in $\omega$ and,  hence $v\geq 0$ a.e. in $\omega$.
 
 $\mbox{(ii)}\Rightarrow \mbox{(iii)}$: Since $\mathcal{G}\geq 0$, it follows that  $v$ is a nonnegative supersolution of $Q_{p,A,V}(u)=0$ in $\omega$. The weak Harnack inequality (Theorem~\ref{w_hrnck}) implies that $v>0$ in $\omega$.
 
  $\mbox{(iii)}\Rightarrow \mbox{(i)}$: Suppose that the principal eigenvalue $\Lambda_1\leq 0$ and let $v\in W_0^{1,p}(\omega)$ be its principal eigenfunction. Then $u:=-v$ is a supersolution of $Q_{p,A,V}(u)=0$ in $\omega$ with $u=0$ on $\partial\omega$. Since $u\neq 0$, the strong maximum principle implies that $u>0$ which is a contradiction. Thus, $\Lambda_1>0$.
\end{proof}
Next we prove the existence and uniqueness for the Dirichlet problem. 
\begin{theorem}\label{thm_unique}
	Assume that $A$ and $V$ satisfy Assumptions~\ref{assump}, and let $\omega \Subset \omega' \Subset \Omega$ be subdomains. Suppose that $\Lambda_1$ of $Q_{p,A,V}$ in $\gw$ is positive. Then for $0\leq g\!\in\! L^{p'}(\omega)$ there exists a unique positive solution in $W_0^{1,p}(\omega)$ of 
	\begin{align}
		\begin{cases}\label{ee3}
			Q_{p,A,V}(u)= g & \mbox{ in } \omega, \\
			u= 0 & \mbox{ on } \partial\omega.
		\end{cases}
	\end{align} 
\end{theorem}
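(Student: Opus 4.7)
The plan is to use the direct method of the calculus of variations for existence and a Picone-type identity (Lemma~\ref{pincone}) for uniqueness.

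\emph{Existence.} Consider the functional $I:W_0^{1,p}(\gw)\to\R$ defined by
$$I[u]:=\tfrac{1}{p}\,\mathcal{Q}_{p,A,V}(u;\gw)-\int_\gw g u\dx,$$
whose Euler--Lagrange equation is precisely \eqref{ee3}. The weak lower semicontinuity of $\mathcal{Q}_{p,A,V}$ on $W_0^{1,p}(\gw)$ follows verbatim from the argument of Lemma~\ref{lem_wlc_coer}(i), and the linear term is weakly continuous because $g\in L^{p'}(\gw)$. Coercivity is obtained by combining the Rayleigh--Ritz bound $\mathcal{Q}(u)\geq \Lambda_1\|u\|_{L^p(\gw)}^p$ (available because $\Lambda_1>0$) with the lower estimate $\mathcal{Q}(u)\geq (\Theta_\gw^{-1}-\delta)\|\nabla u\|_{L^p(\gw)}^p - C_\delta\|u\|_{L^p(\gw)}^p$ derived from local ellipticity of $A$ and Theorem~\ref{cor_1}; eliminating $\|u\|_{L^p}^p$ between the two yields $\mathcal{Q}(u)\geq c\|u\|_{W_0^{1,p}(\gw)}^p$ for some $c>0$, so $I$ is coercive. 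A minimizer $u\in W_0^{1,p}(\gw)$ therefore exists; since $\bigl|\nabla |u|\bigr|_A=|\nabla u|_A$ a.e.\ and $g\geq 0$, replacing $u$ by $|u|$ does not increase $I$, so one may take $u\geq 0$. The strong maximum principle (Theorem~\ref{mp}(iii)) then upgrades $u$ to $u>0$ in $\gw$ whenever $g\not\equiv 0$; if $g\equiv 0$, the Rayleigh bound forces $u\equiv 0$.

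\emph{Uniqueness.} Let $u_1, u_2\in W_0^{1,p}(\gw)$ be two positive solutions of \eqref{ee3}. Using (a suitably regularized form of) $u_1^p/u_2^{p-1}$ as a test function in the weak formulation of $Q_{p,A,V}(u_2)=g$ and invoking the identity \eqref{picone_eq2} gives
$$\int_\gw R_A(u_1,u_2)\dx = \mathcal{Q}_{p,A,V}(u_1;\gw) - \int_\gw g\,\frac{u_1^p}{u_2^{p-1}}\dx = \int_\gw g\Bigl(u_1-\frac{u_1^p}{u_2^{p-1}}\Bigr)\dx,$$
where I also used $\mathcal{Q}(u_1)=\int_\gw g u_1\dx$ (testing the equation for $u_1$ against itself). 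Adding the analogous identity with indices swapped and invoking Lemma~\ref{pincone},
$$0\leq \int_\gw\bigl(R_A(u_1,u_2)+R_A(u_2,u_1)\bigr)\dx = \int_\gw g\Bigl(u_1+u_2-\frac{u_1^p}{u_2^{p-1}}-\frac{u_2^p}{u_1^{p-1}}\Bigr)\dx.$$
The elementary inequality $a^p/b^{p-1}+b^p/a^{p-1}\geq a+b$ for $a,b>0$ and $p>1$ (which, after the substitution $t=a/b$, reduces to $t^p+t^{1-p}\geq t+1$, immediate from the convexity of $t\mapsto t^p+t^{1-p}-t-1$ on $(0,\infty)$ and the vanishing of this function together with its derivative at $t=1$), combined with $g\geq 0$, forces both sides of the displayed identity to vanish. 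Lemma~\ref{pincone} then gives $u_1=cu_2$ for some $c>0$, and substitution into $Q_{p,A,V}(u_i)=g$ combined with the $(p-1)$-homogeneity of $Q$ yields $c^{p-1}g=g$, hence $c=1$ when $g\not\equiv 0$.

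The main technical obstacle is justifying the admissibility of $u_1^p/u_2^{p-1}$ as a test function: since $u_1, u_2$ vanish on $\partial\gw$, the ratio $u_1/u_2$ need not belong to $L^\infty_{\loc}(\gw)$ up to the boundary. This will be handled by the standard D\'iaz--Saa regularization, using the difference quotient $\bigl((u_1+\epsilon)^p-(u_2+\epsilon)^p\bigr)/(u_2+\epsilon)^{p-1}$ (which lies in $W_0^{1,p}(\gw)$) in place of $u_1^p/u_2^{p-1}$, and then letting $\epsilon\to 0^+$ with the aid of Theorem~\ref{cor_3} to pass the Wolff-class term $V|u|^p$ to the limit.
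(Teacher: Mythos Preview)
Your proposal is correct and follows essentially the same route as the paper. Existence is obtained by the direct method exactly as in the paper (weak lower semicontinuity as in Lemma~\ref{lem_wlc_coer}, coercivity from $\Lambda_1>0$ combined with Theorem~\ref{cor_1}, then the strong maximum principle). For uniqueness, the paper tests with the same D\'iaz--Saa quotients $\bigl((u_i+h)^p-(u_j+h)^p\bigr)/(u_i+h)^{p-1}$ that you propose, adds the two resulting identities, and lets $h\to 0$ via dominated convergence on the $V$-term; the only cosmetic difference is that the paper invokes the quantitative convexity estimate \eqref{eq:strong-conv} (with the usual case split $p\gtrless 2$) to conclude $\nabla\log(u_1/u_2)=0$, whereas you invoke the equality case of Picone (Lemma~\ref{pincone}) together with the scalar inequality $t^p+t^{1-p}\geq t+1$, which is a slightly cleaner packaging of the same computation and avoids the case distinction.
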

\begin{proof}
	Consider the functional 
	$$J[u]:=\mathcal{Q}_{p,A,V}(u;\omega)-\int_\omega gu\dx \qquad u\in W_0^{1,p}(\omega).$$
	Using Theorem~\ref{cor_1},  and following the same steps as in Lemma~\ref{lem_wlc_coer} (i), we obtain that the functional $\mathcal{Q}_{p,A,V}$ is weakly lower semicontinuous in $W_0^{1,p}(\omega)$.  On the other hand,  since $g\in L^{p'}(\omega)$, the functional $I(u):=\int_\omega gu \dx$ is a bounded linear functional in $W_{0}^{1,p}(\omega)$. Therefore, the functional $J[u]$ is weakly lower semicontinuous in $W_0^{1,p}(\omega)$.
	
	We claim that the functional $J[u]$ is coercive in $W_0^{1,p}(\omega)$. Assume that $J[u]\leq t$. Since $\Lambda_1>0$ hence we have $$\Lambda_1\|u\|^p_{L^p(\omega)}\leq\mathcal{Q}_{p,A, V}(u;\omega)\quad\forall u\in W_0^{1,p}(\omega).$$ 
	Then by applying H\"older inequality, we get 
	$$\Lambda_1 \|u\|^p_{L^p(\omega)}\leq J[u]+\int_\omega gu\dx \leq t+\|g\|_{L^{p'}(\omega)}\|u\|_{L^p(\omega)}.$$
	This inequality implies that 
	\begin{equation}\label{ee1}
		\|u\|_{L^p(\omega)}\leq m:=\max \left\{1, \left(\frac{t+ \|g\|_{L^{p'}(\omega)}}{\Lambda_1}\right)^{1/{(p-1)}}\right\}.
	\end{equation}
	On the other hand, applying H\"{o}lder's inequality and Morrey-Adams theorem  (Theorem~\ref{cor_1})  in the inequality $J[u]\leq t$ we obtain
	\begin{align}
		\int_\omega |\nabla u|_A^p\dx &\leq t +\int_\omega g u\dx + \int_\omega |V| |u|^p\dx\nonumber\\
		& \leq t + \|g\|_{L^{p'}(\omega)}\|u\|_{L^p(\omega)}+ \delta \|\nabla u\|^p_{L^p(\omega;\mathbb{R}^d)}+C_\delta \|u\|^p_{L^p(\omega)},\label{ee2}
	\end{align}
where $C_\delta=C(d,p,\delta, \omega,\omega', \|V\|_{\mathfrak{W}^p(\omega')})$.	Then, \eqref{ee1}, \eqref{ee2} and  Assumptions~\ref{assump} imply 
$$\|\nabla u\|^p_{L^p(\omega;\mathbb{R}^d)}\leq (\Theta_{\gw}^{-p}- \delta)^{-1}(t + \|g\|_{L^{p'}(\omega)}m+C_\delta m^p),$$
for $\gd< \Theta_{\gw}$. This together with \eqref{ee1} implies that  $\|u\|_{W^{1,p}(\omega)}\leq C$, where the constant $C>0$ is independent of $u$. 

Thus, the functional $J[u]$ is weakly lower semicontinuous and coercive in $W_0^{1,p}(\omega)$. Consequently, the functional has a minimizer in $W_0^{1,p}(\omega)$ and the corresponding Dirichlet problem has a nonnegative solution  $v_1\in W_0^{1,p}(\omega)$. Since $\Lambda_1>0$ by the strong maximum principle (Theorem~\ref{mp}), the solution $v_1$ is either $0$ or positive in $\omega$. 

{\bf Uniqueness:} Let $v_i \in W_0^{1,p}(\omega)$, $i=1,2$, be two solutions of \eqref{ee3}. If $v_1=0$, then $g=0$ in $\omega$, and then $v_2$ is a principal eigenvalue with eigenvalue $0$, but  in light of the uniqueness of the principal eigenvalue, this contradicts the assumption that $\Lambda_1>0$. 

Thus, $v_i>0$ in $\omega$ for $i=1,2$.  Consider $v_{i,h}:=v_i+h$, where $h$ is a positive constant and $i=1,2$. Taking $\varphi_{1,h}:=\frac{v_{1,h}^p-v_{2,h}^p}{v_{1,h}^{p-1}}\in W_0^{1,p}(\omega)$ as a test function in the definition of $v_1$ being a solution of the equation \eqref{ee3}, we have 
 \begin{align*}
	\int_{\omega}\!(v_{1,h}^p\! - \! v_{2,h}^p)|\nabla \log v_{1,h}|^p_{A}\!\dx\! & -\! p\!\int_\omega \!\!v_{2,h}^p|\nabla \log v_{1,h}|^{p-2}_A \! A\nabla \!\log v_{1,h} \! \cdot \! 
	\nabla \!\log(\frac{v_{2,h}}{v_{1,h}})
	\!\dx\\
	&= \int_\omega\frac{(g-Vv_{1}^{p-1})(v_{1,h}^p-v_{2,h}^p)}{v_{1,h}^{p-1}}.
\end{align*}
Similarly, using $\varphi_{2,h}= \frac{v_{2,h}^p-v_{1,h}^p}{v_{2,h}^{p-1}}\in W_0^{1,p}(\omega)$ as a test function in the definition of $v_2$ being a solution of the Dirichlet problem \eqref{ee3}, we get 
\begin{align*}
	\int_{\omega}\!(v_{2,h}^p\!-\!v_{1,h}^p)|\nabla \!\log v_{2,h}|^p_{A}\!\dx & \!- \! p\!\int_\omega \!\!v_{1,h}^p|\nabla \log v_{2,h}|^{p-2}_A\!A\nabla \!\log v_{2,h} \!\cdot\! \nabla \!\log(\frac{v_{1,h}}{v_{2,h}})\! \dx\\
	&= \int_\omega\frac{(g-Vv_{2}^{p-1})(v_{2,h}^p-v_{1,h}^p)}{v_{2,h}^{p-1}}.
\end{align*}
Adding the above two equations, we reach at
\begin{align*}
	& \int_\omega \!v_{1,h}^p\!\left(\! |\nabla \!\log\! v_{1,h}|^p_{A} \!-\! |\nabla \!\log\! v_{2,h}|^p_{A} \!- \! p|\nabla \log v_{2,h}|^{p-2}_A\!A\nabla\! \log v_{2,h} \!\cdot \! \nabla \! \log(\frac{v_{1,h}}{v_{2,h}})\!\right) \!\!\dx\\
	&+\!\!\int_\omega \!\! v_{2,h}^p\!\!\left(\!|\nabla \!\log\! v_{2,h}|^p_{A}\!-\! |\nabla \!\log\! v_{1,h}|^p_{A} \!-\!p\nabla \!\log v_{1,h}|^{p-2}_A\!A\nabla\! \log\! v_{1,h} \!\cdot\! \nabla\! \log(\frac{v_{2,h}}{v_{1,h}})\!\right)\!\!\dx\\
	&\,\,\,\,\,=\int_\omega \left[g\left(\frac{1}{v_{1,h}^{p-1}}-\frac{1}{v_{2,h}^{p-1}}\right)-V\left(\frac{v_1^{p-1}}{v_{1,h}^{p-1}}-\frac{v_2^{p-1}}{v_{2,h}^{p-1}}\right)\right](v_{1,h}^p-v_{2,h}^p)\dx
\end{align*}
Using the convexity inequality \eqref{eq:strong-conv}
we arrive at
\begin{align*}
	\!&\int_\omega \left[g\left(\frac{1}{v_{1,h}^{p-1}}-\frac{1}{v_{2,h}^{p-1}}\right)-V\left(\frac{v_1^{p-1}}{v_{1,h}^{p-1}}-\frac{v_2^{p-1}}{v_{2,h}^{p-1}}\right)\right](v_{1,h}^p-v_{2,h}^p)\dx \geq  C(p) \,\times \nonumber	\\[2mm]
	\!&  \int_\omega \!\!\!\dx \!\begin{cases}\!
		\!(v_{1,h}^p \!+ \! v_{2,h}^p)|\nabla \log(\frac{v_{1,h}}{v_{2,h}})|_A^p \qquad \qquad  p\in [2,\infty), \\[2mm]
		\! (v_{1,h}^p\! +\!v_{2,h}^p)|\nabla \!\log(\frac{v_{1,h}}{v_{2,h}})|_A^2\!\left(|\nabla\! \log\! v_{1,h}|_A \! + \! |\nabla\! \log\! v_{2,h}|_A\!\right)^{p-2}  \, \ p \! \in \! (1,2).
	\end{cases}
\end{align*}
This implies that 
$$0\!\geq \!\int_\omega \!\!g\left(\frac{1}{v_{1,h}^{p-1}}-\frac{1}{v_{2,h}^{p-1}}\right)(v_{1,h}^p-v_{2,h}^p)\!\dx\geq 
\!\!\int_\omega \! V \!\!\left(\frac{v_1^{p-1}}{v_{1,h}^{p-1}}-\frac{v_2^{p-1}}{v_{2,h}^{p-1}}\right)(v_{1,h}^p-v_{2,h}^p) \!\dx. $$
Since $v_{i,h}\rightarrow v_i$ as $h\rightarrow 0$ for $i=1,2$, it follows that the integrand of the  integral of the right hand side  above converges to $0$ a.e. in $\omega$. Also, 
$$\left|V \!\!\left(\frac{v_1^{p-1}}{v_{1,h}^{p-1}}-\frac{v_2^{p-1}}{v_{2,h}^{p-1}}\right)(v_{1,h}^p-v_{2,h}^p)\right |\leq 2 |V|(\left(v_1+1)^p+(v_2+1)^p\right)\in L^1(\omega).$$
Therefore, by the dominated converges theorem, it follows that 
$$\lim_{h\rightarrow 0}\int_\omega g\left(\frac{1}{v_{1,h}^{p-1}}-\frac{1}{v_{2,h}^{p-1}}\right)(v_{1,h}^p-v_{2,h}^p)\dx=0.$$
This together with the Fatou lemma implies that 
$$0=\int_\omega \!\!\!\dx \!\begin{cases}\!
\!(v_{1}^p \!+ \! v_{2}^p)|\nabla \log(\frac{v_{1}}{v_{2}})|_A^p &  p\in [2,\infty), \\[2mm]
\! (v_{1}^p\! +\!v_{2}^p)|\nabla \!\log(\frac{v_{1}}{v_{2}})|_A^2\!\left(|\nabla\! \log\! v_{1}|_A \! + \! |\nabla\! \log\! v_{2}|_A\!\right)^{p-2}  & \ p \! \in \! (1,2).
\end{cases}$$
Thus, $v_1=v_2$ a.e. in $\omega$.
\end{proof}
\subsection{Agmon-Allegretto-Piepenbrink-type theorem}
\hspace{-6pt} The next result, known as Agmon-Allegretto-Piepenbrink (AAP)-type theorem, asserts that the nonnegativity of the functional $\mathcal{Q}_{p,A,V}$ on $C_c^\infty(\Omega)$ is equivalent to the existence of a weak positive solution or supersolution of $Q_{p,A,V}(u)=0$ in $\Omega$. This result generalizes the AAP Theorem $4.3$ obtained in \cite{pinchover_psaradakis} for a potential $V\in M^q_{\loc}(p; \Omega)$ (see \cite{pinchover_psaradakis} for the history of AAP theorem).
\begin{theorem}[AAP-type theorem] \label{aap_thm} Suppose that  $A$ and $V$ satisfy Assumptions~\ref{assump}. Then the following assertions are equivalent:
	\begin{itemize}
		\item[(i)] $\mathcal{Q}_{p,A,V}\geq 0$ in $\Omega$;
		\item[(ii)] the equation $Q_{p,A,V}(u)=0$ in $\Omega$ admits a positive solution;
		\item[(iii)] the equation $Q_{p,A,V}(u)=0$ in $\Omega$ admits a positive supersolution.
	\end{itemize}	
\end{theorem}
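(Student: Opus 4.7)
The plan is to prove the cyclic chain $(ii)\Rightarrow(iii)\Rightarrow(i)\Rightarrow(ii)$. The first implication is immediate, since a positive solution is a fortiori a positive supersolution, so no work is needed there.

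For $(iii)\Rightarrow(i)$, I would argue via the Picone identity (Lemma~\ref{pincone}). Let $v\in W^{1,p}_{\loc}(\Omega)$ be a positive supersolution of $Q_{p,A,V}(u)=0$, fix $\varphi\in C_c^\infty(\Omega)$, and set $v_\eps:=v+\eps$ for $\eps>0$. The function $\psi_\eps:=|\varphi|^p/v_\eps^{p-1}$ is a legitimate nonnegative test function in $W^{1,p}_c(\Omega)$ since $v_\eps$ is bounded away from zero on $\supp\varphi$ (use local continuity, Corollary~\ref{mdlus_cont}, to ensure $v$ is bounded on the support of $\varphi$). Testing the supersolution inequality against $\psi_\eps$ gives
\begin{equation*}
\int_\Omega |\nabla v|_A^{p-2}A\nabla v\cdot \nabla\Bigl(\tfrac{|\varphi|^p}{v_\eps^{p-1}}\Bigr)\dx + \int_\Omega V\,v^{p-1}\tfrac{|\varphi|^p}{v_\eps^{p-1}}\dx \ge 0.
\end{equation*}
Apply the Picone identity (with $u=|\varphi|$ and $v_\eps$ in place of $v$) in the form $R_A(|\varphi|,v_\eps)\ge 0$, namely
\begin{equation*}
\nabla\Bigl(\tfrac{|\varphi|^p}{v_\eps^{p-1}}\Bigr)\cdot A\nabla v_\eps\,|\nabla v_\eps|_A^{p-2} \le |\nabla \varphi|_A^p\qquad \text{a.e.\ in }\Omega.
\end{equation*}
Integrating and combining the two displays, and letting $\eps\to 0^+$ via dominated convergence (justified on the compact set $\supp\varphi$ where $v$ is bounded and positive), one obtains $\mathcal Q_{p,A,V}(\varphi)\ge 0$. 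A mild technical point to check is the admissibility of $\psi_\eps$ as a test function in $W_0^{1,p}$ on a neighbourhood of $\supp\varphi$; this uses that $v\in W^{1,p}_{\loc}$, $v$ is continuous and strictly positive on compacts (by the strong maximum principle implicit in Theorem~\ref{w_hrnck} applied to $v$), and $|\nabla(|\varphi|^p/v_\eps^{p-1})|\in L^p_{\loc}$.

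For the crux, $(i)\Rightarrow(ii)$, I would combine the existence theory for the Dirichlet problem (Theorem~\ref{thm_unique}) with the Harnack convergence principle (Theorem~\ref{hrnck_principle}). Fix a compact exhaustion $\{\Omega_i\}$ of $\Omega$ with a reference point $x_0\in\Omega_1$, and for each $i\ge 2$ pick a nonnegative $g_i\in C_c^\infty(\Omega_i\setminus \overline{\Omega_{i-1}})$ with $g_i\not\equiv 0$. By hypothesis $\mathcal Q_{p,A,V}\ge 0$ in $\Omega$, so Theorem~\ref{cn_positivity} gives $\Lambda_1(Q_{p,A,V};\Omega_i)>0$ for every $i$; hence by Theorem~\ref{thm_unique} there exists a unique positive solution $w_i\in W_0^{1,p}(\Omega_i)$ of
\begin{equation*}
Q_{p,A,V}(w_i)=g_i \quad \text{in }\Omega_i,\qquad w_i=0 \quad\text{on }\partial\Omega_i.
\end{equation*}
Normalize by setting $v_i:=w_i/w_i(x_0)$, which is well defined (and positive at $x_0$) by the strong maximum principle (Theorem~\ref{mp}) and continuity (Corollary~\ref{mdlus_cont}). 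Then $v_i$ is a positive weak solution of the homogeneous equation $Q_{p,A,V}(u)=0$ on the open set $\Omega_i\setminus \supp g_i$, which contains $\Omega_{i-1}$; thus, for any fixed $k$, $v_i$ solves $Q_{p,A,V}(u)=0$ in $\Omega_k$ for all $i\ge k+1$, with the uniform normalization $v_i(x_0)=1$.

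Now I apply the Harnack convergence principle (Theorem~\ref{hrnck_principle}) with the stationary choices $A_i\equiv A$ and $V_i\equiv V$ on the exhaustion $\{\Omega_k\}_{k\ge 1}$ (the hypotheses on convergence of the coefficients are automatic in the stationary setting). The conclusion is that, along a subsequence, $v_i\to v$ locally uniformly and weakly in $W^{1,p}_{\loc}(\Omega)$, where $v$ is a positive weak solution of $Q_{p,A,V}(u)=0$ in the whole of $\Omega$ with $v(x_0)=1$. This yields (ii). The main obstacle is precisely this last construction: one must verify that the solutions $v_i$ of the perturbed problems really do solve the homogeneous equation on each fixed $\Omega_k$ for $i$ large (guaranteed by the choice $\supp g_i\subset \Omega_i\setminus \overline{\Omega_{i-1}}$), and that the Harnack convergence principle can legitimately be invoked—which it can, since the hypotheses on $V_i=V$ and $A_i=A$ are trivially satisfied and the uniform bound $v_i(x_0)=1$ triggers the local Harnack inequality (Theorem~\ref{thm_hrnck}) to produce both the local uniform bounds and the modulus of continuity needed for compactness.
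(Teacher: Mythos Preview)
Your proof is correct and follows essentially the same route as the paper. Two minor variations worth noting: for $(i)\Rightarrow(ii)$ the paper perturbs the potential to $V+1/i$ so that $\Lambda_1(\omega_i)\ge 1/i$ is immediate, whereas you invoke the strict domain monotonicity of the principal eigenvalue (Theorem~\ref{cn_positivity}) to get $\Lambda_1(\Omega_i)>0$ directly---your version is a bit cleaner and avoids varying the coefficients in the Harnack convergence principle. For $(iii)\Rightarrow(i)$ the paper plugs $|\varphi|^p\tilde v^{1-p}$ directly (using only the weak Harnack inequality to bound $1/\tilde v$) and closes with Young's inequality, while you use the $\eps$-shift $v_\eps=v+\eps$ and Picone; your reference to Corollary~\ref{mdlus_cont} is misplaced (that result is for \emph{solutions}, and a supersolution need not be continuous), but this is harmless since the only estimate you actually use is $1/v_\eps\le 1/\eps$ together with $\nabla v\in L^p_{\loc}$, and the limit $\eps\to0$ is controlled by $|V||\varphi|^p\in L^1$ and $v^{p-1}/v_\eps^{p-1}\le 1$.
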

\begin{proof}
	$\mbox{(i)}\Rightarrow\mbox{(ii)}$: Fix a point $x_0\in \Omega$ and consider a sequence $\{\omega_i\}_{i\in\mathbb{N}}$ of Lipschitz compact exhaustion $\cup_{i \in\mathbb{N}}\omega_i=\Omega$ such that $x_0\in \omega_1$, and consider the following Dirichlet problem for $i\geq 2$
	\begin{align}\label{aap_e}
		\begin{split}
			Q_{p,A,V+1/i}(u)&=f_i \quad \mbox{ in } \omega_i,\\
			u&=0\quad \mbox{ on } \partial\omega_i,
		\end{split}
	\end{align}
where $f_i\in C_c^\infty(\omega_i\setminus\bar{\omega}_{i-1})\setminus\{0\}$ and $f_i\geq 0$. By  (i), we have 
$$\Lambda_1=\inf_{u\in W_0^{1,p}(\omega_i)\setminus\{0\}}\frac{\mathcal{Q}_{p,A,V+1/i}(u;\omega_i)}{\|u\|^p_{L^p(\omega_i)}}\geq \frac{1}{i}\quad \mbox{for all } i\geq 1.$$
This together with Theorem~\ref{thm_unique} implies that the Dirichlet problem \eqref{aap_e} admits a positive solution $v_i\in W_0^{1,p}(\omega_i)$. Set  $\omega'_i=\omega_{i-1}$. As  $\supp(f_i)\subset \omega_i\setminus\bar{\omega'}_{i}$,  we get for $i\geq 2$
$$\int_{\omega'_i}|\nabla v_i|_A A\nabla v_i\cdot \nabla u\dx+ \int_{\omega'_i}\left(V+\frac{1}{i}\right)v_i^{p-1}u\dx=0 \quad \forall u\in W_0^{1,p}(\omega'_i).$$
By Theorem~\ref{mdlus_cont},  $v_i\in C(\omega'_i)$ for all $i\geq 2$. Normalize $f_i$ to get $v_i(x_0)=1$ for all $i\geq 2$. The Harnack convergence principle (Theorem~\ref{hrnck_principle}) with  $A_i=A$ and $V_i=V+{1}/{i}$, implies that, up to a subsequence, $v_i\to v$, where $v$ is a positive solution of $Q_{p,A,V}(u)=0$ in $\Gw$.

$\mbox{(ii)}\Rightarrow\mbox{(iii)}$ is trivial.

$\mbox{(iii)}\Rightarrow\mbox{(i)}$: Let $\tilde{v}\in W_{\loc}^{1,p}(\Omega)$ be a positive supersolution of  the equation $Q_{p,A,V}(u)=0$ in $\Omega$. The weak Harnack inequality (Theorem~\ref{w_hrnck}) implies that $1/\tilde{v}\in L^\infty_{\loc}(\Omega)$. For any $\varphi\in C_c^\infty(\Omega)$, choose $|\varphi|^p\tilde{v}^{1-p}\in W_0^{1,p}(\Omega)$ as a test function and denote $T:=-|\nabla\log \tilde{v}|_A^{p-2}\nabla \log\tilde{v}$, then
$$(p-1)\int_{\Omega}|T|_A^{p'}|\varphi|^p \dx\leq p\int_\Omega |T|_A|\varphi|^{p-1}|\nabla \varphi|_A\dx + \int_\Omega V |\varphi|^p \dx.$$
 Using the Young's inequality: $pab\leq (p-1)a^{p'}+b^p$ with $a=|T|_A|\varphi|^{p-1}$ and $b=|\nabla \varphi|_A$, we obtain  that $\mathcal{Q}_{p,A,V}(\varphi)\geq 0$ for all $\varphi\in C_c^\infty(\Omega)$.
\end{proof}

%\begin{definition}[Subcritical]
%Assume that $\mathcal{Q}_{p,A,V}(\varphi)\geq 0$ for all $\varphi\in C_c^\infty(\Omega)$. Then the operator $\mathcal{Q}_{p,A,V}$ is said to be subcritical in $\Omega$ if there exists a nonnegative weight function $W\in \mathfrak{W}^p_{\loc}(\Omega)\setminus\{0\}$ such that 
%$$\mathcal{Q}_{p,A,V}(\varphi)\geq \int_{\Omega}W|\varphi|^p\dx\quad \forall\, \varphi\in C_c^\infty(\Omega).$$
%Otherwise, we say the operator $\mathcal{Q}_{p,A,V}$ is critical.
%\end{definition}
\subsection{Null-sequence and ground state}
In this subsection we recall the notion of a null-sequence and a ground state corresponding to the functional $\mathcal{Q}_{p,A,V}$. We show that if $\mathcal{Q}_{p,A,V}$ admits a ground state $\Phi$ in $\Gw$, then  $\Phi$ is the unique (up to a multiplicative constant) positive supersolution of ${Q}_{p,A,V}$ in $\Omega$. Moreover, in a bounded domain, the positive principal eigenfunction is in fact a ground sate if $A$ and $V$ satisfy assumptions~\ref{assump}, and \ref{assump_strong}.
\begin{definition}[Null-sequence]{\em 
	Assume that $\mathcal{Q}_{p,A,V}\geq 0$ in $\Omega$. A sequence $\{u_n\}\subset W_c^{1,p}(\Omega)$ is said to be a {\em null-sequence of $\mathcal{Q}_{p,A,V}$ in} $\Omega$ if 
	\begin{itemize}
	\item[(a)] $u_n\geq 0$ for all $n\in\mathbb{N}$;
	\item[(b)] there exists an open set $K\!\Subset\!\Omega$ such that $\|u_n\|_{L^p(K)}\!=\!1$ for all $n\!\in\!\mathbb{N}$;
	\item[(c)] $\displaystyle{\lim_{n\to\infty} \mathcal{Q}_{p,A,V}(u_n)=0}$.
	\end{itemize}
}\end{definition}
\begin{definition}[Ground state]{\em 
	A positive $\Phi\in W^{1,p}_{\loc}(\Omega)$ is  an {\em (Agmon) ground state of $\mathcal{Q}_{p,A,V}$ in $\Omega$} if $\Phi$ is an $L^p_{\loc}(\Omega)$ limit of a null-sequence.
}\end{definition}
\begin{lem}\label{lem_2}
Assume that $\mathcal{Q}_{p,A,V}  \!\geq\! 0$ in $\Omega$, where $A$ and $V$ satisfy Assumptions~\ref{assump}. If $p<2$ assume further that Assumption \ref{assump_strong} is satisfied. Suppose  that  $\mathcal{Q}_{p,A,V}$ admits  a null-sequence $\{u_n\}\subset W_c^{1,p}(\Omega)$  in $\Omega$. Let $v$ be a positive supersolution of $Q_{p,A,V}(u)\!=\!0$ in $\Omega$, and consider the sequence $\{w_n\!:=\! u_n/v\}$. Then $\{w_n\}$ is bounded in $W_{\loc}^{1,p}(\Omega)$, and $\nabla w_n\rightarrow 0$ in $L^p_{\loc}(\Omega; \mathbb{R}^d)$.
\end{lem}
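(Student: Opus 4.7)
The plan is to combine the Picone identity of Lemma~\ref{pincone} with the defining supersolution inequality for $v$ so as to dominate the nonnegative integral $\int_\Omega L_A(u_n,v)\,dx$ by $\mathcal{Q}_{p,A,V}(u_n)$, and then invoke strong convexity of the map $\xi\mapsto|\xi|_A^p$ to convert the resulting integral bound into pointwise $L^p_{\loc}$-control of $\nabla w_n$.

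First, since $u_n\in W_c^{1,p}(\Omega)$ is nonnegative and, by the weak Harnack inequality (Theorem~\ref{w_hrnck}), $1/v\in L^\infty_{\loc}(\Omega)$, the function $\varphi_n:=u_n^p/v^{p-1}$ is a valid nonnegative compactly supported element of $W^{1,p}(\Omega)$. Substituting $\varphi_n$ in the weak supersolution inequality for $v$ yields
\[
\int_\Omega \nabla\!\left(\frac{u_n^p}{v^{p-1}}\right)\cdot A\nabla v\,|\nabla v|_A^{p-2}\,dx \;\geq\; -\int_\Omega V u_n^p\,dx.
\]
Inserting this into the definition of $R_A(u_n,v)$ in~\eqref{picone_eq2} and using the identity $R_A=L_A\geq 0$ from Lemma~\ref{pincone}, I would obtain
\[
0\;\leq\;\int_\Omega L_A(u_n,v)\,dx \;\leq\; \int_\Omega |\nabla u_n|_A^p\,dx+\int_\Omega V u_n^p\,dx \;=\;\mathcal{Q}_{p,A,V}(u_n)\;\longrightarrow 0.
\]

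Next, writing $u_n=w_n v$ and applying the strong convexity inequality~\eqref{eq:strong-conv} with $\xi=\nabla u_n$ and $\eta=w_n\nabla v$ (so that $\xi-\eta=v\nabla w_n$), the bracket $[\nabla u_n,w_n\nabla v]_{p,A}$ equals $|v\nabla w_n|_A^p$ when $p\geq 2$. For $1<p<2$ one invokes Assumption~\ref{assump_strong} instead, obtaining a clean lower bound $C\,|v\nabla w_n|_A^{\bar p}$ in place of the degenerate expression $|v\nabla w_n|_A^2(|\nabla u_n|_A+|w_n\nabla v|_A)^{p-2}$ coming from~\eqref{eq:strong-conv}. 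Either way,
\[
L_A(u_n,v)\;\geq\; C\,v^{\bar p}|\nabla w_n|_A^{\bar p}\quad\text{a.e.\ in } \Omega,
\]
with $\bar p\geq p$. On any compact $K\Subset\Omega$, where $v$ admits uniform positive bounds (again by the weak Harnack inequality) and $A$ is uniformly elliptic, this forces $\int_K|\nabla w_n|^{\bar p}\,dx\to 0$; since $K$ is bounded and $\bar p\geq p$, H\"older's inequality converts this into $\nabla w_n\to 0$ in $L^p(K;\mathbb{R}^d)$, which is the second assertion.

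For the first assertion, observe that on the fixed compact set $K_0\Subset\Omega$ appearing in the definition of the null-sequence, the bound $v\geq c_{K_0}>0$ yields $\|w_n\|_{L^p(K_0)}\leq c_{K_0}^{-1}\|u_n\|_{L^p(K_0)}=c_{K_0}^{-1}$ uniformly in $n$. For an arbitrary compact $K\Subset\Omega$, I would enlarge $K$ to a connected bounded Lipschitz subdomain containing $K_0$ and apply a Poincar\'e-type inequality of the form $\|w_n\|_{L^p(K)}\leq C(\|\nabla w_n\|_{L^p(K;\mathbb{R}^d)}+\|w_n\|_{L^p(K_0)})$; together with the $L^p_{\loc}$ convergence of the gradients just established, this gives uniform $W^{1,p}(K)$ boundedness of $\{w_n\}$. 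The main technical subtlety is the regime $1<p<2$, where the pointwise lower bound on $L_A$ coming from the standard convexity inequality~\eqref{eq:strong-conv} is degenerate; Assumption~\ref{assump_strong} is precisely what restores a usable pointwise lower bound on $L_A$ in that range.
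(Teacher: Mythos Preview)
Your proposal is correct and follows essentially the same route as the paper. The paper does not name the Picone identity explicitly but re-derives the inequality $\int_\Omega L_A(u_n,v)\,dx\le \mathcal{Q}_{p,A,V}(u_n)$ inline by testing the supersolution with $w_n^p v = u_n^p/v^{p-1}$ and invoking the convexity bound~\eqref{eq:strong-conv} (respectively Assumption~\ref{assump_strong} for $p<2$); it also places the Poincar\'e step first rather than last, writing $\|w_n\|_{L^p(\omega)}\le C\|\nabla w_n\|_{L^p(\omega)}+\langle w_n\rangle_K|\omega|^{1/p}$ and bounding the mean via $\inf_K v$, which is equivalent to your Poincar\'e-type inequality on a connected Lipschitz subdomain containing $K_0$.
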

\begin{proof}
	Fix $K\Subset\Omega$ such that $\|u_n\|_{L^p(K)}=1$ for all $n\in\mathbb{N}$, and let $K\Subset \omega\Subset \Omega$ be  a Lipschitz subdomain. By the Minkowski inequality we have
	$$\|w_n\|_{L^p(\omega)} \leq \|w_n-\langle w_n\rangle_K\|_{L^p(\omega)}+ \langle w_n\rangle_K(\mathcal{L}^d(\omega))^{1/p},$$
	where $\langle w_n\rangle_K=\frac{1}{|K|}\int_K w_n(x) \dx$. Applying the  Poincar\'e inequality to the first term, and the weak Harnack inequality (Theorem~\ref{w_hrnck}) to the second term of the right hand side of the above inequality, we have
	\begin{align}
		\|w_n\|_{L^p(\omega)} &\leq C(d,p, K, \omega) \|\nabla w_n\|_{L^p(\omega;\mathbb{R}^d)} + \frac{1}{\inf_K v}\langle u_n\rangle_K(\mathcal{L}^d(\omega))^{1/p}\nonumber\\
		& \leq C(d,p, K, \omega) \|\nabla w_n\|_{L^p(\omega;\mathbb{R}^d)} + \frac{1}{\inf_K v} \left(\frac{\mathcal{L}^d(\omega)}{\mathcal{L}^d(K)}\right)^{1/p},\label{lemeq_1}
	\end{align}
where in the  second term of the latter inequality, we used the H\"older inequality, and $\|u_n\|_{L^p(K)}=1$.
%%%%%%%%%%%%%%%%%%%%%%%%%%%
Let
\[I(v,w_k):=\left\{
\begin{array}{ll}
C(\bar{p})\displaystyle{\int_\Gw v^{\bar{p}}|\nabla w_k|_A^{\bar{p}}\dx} & 1\leq p <2, \\[3mm]
C(p)\displaystyle{\int_\Gw v^p|\nabla w_k|_A^p\dx} & p\geq 2,
\end{array}
\right.
\]
where $\bar{p} \geq p$, and $C(p)$ and $C(\bar{p})$ are  the constants in \eqref{eq_pbar} and \eqref{eq:strong-conv}, respectively. Using \eqref{eq_pbar} and \eqref{eq:strong-conv} to obtain
\bea
I(v,w_k)
&  \leq &  
\int_\Gw |\nabla u_k|_A^p\dx-\int_\Gw  w_k^p|\nabla v|_A^p\dx-\int_\Gw  v|\nabla v|_A^{p-2}A\nabla v \!\cdot\! \nabla (w_k^p)\dx \nonumber \\  
& = & %\hspace{3.8em} =
\int_\Gw |\nabla u_k|_A^p\dx-\int_\Gw  |\nabla v|_A^{p-2}A\nabla v \!\cdot \! \nabla(w_k^pv)\dx.
\eea
Since $v$ is a positive supersolution, we get
\be\label{common:geq2}
I(v,w_k)
\leq
\int_\Gw |\nabla u_k|_A^p\dx
+
\int_\Gw  Vu_k^p\dx = Q_{A,p,V}[u_k].
\ee
Using the definition of $I$, the weak Harnack inequality, (and for $p\leq 2$ use also H\"older inequality), we obtain from \eqref{common:geq2} that
\be\label{nablawk0pgeq2}
c_\gw\!\int_\gw \!|\nabla w_k|^{p}\!\dx \!\leq\! C(p) \int_\Gw  v^{p}|\nabla w_k|^{p}_{A}\!\dx\!
\leq\!
\mathcal{Q}_{A,p,V}[u_k]
\rightarrow 0   \mbox{ as }k\rightarrow\infty,
\ee
where $c_\gw>0$ is a positive constant. By the weak compactness of $W^{1,p}(\gw)$, we infer that (up to a subsequence)
\be\label{LpwkGradestimate}
\nabla w_k\rightarrow0\qquad \mbox{in }L^p_{\rm loc}(\Gw;\R^d).
\ee
By \eqref{lemeq_1} and \eqref{nablawk0pgeq2}, we have that $w_k$ is bounded in $W_{\rm loc}^{1,p}(\gw)$.
\end{proof}
\begin{remark}\label{rem_p_less_}{\em 
In Lemma~\ref{lem_2}, for $p < 2$ we may replace Assumption~\ref{assump_strong}  with the assumption that the gradient of the positive supersolution $v$ is locally bounded (see the proof of Proposition 4.11 in \cite{pinchover_psaradakis}). }
\end{remark}
As a consequence of this lemma, we have the following uniqueness result:
\begin{theorem}\label{ground_st_thm}
	Assume that $\mathcal{Q}_{p,A,V}\!\geq\! 0$ in $\Omega$, where $A$ and $V$ satisfy Assumptions~\ref{assump},  and if $p<2$ assume further that Assumption~\ref{assump_strong} is satisfied. Then any null-sequence of the functional $\mathcal{Q}_{p,A,V}$ converges, both in $L^p_{\loc}(\Omega)$ and a.e. in $\Omega$ to a unique (up to a multiplicative constant) positive supersolution (which is in fact a solution) of the equation $Q_{p,A,V}(u)\!=\!0$ in $\Omega$.
\end{theorem}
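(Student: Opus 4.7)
The plan is to combine the AAP--type theorem with Lemma~\ref{lem_2} and a Rellich--Kondrachov compactness argument, using a positive solution as a pivot. First, by Theorem~\ref{aap_thm}, the hypothesis $\mathcal{Q}_{p,A,V}\geq 0$ in $\Omega$ furnishes a positive solution $v\in W^{1,p}_{\loc}(\Omega)$ of $Q_{p,A,V}(u)=0$ in $\Omega$. When $p<2$, either Assumption~\ref{assump_strong} is in force, or, in view of Remark~\ref{rem_p_less_}, we select $v$ so that $\nabla v$ is locally bounded; in either case the hypotheses of Lemma~\ref{lem_2} are satisfied with this $v$.

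Next, given a null--sequence $\{u_n\}\subset W_c^{1,p}(\Omega)$ with $\|u_n\|_{L^p(K)}=1$ for some $K\Subset\Omega$ and $\mathcal{Q}_{p,A,V}(u_n)\to 0$, set $w_n:=u_n/v$. Lemma~\ref{lem_2} gives that $\{w_n\}$ is bounded in $W^{1,p}_{\loc}(\Omega)$ and $\nabla w_n\to 0$ in $L^p_{\loc}(\Omega;\mathbb{R}^d)$. Fixing a Lipschitz compact exhaustion $\{\omega_j\}$ of $\Omega$ with $K\Subset\omega_1$, Rellich--Kondrachov applied on each $\omega_j$ extracts a subsequence that converges strongly in $L^p(\omega_j)$ and a.e.\ in $\omega_j$ to some $w^{(j)}\in W^{1,p}(\omega_j)$; the gradient bound forces $\nabla w^{(j)}=0$ a.e., so $w^{(j)}$ is constant on each connected component of $\omega_j$. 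A standard diagonal extraction together with the connectedness of $\Omega$ yields a subsequence (still denoted $\{w_n\}$) and a single constant $c\geq 0$ with $w_n\to c$ in $L^p_{\loc}(\Omega)$ and a.e.\ in $\Omega$.

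The constant $c$ is then identified via the normalization: since $w_n\to c$ in $L^p(K)$ and $v>0$ on $K$, passing to the limit in $\int_K w_n^p v^p\,dx=\|u_n\|_{L^p(K)}^p=1$ gives
\[
c^p\int_K v^p\,dx=1,\qquad \text{hence}\qquad c=\Bigl(\int_K v^p\,dx\Bigr)^{-1/p}>0.
\]
Because this value of $c$ is independent of the chosen subsequence, a standard subsequence argument shows that in fact the \emph{full} sequence $u_n=w_n v$ converges in $L^p_{\loc}(\Omega)$ (and a.e.\ in $\Omega$ along a subsequence) to $\Phi:=cv$, which is a positive solution of $Q_{p,A,V}(u)=0$ in $\Omega$. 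For uniqueness, let $\tilde v$ be any positive supersolution of $Q_{p,A,V}(u)=0$ in $\Omega$. Applying Lemma~\ref{lem_2} to $\tilde v$ in place of $v$ and repeating the Rellich compactness/diagonal argument above, one obtains $u_n/\tilde v\to \tilde c$ a.e.\ along a subsequence, for some constant $\tilde c\geq 0$. Combining with $u_n\to \Phi$ a.e.\ forces $\tilde c\,\tilde v=\Phi$ a.e., so $\tilde c>0$ and $\tilde v=\Phi/\tilde c$; in particular every positive supersolution is a positive constant multiple of $\Phi$ and is therefore itself a solution.

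The technical heart of the argument is absorbed into Lemma~\ref{lem_2}; the main remaining point of care is the passage from locally constant weak limits to a \emph{single} global constant $c$ via the connectedness of $\Omega$ and a diagonal subsequence, together with the verification that $c>0$, which rests delicately on the normalization $\|u_n\|_{L^p(K)}=1$ and on the strict positivity of $v$ on $K$ supplied by the weak Harnack inequality (Theorem~\ref{w_hrnck}). The uniqueness step is then essentially ``free,'' since Lemma~\ref{lem_2} applies to any positive supersolution and the same compactness machinery produces the required comparison constant.
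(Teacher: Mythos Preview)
Your proof is correct and follows essentially the same approach as the paper: invoke the AAP theorem to produce a positive (super)solution, apply Lemma~\ref{lem_2} to the quotients $w_n=u_n/v$, use Rellich--Kondrachov to extract a constant limit, identify the constant via the normalization on $K$, and deduce uniqueness by running the same argument against an arbitrary positive supersolution. Your write-up is in fact slightly more careful than the paper's in spelling out the diagonal extraction and the role of the connectedness of $\Omega$ in forcing a single global constant.
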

\begin{proof}
	By the AAP theorem (Theorem~\ref{aap_thm}), the equation $Q_{p,A, V}(u)=0$ in $\Omega$ admits a positive solution $v\in W_{\loc}^{1,p}(\Omega)$ and a positive supersolution $\tilde{v}\in W_{\loc}^{1,p}(\Omega)$. Suppose that $\{u_n\}$ is a null-sequence of the functional $\mathcal{Q}_{p,A,V}$ in $\Omega$. Consider $w_n:=u_n/\tilde{v}$. Lemma~\ref{lem_2} implies that $\nabla w_n\rightarrow 0$ in $L^p_{\loc}(\Omega; \mathbb{R}^d)$. Then the Rellich-Kondrachov theorem implies that up to a subsequence (still denoted by $\{w_n\}$),  $w_n\rightarrow c$ in $W^{1,p}_{\loc}(\Gw)$ for a constant $c\geq 0$. Since $\tilde{v}$ is locally bounded and bounded away from zero,  $u_n\rightarrow c\tilde{v}$ a.e. in $\Omega$ and also in $L^p_{\loc}(\Omega)$ (up to a subsequence). In particular,  $c=\|\tilde{v}\|_{L^p(K)}^{-1}>0$. It follows that any null-sequence $\{u_n\}$ converges (up to a positive multiplicative constant) to the same positive supersolution $\tilde{v}$. Since the positive solution $v$ is also a positive supersolution,  we conclude that $\tilde{v}=Cv$ for some $C>0$, and thus $\tilde{v}$ is also the unique positive solution of the equation $Q_{p,A,V}(u)=0$ in $\Omega$.
\end{proof}
\begin{lemma}\label{ground_lem1}
	Suppose that $A$ and $V$ satisfy Assumptions~\ref{assump}. Let $\omega\Subset\Omega$ be a subdomain. Then, up to a multiplicative constant, the positive principal eigenfunction of $\mathcal{Q}_{p,A, V}$   in $\omega$ is a ground state  of $\mathcal{Q}_{p,A, V-\Lambda_1}$ in $\omega$.
\end{lemma}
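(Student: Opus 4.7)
\textbf{Proof proposal for Lemma \ref{ground_lem1}.} Let $\phi \in W_0^{1,p}(\omega)$ be the positive principal eigenfunction of $\mathcal{Q}_{p,A,V}$ in $\omega$ with eigenvalue $\Lambda_1$, so
\[
\int_\omega |\nabla \phi|_A^{p-2}A\nabla\phi\cdot\nabla\varphi\dx + \int_\omega V\phi^{p-1}\varphi\dx = \Lambda_1\int_\omega \phi^{p-1}\varphi\dx, \qquad \varphi\in C_c^\infty(\omega).
\]
The plan is to produce a null-sequence of $\mathcal{Q}_{p,A,V-\Lambda_1}$ that converges in $L^p_{\loc}(\omega)$ to $\phi$ (up to normalization). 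First, observe that $\phi$ is a positive weak solution of $Q_{p,A,V-\Lambda_1}(u)=0$ in $\omega$, so by the AAP-type theorem (Theorem~\ref{aap_thm}) we have $\mathcal{Q}_{p,A,V-\Lambda_1}\geq 0$ in $\omega$, and by testing the eigenvalue equation formally against $\phi$ (or equivalently using the Rayleigh quotient) we get
\[
\mathcal{Q}_{p,A,V-\Lambda_1}(\phi;\omega) = \mathcal{Q}_{p,A,V}(\phi;\omega) - \Lambda_1\|\phi\|_{L^p(\omega)}^p = 0.
\]

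Next, I would approximate $\phi$ by a nonnegative compactly supported smooth sequence. Since $\phi\in W_0^{1,p}(\omega)$ and $\phi\geq 0$, there is a sequence $\{\psi_n\}\subset C_c^\infty(\omega)$ with $\psi_n\to\phi$ in $W_0^{1,p}(\omega)$; replacing $\psi_n$ by a standard mollification of $\psi_n^{+}$ produces a new sequence, still denoted $\{\psi_n\}\subset C_c^\infty(\omega)$, with $\psi_n\geq 0$ and $\psi_n\to\phi$ in $W^{1,p}_0(\omega)$, hence also in $L^p(\omega)$ and a.e.\ along a subsequence. By the Morrey-Adams estimate of Theorem~\ref{cor_1} applied to both $V^\pm$ and to the constant potential $\Lambda_1$, the map $u\mapsto \int_\omega (V-\Lambda_1)|u|^p\dx$ is continuous on $W_0^{1,p}(\omega)$ (one handles it exactly as in the proof of Lemma~\ref{lem_wlc_coer}(i)); combined with the continuity of $u\mapsto\int_\omega|\nabla u|_A^p\dx$ on $W_0^{1,p}(\omega)$ (from the local boundedness of $A$), this gives
\[
\mathcal{Q}_{p,A,V-\Lambda_1}(\psi_n;\omega) \longrightarrow \mathcal{Q}_{p,A,V-\Lambda_1}(\phi;\omega) = 0.
\]

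Finally, fix any open $K\Subset\omega$; by the Harnack inequality (Theorem~\ref{thm_hrnck}) we have $\inf_K\phi>0$, so $\|\phi\|_{L^p(K)}>0$, and therefore $\|\psi_n\|_{L^p(K)}\to\|\phi\|_{L^p(K)}>0$. Set
\[
u_n := \frac{\psi_n}{\|\psi_n\|_{L^p(K)}}\in C_c^\infty(\omega)\subset W_c^{1,p}(\omega).
\]
Then $u_n\geq 0$, $\|u_n\|_{L^p(K)}=1$, and
\[
\mathcal{Q}_{p,A,V-\Lambda_1}(u_n;\omega) = \frac{\mathcal{Q}_{p,A,V-\Lambda_1}(\psi_n;\omega)}{\|\psi_n\|_{L^p(K)}^p}\longrightarrow 0,
\]
so $\{u_n\}$ is a null-sequence for $\mathcal{Q}_{p,A,V-\Lambda_1}$ in $\omega$. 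Moreover $u_n\to \phi/\|\phi\|_{L^p(K)}$ in $L^p_{\loc}(\omega)$, so this normalized multiple of $\phi$ is a ground state of $\mathcal{Q}_{p,A,V-\Lambda_1}$ in $\omega$, as required. The only delicate point is the continuity of the quadratic form, but this reduces to the Morrey-Adams estimates already established in Section~2 and offers no real difficulty; everything else is approximation and normalization.
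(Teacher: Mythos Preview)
Your proof is correct and takes essentially the same approach as the paper. The paper's argument is terser: it simply declares the constant sequence $\{\|v_1\|_{L^p(K)}^{-1}\,v_1\}$ to be a null-sequence of $\mathcal{Q}_{p,A,V-\Lambda_1}$, without spelling out the approximation by nonnegative $C_c^\infty(\omega)$ functions and the continuity of $\mathcal{Q}_{p,A,V-\Lambda_1}$ under $W_0^{1,p}$-convergence that you carry out explicitly.
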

\begin{proof}
	Let $v_1$ be the positive principal eigenfunction of $\mathcal{Q}_{p,A, V}$   in $\omega$ with principal eigenvalue $\Lambda_1$. Fix $K\Subset\omega$ such that  $C_K:=\|v_1\|_{L^p(K)}>0$. Then the sequence $\{C^{-1}_Kv_1\}$ is a null-sequence and, $C^{-1}_Kv_1$ is a ground state of the functional $\mathcal{Q}_{p,A, V-\Lambda_1}$ in $\omega$.
\end{proof}
As a corollary of Theorem~\ref{ground_st_thm}, we have: 
\begin{theorem}\label{evalue_supersoln}
	Let $\omega\Subset\Omega$ be a subdomain. Assume that $A$ and $V$ satisfy Assumptions~\ref{assump},  and if $p<2$ assume further that Assumption~\ref{assump_strong} is satisfied.  Then the following assertions are equivalent:
	\begin{itemize}
		\item[(i)] the principal eigenvalue $\Lambda_1>0$;
		\item[(ii)] the equation $Q_{p,A,V}(u)=0$ in $\omega$ admits a strict positive supersolution in $W^{1,p}_0(\omega)$;
		\item[(iii)] the equation $Q_{p,A,V}(u)=0$ in $\omega$ admits a strict positive supersolution in $W^{1,p}(\omega)$.
	\end{itemize} 
\end{theorem}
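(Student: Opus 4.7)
The plan is to prove the cyclic chain (i) $\Rightarrow$ (ii) $\Rightarrow$ (iii) $\Rightarrow$ (i). The implication (ii) $\Rightarrow$ (iii) is trivial because $W_0^{1,p}(\omega) \subset W^{1,p}(\omega)$. For (i) $\Rightarrow$ (ii), I would appeal directly to the existence-uniqueness Theorem~\ref{thm_unique}: fix any nonzero nonnegative $g \in L^{p'}(\omega)$, and let $v \in W_0^{1,p}(\omega)$ be the unique positive solution of $Q_{p,A,V}(v) = g$ with $v = 0$ on $\partial \omega$, whose existence is guaranteed by $\Lambda_1 > 0$. Since $g \geq 0$ and $g \not\equiv 0$, this $v$ is a strict positive supersolution in $W_0^{1,p}(\omega)$.

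The nontrivial direction is (iii) $\Rightarrow$ (i). Let $v \in W^{1,p}(\omega)$ be a strict positive supersolution of $Q_{p,A,V}(u) = 0$ in $\omega$. Since $v \in W_{\loc}^{1,p}(\omega)$, the AAP-type Theorem~\ref{aap_thm} applied in the subdomain $\omega$ gives $\mathcal{Q}_{p,A,V}(\varphi; \omega) \geq 0$ for every $\varphi \in C_c^\infty(\omega)$, and the Rayleigh-Ritz formula yields $\Lambda_1 \geq 0$. It remains to exclude $\Lambda_1 = 0$, which I would do by contradiction. Assuming $\Lambda_1 = 0$, Lemma~\ref{ground_lem1} shows that (a suitable multiple of) the principal eigenfunction $v_1$ is a ground state of $\mathcal{Q}_{p,A,V}$ in $\omega$. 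Theorem~\ref{ground_st_thm}, applied to the domain $\omega$ (using Assumption~\ref{assump_strong} when $p < 2$), then asserts that every positive supersolution of $Q_{p,A,V}(u) = 0$ in $\omega$ is a constant multiple of $v_1$. Hence $v = c\, v_1$ for some $c > 0$, and since $v_1$ is an eigenfunction with eigenvalue $\Lambda_1 = 0$, it satisfies $Q_{p,A,V}(v_1) = 0$. Consequently $Q_{p,A,V}(v) = 0$, contradicting the strictness of $v$. Therefore $\Lambda_1 > 0$.

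The main obstacle is the ruling out of $\Lambda_1 = 0$ in the last implication, and it is precisely this step that requires the full uniqueness statement of Theorem~\ref{ground_st_thm}. That uniqueness theorem is proved via the null-sequence argument in Lemma~\ref{lem_2}, which in turn uses the strong convexity estimate \eqref{eq:strong-conv} for the integrand $|\xi|_A^p$; this is automatic for $p \geq 2$ but in the range $1 < p < 2$ one has to invoke the local strong convexity Assumption~\ref{assump_strong} (or, by Remark~\ref{rem_p_less_}, local boundedness of $\nabla v$) to close the argument. This explains the extra hypothesis in the theorem when $p < 2$.
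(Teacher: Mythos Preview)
Your proof is correct and follows essentially the same route as the paper, including the key step (iii) $\Rightarrow$ (i) via the AAP theorem, Lemma~\ref{ground_lem1}, and the uniqueness statement of Theorem~\ref{ground_st_thm}. The only minor difference is in (i) $\Rightarrow$ (ii): the paper simply takes the principal eigenfunction $v_1\in W_0^{1,p}(\omega)$ itself, which satisfies $Q_{p,A,V}(v_1)=\Lambda_1 v_1^{p-1}>0$ and is therefore a strict positive supersolution, whereas you invoke the heavier existence Theorem~\ref{thm_unique}; both arguments are valid.
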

\begin{proof}
	$\text{ (i) }\Rightarrow\text{ (ii) }$: Assume that the principal eigenvalue $\Lambda_1>0$ with a principal eigenvalue $v\in W^{1,p}_0(\omega)$. Then by the strong maximum principle, $v$ is a strict positive supersolution of $Q_{p,A,V}(u)=0$ in $\omega$. \\
	$\text{ (ii) }\Rightarrow\text{ (iii) }$: This is immediate.\\	
$\text{ (iii) }\Rightarrow\text{ (i) }$: Suppose that the equation $Q_{p,A,V}(u)=0$ in $\omega$ admits a strict positive supersolution in $W^{1,p}(\omega)$.  By the AAP theorem (Theorem~~\ref{aap_thm}), $\mathcal{Q}_{p,A,V}(\varphi)\geq 0$ for all $\varphi\in C_c^\infty(\omega)$. This implies that the principal eigenvalue $\Lambda_1\geq 0$.  Now if $\Lambda_1=0$, then in light of Theorem~\ref{ground_st_thm} and Lemma~\ref{ground_lem1}, the associated principal eigenfunction is the unique (up to a multiplicative constant) positive supersolution of the equation $Q_{p,A, V}(u)=0$ in $\omega$. This contradicts our assumption that the equation $Q_{p,A, V}(u)=0$ in $\omega$ admits a strict positive supersolution in $W^{1,p}(\omega)$. Therefore, $\Lambda_1>0$.
\end{proof}
%%%%%%%%%%%%%%%%%%%%%%%%%%%%%%
\subsection{Weak comparison principle}
The first result in this subsection is  a simple version of the weak comparison principle between sub and supersolutions of the operator $Q_{p,A, V}$ when the potential $V\in\mathfrak{W}_{\loc}^p(\Gw)$ is nonnegative. We omit the proof since it is quite standard and follows exactly as in \cite[Lemma 5.1]{pinchover_psaradakis}, where the potential is assumed to be in $M^q(p;\omega)$. 
\begin{lemma}\label{sub_sup_com}
	Let $1<p<d$, and $\omega\Subset \Gw$ be a Lipschitz subdomain. Suppose that $A$ and $V$satisfy Assumptions~\ref{assump}, $\mathcal{G}\in \mathfrak{W}^p_{\loc}(\Gw)$, and $V\geq 0$ in $\omega$. Let $v_1$, $v_2$ be respectively a subsolution and a supersolution of the problem 
	$$Q_{p,A, V}(u)=\mathcal{G} \quad \mbox{in } \omega.$$
If $v_1\leq v_2$ a.e. on $\partial \omega$ in the trace sense, then $v_1\leq v_2$ a.e. in $\omega$.
\end{lemma}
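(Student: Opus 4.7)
The plan is to follow the classical weak comparison argument: test the difference of the two weak formulations against $\varphi := (v_1-v_2)^{+}$ and use the monotonicity of both the principal part and the zeroth-order part. Since $v_1 \leq v_2$ on $\partial\omega$ in the trace sense and $\omega$ is Lipschitz, we have $\varphi \in W_0^{1,p}(\omega)$, with $\nabla \varphi = \nabla(v_1-v_2)\mathbf{1}_{\{v_1>v_2\}}$. The preliminary check to dispatch is that this test function is admissible in the weak subsolution/supersolution inequalities: the terms $\int_\omega V|v_i|^{p-2}v_i \varphi\dx$ are finite because $V\in \mathfrak{W}^p_{\loc}(\Omega)$ and Theorem~\ref{cor_3} applied to $|v_i|+|v_i-v_2|$ controls $\int_\omega |V|(|v_1|+|v_2|)^{p-1}\varphi\dx$ by $\|\nabla u\|_{L^p}^p + \|u\|_{L^p}^p$ for the relevant Sobolev functions; similarly the $\mathcal{G}$ terms are finite.

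Subtracting the defining inequality for the subsolution $v_1$ from that of the supersolution $v_2$ with the test function $\varphi$, the $\mathcal{G}$ terms cancel and one obtains
\begin{equation*}
\int_\omega \bigl(|\nabla v_1|_A^{p-2}A\nabla v_1 - |\nabla v_2|_A^{p-2}A\nabla v_2\bigr)\cdot \nabla(v_1-v_2)\mathbf{1}_{\{v_1>v_2\}}\dx
+ \int_\omega V\bigl(|v_1|^{p-2}v_1 - |v_2|^{p-2}v_2\bigr)\varphi\dx \leq 0.
\end{equation*}
On $\{v_1>v_2\}$ the second integrand is pointwise nonnegative, because $V\geq 0$ and $t\mapsto |t|^{p-2}t$ is monotone increasing on $\mathbb{R}$. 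For the first integrand, the strict convexity of $\xi\mapsto |\xi|_A^p$ (equivalently, the strong monotonicity inequality \eqref{eq:strong-conv}) gives
\begin{equation*}
\bigl(|\nabla v_1|_A^{p-2}A\nabla v_1 - |\nabla v_2|_A^{p-2}A\nabla v_2\bigr)\cdot \nabla(v_1-v_2) \;\geq\; C(p)\,[\nabla v_1,\nabla v_2]_{p,A}\geq 0
\end{equation*}
almost everywhere, by symmetrizing \eqref{eq:strong-conv} in $\xi=\nabla v_1,\eta=\nabla v_2$.

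Thus both integrands vanish a.e. on $\{v_1>v_2\}$. The strict monotonicity then forces $\nabla v_1=\nabla v_2$ a.e. on $\{v_1>v_2\}$, hence $\nabla\varphi=0$ a.e. in $\omega$. Since $\varphi\in W_0^{1,p}(\omega)$, the Poincar\'e inequality yields $\varphi\equiv 0$ a.e. in $\omega$, i.e.\ $v_1\leq v_2$ a.e. in $\omega$.

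I do not expect any serious obstacle: the only mild technical points are verifying admissibility of $\varphi$ as a test function (handled via Theorem~\ref{cor_3} for the potential $V\in\mathfrak{W}^p_{\loc}(\Omega)$ and the datum $\mathcal{G}$) and invoking strict convexity of $|\xi|_A^p$ to conclude $\nabla(v_1-v_2)=0$ on the contact set. The hypothesis $V\geq 0$ is used only to guarantee the sign of the zeroth-order term; the cancellation of $\mathcal{G}$ means no assumption on its sign is needed.
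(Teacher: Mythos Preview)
Your argument is correct and is precisely the standard proof the paper has in mind: the paper omits the proof and refers to \cite[Lemma~5.1]{pinchover_psaradakis}, which proceeds exactly by testing with $(v_1-v_2)^+\in W_0^{1,p}(\omega)$ and using the monotonicity of $\xi\mapsto|\xi|_A^{p-2}A\xi$ together with $V\geq 0$. The only cosmetic point is that your ``preliminary check'' on admissibility of $\varphi$ is slightly muddled in its wording; the clean way is to note that $\int_\omega|V|\,|v_i|^{p-1}\varphi\dx$ and $\int_\omega|\mathcal G|\,\varphi\dx$ are finite by H\"older together with the Morrey--Adams estimate (Theorem~\ref{cor_3}) applied to $|V|$ and $|\mathcal G|$, since $v_i,\varphi\in W^{1,p}(\omega)$.
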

\begin{theorem}\label{existence_th}
	Let $1<p<d$ and $ \omega \Subset \Gw$ be a Lipschitz subdomain.  Suppose that $A$ and $V$satisfy Assumptions~\ref{assump} and $g \in  \mathfrak{W}^p_{\loc}(\Gw)$, where $g \geq 0$. Let $f, u_1, u_2\in W^{1,p}(\omega)\cap C(\bar{\omega})$, where $f\geq 0$ a.e. in $\omega$ and 
	\begin{align*}
	Q_{p,A, V}(u_1) &\leq g \leq Q_{p,A, V}(u_2) \quad \mbox{in } \omega,\\
	u_1 & \leq f \leq u_2 \quad \mbox{on } \partial \omega,\\
	0& \leq u_1 \leq u_2 \quad \mbox{in } \omega.
	\end{align*}
	Then there exists a  nonnegative solution $u\in W^{1,p}(\omega)\cap C(\bar{\omega})$ of the Dirichlet problem  
	\begin{align}\label{eq:6}
	\begin{cases}
	Q_{p, A, V}(u) = g \quad & \mbox{ in } \omega, \\
	u = f \quad &\mbox{ on } \partial\omega,
	\end{cases}
	\end{align}
	such that $u_1 \leq u \leq u_2$ a.e. in $\omega$. Moreover, if $f>0$ a.e. on $\partial \omega$, then the solution $u$ of the Dirichlet problem \eqref{eq:6} is unique.
\end{theorem}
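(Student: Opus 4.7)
The plan is to combine a truncation of the reaction term with the variational method, and then remove the truncation via a sub/super-type comparison. Set
\begin{equation*}
T(x,s):=\max\{u_1(x),\min\{u_2(x),s\}\},
\end{equation*}
and consider the auxiliary problem
\begin{equation*}
-\Delta_{p,A}(u)+V\,|T(x,u)|^{p-2}T(x,u)=g\text{ in }\omega,\qquad u=f\text{ on }\partial\omega.
\end{equation*}
Since $u_1,u_2\in C(\bar\omega)\subset L^\infty(\omega)$, the replacement reaction term is controlled pointwise by $C_{u_1,u_2}|V|$, so Theorem~\ref{cor_3} applies to all integrals in the weak formulation uniformly in the size of $u$.

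A solution of the truncated problem can be produced by minimizing the energy functional
\begin{equation*}
\mathcal{F}[w]:=\int_\omega|\nabla w|_A^p\dx+p\int_\omega V(x)\,\Psi(x,w)\dx-p\int_\omega g\,w\dx,
\end{equation*}
with $\Psi(x,t):=\int_{u_1(x)}^{t}|T(x,s)|^{p-2}T(x,s)\ds$, over the affine class $f+W^{1,p}_0(\omega)$. Coercivity on this class follows from the Poincar\'e inequality applied to $w-f$, the pointwise bound $|\Psi(x,t)|\le C_{u_1,u_2}(1+|t|)$, and Theorem~\ref{cor_3} applied to the $V$-term; weak lower semicontinuity follows from convexity of $\xi\mapsto|\xi|_A^p$, the compact embedding $W^{1,p}(\omega)\hookrightarrow L^p(\omega)$, and the Carath\'eodory structure of $V\,\Psi(x,\cdot)$, exactly as in the proof of Lemma~\ref{lem_wlc_coer}. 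The resulting minimizer $u\in f+W^{1,p}_0(\omega)$ is a weak solution of the truncated equation.

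The crux is the enclosure $u_1\le u\le u_2$ a.e., because then $T(x,u)=u$ and $u$ solves~\eqref{eq:6}. To get $u\le u_2$, subtract the weak formulation for the supersolution $u_2$ from the one for $u$ and test with $(u-u_2)_+$, which lies in $W^{1,p}_0(\omega)$ thanks to the boundary hypothesis $f\le u_2$. On the set $\{u>u_2\}$ the truncation forces $T(x,u)=u_2$, so the $V$-reaction contributions cancel identically, leaving
\begin{equation*}
\int_{\{u>u_2\}}\!\!\bigl(|\nabla u|_A^{p-2}A\nabla u-|\nabla u_2|_A^{p-2}A\nabla u_2\bigr)\!\cdot\!\nabla(u-u_2)\dx\le 0;
\end{equation*}
strict convexity of $\xi\mapsto|\xi|_A^p$ makes the integrand pointwise nonnegative, forcing $\nabla(u-u_2)_+\equiv 0$, and together with the zero-trace this yields $(u-u_2)_+\equiv 0$. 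The symmetric test with $(u_1-u)_+$, using the subsolution property of $u_1$ and $f\ge u_1$ on $\partial\omega$, gives $u\ge u_1$. Continuity up to $\partial\omega$ then follows from the interior continuity estimate of Corollary~\ref{mdlus_cont} combined with standard boundary regularity for quasilinear equations with Wolff-class potentials on Lipschitz domains.

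For uniqueness under $f>0$, let $u,\tilde u$ be two solutions with values in $[u_1,u_2]$. Both are nonnegative supersolutions of $Q_{p,A,V}(\cdot)=0$ (as $g\ge 0$), so Theorem~\ref{w_hrnck} combined with $u=\tilde u=f>0$ on $\partial\omega$ forces $u,\tilde u>0$ throughout $\omega$. I would then replay verbatim the Diaz--Saa / Picone calculation of the uniqueness portion of the proof of Theorem~\ref{thm_unique}: with $u_h:=u+h$, $\tilde u_h:=\tilde u+h$, test the equation for $u$ by $(u_h^p-\tilde u_h^p)/u_h^{p-1}$ and that for $\tilde u$ by $(\tilde u_h^p-u_h^p)/\tilde u_h^{p-1}$ (admissible in $W^{1,p}_0(\omega)$ because the traces of $u$ and $\tilde u$ coincide), add, invoke the strong-convexity inequality~\eqref{eq:strong-conv}, and pass $h\to 0^+$ by dominated convergence; the $g$-term on the right is pointwise nonpositive and the $V$-term vanishes in the limit, which forces $\nabla\log(u/\tilde u)\equiv 0$ and hence $u\equiv\tilde u$. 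The main technical difficulty throughout is the enclosure step: Lemma~\ref{sub_sup_com} cannot be applied directly because $V$ may be sign-changing, and the test-function manipulation relies on $V\,|T(\cdot,u)|^{p-2}T(\cdot,u)\,(u-u_2)_+\in L^1(\omega)$, which in turn requires Theorem~\ref{cor_3} together with the $L^\infty$-bound on $T(x,u)$ afforded by the truncation.
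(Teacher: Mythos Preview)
Your existence argument is correct but takes a genuinely different route from the paper. The paper proceeds by \emph{monotone iteration}: it rewrites $Q_{p,A,V}(u)=g$ as $Q_{p,A,|V|}(u)=g+2V^{-}v^{p-1}=:\mathcal{G}(x,v)$, invokes Theorem~\ref{dirchlet_prb} and Lemma~\ref{sub_sup_com} (both of which require a nonnegative potential, hence the passage to $|V|$) to build a monotone solution map $\psi$, and then iterates from $u_1$ upward and from $u_2$ downward to trap a solution. Your approach instead truncates the reaction term, minimizes a single functional in one shot, and recovers the enclosure $u_1\le u\le u_2$ by the clean test with $(u-u_2)_+$ and $(u_1-u)_+$; the key observation that the truncation forces the $V$-terms to cancel on $\{u>u_2\}$ is what lets you avoid the sign restriction on $V$ that the paper's Lemma~\ref{sub_sup_com} needs. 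The paper's route has the minor advantage of producing both a minimal and a maximal solution between $u_1$ and $u_2$; yours is shorter and does not require the auxiliary Dirichlet theory for nonnegative potentials.

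For uniqueness you follow essentially the same Diaz--Saa/Picone computation as the paper (the paper actually carries it out without the $h$-regularization, using directly $\varphi_i=(v_i^p-v_j^p)/v_i^{p-1}$ once positivity is secured). One small point: you phrase the uniqueness for ``two solutions with values in $[u_1,u_2]$'', but the statement asserts uniqueness among all (positive) solutions of \eqref{eq:6}. Your argument does not actually need the $[u_1,u_2]$ restriction---any solution with $g\ge 0$ and $f>0$ on $\partial\omega$ is a nonnegative supersolution of $Q_{p,A,V}(\cdot)=0$, hence strictly positive by the weak Harnack inequality, and the Picone calculation then applies verbatim. You should drop that unnecessary hypothesis.
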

\begin{proof} {\bf Existence:} 
	Define the set $$S:= \{ v \in W^{1,p}(\omega)\cap C(\bar{\omega}): 0\leq u_1 \leq v \leq u_2\,\,\mbox{in } \omega \}.$$ For $v\in S$, consider the function $\mathcal{G}(x, v):= g(x)+ 2 V^{-}(x) v^{p-1}(x)$, $x\in \omega$. Since $0\leq v \in C(\bar{\omega})$, it follows that  $\mathcal{G}\in \mathfrak{W}^p(\omega)$ and also $\mathcal{G} \geq 0$ a.e. in $\omega$. For $v\in S$ consider the following Dirichlet problem 
	\begin{align}\label{eq:7}
	\begin{cases}
		Q_{p, A, |V|}(u) = \mathcal{G}(x, v)  \quad&\mbox{ in } \omega, \\
	u = f  \qquad & \mbox{ on } \partial\omega.
	\end{cases}
	\end{align}
By Theorem~\ref{dirchlet_prb}, the Dirichlet problem \eqref{eq:7} admits a solution $u\!\in\! W^{1,p}(\omega)$. Moreover,  the solution is unique. Indeed, if $u$ and $\tilde{u}$ are solutions of the above problem, then we have 
\begin{align*}
	&Q_{p, A, |V|}(u) = \mathcal{G}(x, v) = Q_{p, A, |V|}(\tilde{u})  & \mbox{in } \omega, \\
&u =  f = \tilde{u}  & \mbox{on } \partial\omega.
\end{align*}
 By applying Lemma~\ref{sub_sup_com} with $u$ as a subsolution and $\tilde{u}$ as a supersolution, we get $u\leq \tilde{u}$ in $\omega$. By interchanging the roles of  $u$  and $\tilde{u}$, it follows that $\tilde{u}\leq u$ in $\omega$. Hence,  $u=\tilde{u}$ in $\omega$.
 
 Define a map $\psi: S \rightarrow W^{1,p}(\omega)$ by $\psi(v)= u$, where $u$ is a solution of the problem \eqref{eq:7}. The mapping $\psi$ is well-defined in the set $S$ and by using  Lemma~\ref{sub_sup_com}, it can be seen that the map $\psi$ is monotone. Let $v\in W^{1,p}(\omega)\cap C(\bar{\omega})$ be a subsolution of the Dirichlet problem \eqref{eq:6}. Then $$ Q_{p,A, |V|}(v)= Q_{p,A, V}(v) +\mathcal{G}(x,v)-g(x)\leq \mathcal{G}(x,v)$$ 
 in the weak sense, and hence $v$ is a subsolution of the problem \eqref{eq:7}. Also, by the definition of $\psi$, $\psi(v)$ is a solution of \eqref{eq:7}. By Lemma~\ref{sub_sup_com}, we have $v\leq \psi(v)$ a.e. in $\omega$. Moreover, 
 \begin{align*} 
 Q_{p,A,V}(\psi(v))&= Q_{p,A,|V|}(\psi(v)) -\mathcal{G}(x, \psi(v)) +g(x)\\
 &= 2V^{-}(|v|^{p-2}v-|\psi(v)|^{p-2}\psi(v))+g(x) \leq g(x) \quad \mbox{in } \, \omega,
 \end{align*}
 in the weak sense. This shows that if $v$ is a subsolution of  the Dirichlet \eqref{eq:6}, then $\psi(v)$ is  also a subsolution of \eqref{eq:6} with $v\leq \psi(v)$ a.e. in $\omega$. Similarly,  it can be seen that if $v\in W^{1,p}(\omega)\cap C(\bar{\omega})$ be a supersolution of  \eqref{eq:6}, then $\psi(v)$ also a supersolution of \eqref{eq:6} with $\psi(v)\leq v$ a.e. in $\omega$. \\
 Now define the following two sequences
 $$\underaccent{\bar}{v}_0:= u_1,\quad \underaccent{\bar}{v}_n:=\psi(\underaccent{\bar}{v}_{n-1}) \quad \mbox{ and } \quad \bar{v}_0:= u_2, \quad \bar{v}_n:= \psi(\bar{v}_{n-1}).$$
 From the above arguments,  the sequences $\underaccent{\bar}{v}_n\nearrow \underaccent{\bar}{v}$ and $\bar{v}_n\searrow\bar{v}$ for every $x\in \omega$. Also due to Brezis-Lieb Lemma\cite[Theorem 1.9]{leib_loss},
 %\Hmm{Please provide  a reference!}
  both the sequences converges in $L^p(\omega)$. By using a similar argument as in Theorem~\ref{hrnck_principle}, it can be seen that $\underaccent{\bar}{v}$ and $\bar{v}$ are fixed point of $\psi$ and both are solutions of the Dirichlet problem \eqref{eq:6} with $u_1\leq \underaccent{\bar}{v}\leq \bar{v}\leq u_2$ in $\omega$.
 
 \medskip
 
 \noindent {\bf Uniqueness:} Let $v_1, v_2\in W^{1,p}(\omega)\cap C(\bar{\omega})$ be positive solutions of \eqref{eq:6} with $f>0$ a.e. on $\partial \omega$. Then using $\varphi_1= \frac{v_1^p-v_2^p}{v_1^{p-1}}\in W_0^{1,p}(\omega)$ as a test function in the definition of $v_1$ being a solution of \eqref{eq:6}, we have 
 \begin{align*}
 	\int_{\omega}(v_1^p-v_2^p)|\nabla \log v_1|^p_{A}\dx & - p\int_\omega v_2^p|\nabla \log v_1|^{p-2}_AA\nabla \log v_1 \cdot \nabla \log(\frac{v_2}{v_1})\dx\\
 	&= \int_\omega\frac{(g-Vv_1^{p-1})(v_1^p-v_2^p)}{v_1^{p-1}}.
 \end{align*}
Similarly, using $\varphi_2= \frac{v_2^p-v_1^p}{v_2^{p-1}}\in W_0^{1,p}(\omega)$ as a test function in the definition of $v_2$ being a solution of the Dirichlet \eqref{eq:6}, we get 
\begin{align*}
	\int_{\omega}(v_2^p-v_1^p)|\nabla \log v_2|^p_{A}\dx & - p\int_\omega v_1^p|\nabla \log v_2|^{p-2}_AA\nabla \log v_2 \cdot \nabla \log(\frac{v_1}{v_2})\dx\\
	&= \int_\omega\frac{(g-Vv_2^{p-1})(v_2^p-v_1^p)}{v_2^{p-1}} .
\end{align*}
Adding the above two equations, we reach at
\begin{align*}
	& \int_\omega v_1^p\left( |\nabla \log v_1|^p_{A}-|\nabla \log v_2|^p_{A}-p|\nabla \log v_2|^{p-2}_AA\nabla \log v_2 \cdot \nabla \log(\frac{v_1}{v_2})\right)\dx\\
	&+\int_\omega v_2^p\left(|\nabla \log v_2|^p_{A}-|\nabla \log v_1|^p_{A}-p\nabla \log v_1|^{p-2}_AA\nabla \log v_1 \cdot \nabla \log(\frac{v_2}{v_1})\right)\dx\\
	&\,\,\,\,\,=\int_\omega g\left(\frac{1}{v_1^{p-1}}-\frac{1}{v_2^{p-1}}\right)(v_1^p-v_2^p)\dx\leq 0
\end{align*}
Now,\eqref{eq:strong-conv} implies that
%(see \cite{pinchover_psaradakis}) for all $\alpha, \beta\in \mathbb{R}^d$ and a.e. $x\in \omega$ 
%\begin{equation*}
%	|\alpha|_A^p-|\beta|_A^p -p |\beta|_A^{p-2} \!\! A(x) \beta\cdot (\alpha-\beta)
%	\! \geq \! C(p) \! \begin{cases*}
%	\!	|\alpha \! - \! \beta|_A^p \,\,\,\mbox{if } p\geq 2,\\
%	\!	|\alpha \! - \! \beta|_A^2 \! \left(|\alpha|_A  \!+ \! |\beta|_A\right)^{p-2}\mbox{ if } p \! \in \! (1,2),
%	\end{cases*}
%\end{equation*}
\begin{align*}
 0\!\geq 
  \!C(p)\! \!\! \int_\omega \!\!\!\dx \!
\begin{cases*}
 	(v_1^p+v_2^p)|\nabla \log(\frac{v_1}{v_2})|_A^p \qquad \mbox{ if } p\geq 2,\\[2mm]
 \!(v_1^p\!+\!v_2^p)|\nabla \log(\frac{v_1}{v_2})|_A^2\left(|\nabla \log v_1|_A \! + \! |\nabla \log v_2|_A\right)^{p-2}\mbox{ if } 1 \! <p \! <\!2.
 \end{cases*}
\end{align*}
Consequently, $v_2\nabla v_1=v_1\nabla v_2$ a.e. in $\omega$. Hence, there is a constant $c$ such that $v_1=cv_2$ a.e. in $\omega$. Since $v_1=v_2=f$ on $\partial \omega$, we infer that   $v_1=v_2$ in $\bar{\omega}$. Therefore, the Dirichlet problem \eqref{eq:6} admits a  unique solution.
 \end{proof}
\begin{theorem}[Weak Comparison Principle]\label{wcp}
Let $\omega\Subset\Omega$. Assume that $A$ and $V$ satisfy Assumptions~\ref{assump}, and    $\mathcal{Q}_{p,A,V}\geq 0$ in $\Omega$. Suppose that $v_1, v_2\in W^{1,p}(\omega)\cap C(\bar{\omega})$ satisfy the following inequalities 
\begin{equation*}
	\begin{cases*}
	Q_{p,A,V}(v_2)=g\,\,\text{in}\,\,\omega,\\
	v_2 >0\,\,\,\,\,\text{on}\,\,\partial\omega,
	\end{cases*}\quad \mbox{and}\quad 
	\begin{cases*}
	Q_{p,A,V}(v_1)\leq Q_{p,A,V}(v_2) \,\,\text{in}\,\,\omega,\\
	v_1\leq v_2\,\,\,\,\,\text{on}\,\,\partial\omega.
	\end{cases*}
	\end{equation*}
	where $g\in \mathfrak{W}^p(\omega)$ with $g\geq 0$ a.e. in $\omega$. Then $v_1 \leq v_2$ a.e. in $\omega$.
\end{theorem}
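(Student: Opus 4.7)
The plan is to argue by contradiction using a Diaz--Saa--type pair of test functions together with the Picone identity in Lemma~\ref{pincone}. Suppose the open set $K:=\{x\in\omega:v_1(x)>v_2(x)\}$ has positive Lebesgue measure; the goal is to show $K=\emptyset$. First I would observe that $v_2$ is bounded away from zero on $\bar\omega$: by hypothesis $v_2>0$ on $\partial\omega$, and since $Q_{p,A,V}(v_2)=g\ge 0$ the function $v_2$ is a nonnegative supersolution of $Q_{p,A,V}(u)=0$ in $\omega$, so the weak Harnack inequality (Theorem~\ref{w_hrnck}), combined with continuity and connectivity, forbids an interior zero. In particular, $v_1>v_2>0$ on $K$.

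Setting $w:=\max(v_1,v_2)\ge\min_{\bar\omega}v_2>0$, I would plug
\[\psi_1 := \frac{w^p-v_2^p}{w^{p-1}},\qquad \psi_2 := \frac{w^p-v_2^p}{v_2^{p-1}},\]
into the weak formulations of $Q_{p,A,V}(v_1)\le g$ and $Q_{p,A,V}(v_2)=g$, respectively. Both $\psi_i$ are nonnegative, bounded, vanish outside $K$, and lie in $W_0^{1,p}(\omega)$ because $v_1\le v_2$ on $\partial\omega$ forces $w=v_2$ there. After subtracting the two resulting relations, two cancellations occur on the effective support $K$: the $V$--term vanishes identically because $v_1^{p-1}\psi_1=w^p-v_2^p=v_2^{p-1}\psi_2$ on $K$ (using $v_1>0$ so that $|v_1|^{p-2}v_1=v_1^{p-1}$), and the right-hand side contributes $\int_K g(\psi_1-\psi_2)\dx\le 0$ since $v_1>v_2>0$ on $K$ gives $\psi_1<\psi_2$ while $g\ge 0$.

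What remains is the gradient contribution. Expanding $\nabla\psi_1$ and $\nabla\psi_2$ on $K$ and matching the resulting terms against \eqref{picone_eq1} yields the algebraic identity
\[|\nabla v_1|_A^{p-2}A\nabla v_1\cdot\nabla\psi_1 \;-\; |\nabla v_2|_A^{p-2}A\nabla v_2\cdot\nabla\psi_2 \;=\; L_A(v_2,v_1)+L_A(v_1,v_2)\quad\text{on } K.\]
Combined with the previous step, $\int_K[L_A(v_2,v_1)+L_A(v_1,v_2)]\dx\le 0$, and since each $L_A\ge 0$ by Lemma~\ref{pincone}, both expressions vanish a.e.~on $K$. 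Applying the equality case of Lemma~\ref{pincone} on each connected component $K_0$ of $K$ forces $v_2/v_1$ to be a positive constant $c$ on $K_0$, strictly less than $1$ since $v_1>v_2$ there. But continuity requires $c=1$ on $\partial K_0\cap\omega$ (where $v_1=v_2$) and $c\ge 1$ on $\partial K_0\cap\partial\omega$ (where $v_1\le v_2$), a contradiction in either case. Hence $K=\emptyset$ and $v_1\le v_2$ a.e.\ in $\omega$.

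The main obstacle is the simultaneous choice of $\psi_1$ and $\psi_2$: they must lie in $W_0^{1,p}(\omega)$ despite $v_1$ possibly being nonpositive somewhere in $\omega$, they must make the $V$--term cancel exactly, and their gradient contributions must reassemble into two nonnegative Picone expressions. The device $w=\max(v_1,v_2)$ localizes the whole computation to $K\subset\{v_1>0\}$ and avoids any $\epsilon$--regularization of $v_1$; after that, the Picone bookkeeping is essentially the asymmetric analogue of the one used in the uniqueness part of Theorem~\ref{existence_th}, now with a subsolution in place of a solution.
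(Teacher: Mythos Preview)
Your argument is correct and is a genuinely different route from the paper's. The paper proceeds indirectly: it first shows $v_2>0$ on $\bar\omega$, then scales $v_2$ to a supersolution $cv_2\ge v_1$, invokes the sub/supersolution machinery of Theorem~\ref{existence_th} to produce a solution $v$ of $Q(u)=g$ with $u=v_2$ on $\partial\omega$ and $v_1\le v\le cv_2$, and finally concludes $v=v_2$ by the uniqueness part of Theorem~\ref{existence_th}. Your proof instead works directly on the contact set $K=\{v_1>v_2\}$ via the D\'iaz--Sa\'a pair $\psi_1,\psi_2$ built from $w=\max(v_1,v_2)$, which localizes everything to the region where both functions are positive and delivers the Picone identity $\int_K[L_A(v_2,v_1)+L_A(v_1,v_2)]\,\dx\le 0$ without any iteration scheme. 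The payoff is that you avoid the entire monotone-iteration apparatus of Theorem~\ref{existence_th}; in fact your argument does not even need the sign condition $v_1\ge 0$ in $\omega$ that the paper's invocation of Theorem~\ref{existence_th} tacitly requires of its subsolution $u_1$.

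Two small points to tighten. First, the phrase ``$v_2$ is a nonnegative supersolution'' conflates two steps: $Q(v_2)=g\ge 0$ makes $v_2$ a supersolution, but nonnegativity of $v_2$ itself needs the weak maximum principle (Theorem~\ref{mp}), which is available here because $\mathcal{Q}_{p,A,V}\ge 0$ in $\Omega$ and $\omega\Subset\Omega$ force $\Lambda_1(\omega)>0$ via Theorem~\ref{cn_positivity}. Second, for $\psi_i\in W_0^{1,p}(\omega)$ you should note that $\psi_i\in C(\bar\omega)$ is nonnegative with $\psi_i=0$ on $\partial\omega$, so the truncations $(\psi_i-\varepsilon)^+$ have compact support and converge to $\psi_i$ in $W^{1,p}(\omega)$; this handles the case where $\overline K$ touches $\partial\omega$ without any regularity assumption on $\partial\omega$.
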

\begin{proof}
    By the assumption on $v_2$, we have that  $v_2$ is a supersolution of  equation \eqref{plaplace} in $\omega$ and $v_2>0$ on $\partial\omega$.  Hence, by the strong maximum principle we have $v_2>0$ in $\bar{\omega}$. Define $c:=\max\{1, \frac{\max_{\bar{\omega}} v_1}{\min_{\bar{\omega}}v_2}\}$. Then $v_1\leq cv_2$ in $\bar{\omega}$. Now consider the following boundary value problem 
    \begin{equation}\label{eq:8}
	\begin{cases*}
	Q_{p,A,V}(u)=g\,\,\text{in}\,\,\omega,\\
	u=v_2\,\,\,\,\,\text{on}\,\,\partial\omega.
	\end{cases*}
	\end{equation}
By our assumption and the choice of the constant $c$,  it follows that $cv_2$ is a supersolution of the problem \eqref{eq:8} with $v_1\leq v_2 \leq c v_2$ on $\partial\omega$. Also note that $v_1$ is a subsolution of the problem \eqref{eq:8}. By applying Theorem~\ref{existence_th} with $u_1=v_1$ and $u_2=cv_2$, we have a solution $v$ of \eqref{eq:8} such that $v_1\leq v \leq cv_2$ in $\omega$ with $v=v_2$ on $\partial\omega$. Again by the strong maximum principle, we have $v>0$ in $\bar{\omega}$. Now, the uniqueness of the boundary value problem \eqref{eq:8} (Theorem~\ref{existence_th}) implies that $v=v_2$. Therefore, $v_1\leq v_2$ a.e. in $\omega$.
\end{proof}
  
 \section{Fuchsian-type singularity and positive Liouville-type theorem}\label{FCH}
 The present section is devoted to the study of positive Liouville-type theorems, Picard-type principles, and removable singularity theorems for the equation 
 \begin{equation}\label{fuch_p_laplace}
 Q_{p,A,V}(u)=-\Delta_{p,A}(u)+V|u|^{p-2}u=0 \quad\mbox{in } \Omega,
 \end{equation}
where the potential $V\in \mathfrak{W}^p_{\loc}(\Gw)$ has an isolated Fuchsian-type singularity at $\zeta$ which belongs to the (ideal) boundary of $\Gw$.  Since we permit the domain $\Gw$ to be unbounded and the singular point $\zeta=\infty$,  it is worthy to consider the one-point compactification $\hat{\R}^d:=\R^d\cup\{\infty\}$ of $\R^d$. We denote by $\hat{\Gw}$ the closure of $\Gw$ in $\hat{\R}^d$.

 Throughout this section, we assume that $\mathcal{Q}_{p,A,V}\geq 0$ in $\Omega$. Note that under this assumption the weak comparison principle in bounded subdomains is valid when $A$ and $V$ satisfy Assumptions~$\ref{assump}$. 

 For $R>0$, we denote by $\mathcal{A}_R$ the annuls  given by
 $$\mathcal{A}_R:=\{x\in\mathbb{R}^d \mid  {R}/{2}\leq |x|< {3R}/{2} \}.$$  
 The dilated domain $\Omega/R$ of $\Gw$ is defined by
 $$\Omega/R:= \{x\in \mathbb{R}^d\,|\, x=R^{-1}y,\, \text{where}\,y\in\Omega\}.$$
 
 \medskip
 We assume that the singular point $\zeta\in \partial \hat{\Gw}$ is an isolated component of $\partial\hat{\Gw}$ and  $\zeta $ is either equal to $0$ or $\infty$. We are interested in the behavior of positive solutions of \eqref{fuch_p_laplace} near $\gz$. In fact, we may assume that  $\Gw$ is one of the followings domains: $B_r\setminus\{0\}$, $\R^d\setminus B_r$ for some $r>0$,  $\R^d$ and $\R^d\setminus\{0\}$.

 With some abuse of notation, we write 
\vspace{-3mm} 
$$x\to\gz \;\;\mbox{if} \;\;  
\begin{cases} 
x \to 0 & \mbox{in } \R \, \mbox{ and } \gz =0,\\
x \to \infty & \mbox{in  }  \R \, \mbox{ and }  \gz = \infty. 
\end{cases} $$

\begin{defin}[Fuchsian-type singularity]\label{fuchsian}{\em 
		Let $1<p<d$, and $A$ and $V$ satisfy Assumptions~\ref{assump}. Let $\zeta\in \{0, \infty\}$ be an isolated point of $\partial\hat{\Omega}$. Then we say that  the operator $Q_{p,A,V}$ has a {\em Fuchsian-type singularity at} $\zeta$ if there exists a punctured neighbourhood  $\Omega'\subset \Omega$ of  $\gz$ such that
			\begin{itemize}
				\item[(i)] The matrix $A$ is bounded and uniformly elliptic in $\Omega'$, i.e., there exists a constant $C>0$ such that 
				\be\label{ell}
				C^{-1}|\xi|^2\leq |\xi|_{A(x)}^2 \leq C|\xi|^2
				\qquad \forall x\in\Gw'\mbox{ and }\xi \in\R^d.
				\ee
				\item[(ii)] There exists a positive constant $C$ and $R_0>0$ such that 
				\begin{equation}\label{fuchsian_eqn}
				\| V\|_{\mathfrak{W}^p(\mathcal{A}_R\cap\Omega')}\leq C  
				 \end{equation}
				for all $0<R<R_0$ if $\gz=0$, and $R>R_0$ if $\gz=\infty$.
				
			\end{itemize}
 A set $\mathcal{A}\subset \Gw$ is said to be an {\it essential set} with respect to the singular point $\zeta$ if there exist $a\in(0,1)$, $b\in(1,\infty)$, and a sequence $\{R_n\}$ of positive numbers converging to $\zeta$ such that 
$$\mathcal{A}=\bigcup_{n = 1}^\infty \mathcal{A}_n,\quad \text{where}\,\,\mathcal{A}_n=\{x\in \Gw: aR_n<|x|<bR_n\}.$$
		}
\end{defin}
Similar to the linear case as in \cite{pinchover} or for the quasi-linear cases as in \cite{frass_pinchover, giri_pinchover}, it is enough to assume that  inequalities \eqref{ell} and \eqref{fuchsian_eqn} hold only on an essential set $\mathcal{A}$ with respect to the singular point $\zeta$, i.e., there exists a constant $C>0$ (independent of $n$) such that
\begin{equation*}\label{fuchsian_ess}
\C^{-1}|\xi|^2\leq |\xi|_{A(x)}^2 \leq C|\xi|^2
\quad \forall x\in \mathcal{A} \mbox{ and }\xi\in\R^d, \mbox{ and }\quad 
\| V\|_{\mathfrak{W}^p(\mathcal{A}_n)}\leq C,
\end{equation*}
for some essential set $\mathcal{A}=\cup_{n = 1}^\infty \mathcal{A}_n$ of $\zeta$.
\begin{example}{\em 
	Let $1<p<d$, and $\Gw=\R^d\setminus\{0\}$. Consider the  $(p,A)$-Laplacian with the Hardy potential $V(x)=\lambda |x|^{-p}$, and the equation
	\begin{equation}\label{hardy_eqn}
		-\Delta_{p,A}(u)-\frac{\lambda}{|x|^p}|v|^{p-2}v=0 \quad \mbox{in } \Gw,
	\end{equation}
where the matrix $A$ satisfies  \eqref{ell} outside a compact set in $\Gw$. We assert that equation \eqref{hardy_eqn} has Fuchsian-type singularity both at $\zeta =0$ and $\zeta =\infty$.\\
 Let us check that \eqref{fuchsian_eqn} is satisfied near $\zeta=0$. It is enough to show that for sufficiently small $R>0$, \eqref{fuchsian_eqn} is fulfilled over the annular set $\mathcal{A}_R=\{x\in\Omega: R/2\leq |x|< 3R/2\}$.  Indeed, 
\begin{align*}
\|V\|^{\frac{1}{p-1}}_{\mathfrak{W}^p(\mathcal{A}_R)} &=\sup_{x\in\mathcal{A}_R}\int_0^{\diam(\mathcal{A}_R)}\frac{1}{r^{\frac{d-1}{p-1}}}\left[\int_{\mathcal{A}_R\cap B_r(x)}|y|^{-p}\d y\right]^{\frac{1}{p-1}} \d r\nonumber\\[2mm]
&\leq C\sup_{x\in\mathcal{A}_R}\frac{1}{R^{\frac{p}{p-1}}}\int_0^{3R} \frac{1}{r^{\frac{d-1}{p-1}}} r^{\frac{d}{p-1}}\d r \leq  C \frac{1}{R^{\frac{p}{p-1}}} R^{\frac{p}{p-1}} =C <\infty.
\end{align*}
Note that the constant $C>0$ is independent of $R$. Hence, \eqref{hardy_eqn} has a Fuchsian-type singularity at $\zeta=0$. The case $\zeta=\infty$ follows similarly for large enough $R>0$.
}
\end{example}
Next, we present a dilation process using the quasi-invariance of \eqref{fuch_p_laplace} under the scaling $x\mapsto Rx$, where  $R>0$. For $R>0$, let $A_R$ and $V_R$ be the scaled matrix and the scaled potential defined by 
$$A_R(x):= A(Rx),\quad V_R(x):=R^pV(Rx) \qquad x\in \Omega/R.$$
If $u$ is a solution of \eqref{plaplace}, then for any $R>0$, the function $u_R(x):=u(Rx)$ is a solution of the equation 
\begin{equation*}\label{dil_eqn}
-\Delta_{p,A_R}(u_R)+V_R(x)|u_R|^{p-2}u_R=0\quad \mbox{in } \Gw/R.
\end{equation*}
Now take an annular subset $\mathcal{A}_R:= (B_{3R/2}\setminus\bar{B}_{R/2})\cap\Gw$. Since $\zeta$ is an
isolated singular point of $\partial \hat{\Gw}$, it follows that for $R$ `near' $\zeta$, the set $\mathcal{A}_R/R$ is a fixed annular set $\tilde{\mathcal{A}}$. Moreover,  for such $R$ we have
\begin{align}
& \|V_R\|_{\mathfrak{W}^p(\mathcal{A}_R/R)}^{\frac{1}{p-1}} =\sup_{x\in \mathcal{A}_R/R}\int_0^{\diam(\mathcal{A}_R/R)}\frac{1}{r^{\frac{d-1}{p-1}}}\left[\int_{\mathcal{A}_R/R\cap B_r(x)} \!\!\! R^p|V(Ry)|\d y\right]^{\frac{1}{p-1}}\!\!\!\!\!\d r\nonumber\\
&=\sup_{x\in \mathcal{A}_R/R}\int_0^{\diam(\mathcal{A}_R/R)}\frac{R^{\frac{p-d}{p-1}}}{r^{\frac{d-1}{p-1}}}\left[\int_{\mathcal{A}_R\cap B_{Rr}(Rx)}|V(y)|\d y\right]^{\frac{1}{p-1}}\!\!\!\!\d r\nonumber\\
&=\sup_{Rx\in \mathcal{A}_R}\int_0^{\diam(\mathcal{A}_R)}\frac{R^{\frac{p-d}{p-1}}\cdot R^{\frac{d-1}{p-1}}}{(Rr)^{\frac{d-1}{p-1}}}\left[\int_{\mathcal{A}_R\cap B_{Rr}(Rx)}|V(y)|\d y\right]^{\frac{1}{p-1}}\!\!\!\!\d r\nonumber\\
&= \!\! \sup_{Rx\in \mathcal{A}_R}\int_0^{\diam(\mathcal{A}_R)}\!\! \!\!\! \frac{1}{(Rr)^{\frac{d-1}{p-1}}} \! 
\left[ \! \int_{\mathcal{A}_R\cap B_{Rr}(Rx)} \!\!\!|\!V(y)|\d y \! \right]^{\frac{1}{p-1}}\!\!\!\!\d (Rr) 
\! = \!\|V\|_{\mathfrak{W}^p(\mathcal{A}_R)}^{\frac{1}{p-1}}\nonumber.
\end{align}
Thus, if  $Q_{p,A,V}$ has a Fuchsian-type singularity at $\zeta$, then 
\begin{equation}
\|V_R\|_{\mathfrak{W}^p(\tilde{\mathcal{A}})}=\|V_R\|_{\mathfrak{W}^p(\mathcal{A}_R/R)} =\|V\|_{\mathfrak{W}^p(\mathcal{A}_R)}\leq C,\label{fucheqn_1}
\end{equation}
where $C>0$ is a positive constant independent of $R$. 

The limiting dilation process is defined as follows. Let $\{R_n\}$ be a sequence of positive numbers satisfying $R_n\rightarrow \zeta$, and set $Y:=\lim_{n\to\infty}\Gw/{R_n}$. Clearly,  $Y=(\mathbb{R}^d)^*= \mathbb{R}^d\setminus\{0\}$. Let $\zeta=0$ or $\zeta=\infty$, and assume that  
\begin{align}
&A_{R_n} \longrightarrow \mathbb{A}\quad \mbox{in the weak topology of } L^\infty_{\loc}(Y, \mathbb{R}^{d\times d}), \label{dilated_conv1} \mbox{and }\\
&  \begin{cases}V_{R_n}\longrightarrow \mathbb{V} \quad \mbox{in the weak toplogy of } \mathfrak{W}^p_{\loc}(Y) \mbox{ if } 1<p\leq 2,\\
V_{R_n}\longrightarrow \mathbb{V} \quad \mbox{with resect to the quasinorm of } \mathfrak{W}^p_{\loc}(Y) \mbox{ if } p>2,
\end{cases}\label{dilated_conv2}
\end{align}
as $n\rightarrow \infty$. Define the {\em limiting dilated equation} with respect to  \eqref{fuch_p_laplace} and the sequence $\{R_n\}$ (which satisfies the conditions \eqref{dilated_conv1} and \eqref{dilated_conv2}) by
\begin{align}\label{dilated_eqn}
&\mathcal{D}^{\{R_n\}}(Q)(w) =\mathcal{D}^{\{R_n\}}(Q_{p,A,V})(w):=Q_{p,\mathbb{A},\mathbb{V}}(w)\nonumber \\
&= -\Delta_{p,\mathbb{A}}(w)+ \mathbb{V}|w|^{p-2}w=0\quad \mbox{on } Y.
\end{align}
Next, we show that the property of having a Fuchsian-type singularity at $\gz$ is preserved  under  the limiting dilation process $Q_{p,A, V}\mapsto \mathcal{D}^{\{R_n\}}(Q)$.
\begin{lem}
Let $1<p<d$ and assume that $A$ and $V$ satisfy Assumptions~\ref{assump}. Suppose that \eqref{fuch_p_laplace} has a Fuchsian-type singularity at $\zeta\in\{0,\infty\}\subset \partial\hat{\Gw}$. Then  for any sequence $R_n \rightarrow \zeta$ which satisfies conditions \eqref{dilated_conv1} and \eqref{dilated_conv2}, the corresponding limiting dilated equation $\mathcal{D}^{\{R_n\}}(Q)(w)=0$ in $Y$ has Fuchsian-type singularity at $\zeta$.
\end{lem}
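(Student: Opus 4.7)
The plan is to verify the two defining properties of a Fuchsian-type singularity, namely the uniform ellipticity of $\mathbb{A}$ and the uniform Wolff bound on $\mathbb{V}$ over annuli near $\zeta$, by transferring the corresponding bounds from the sequence $\{A_{R_n}\}, \{V_{R_n}\}$ through the convergence hypotheses \eqref{dilated_conv1}--\eqref{dilated_conv2}. Throughout, let $\Gw' \subset \Gw$ be the punctured neighbourhood of $\zeta$ on which the Fuchsian assumptions for $Q$ hold with constant $C$ and threshold $R_0$. Since $\zeta \in \{0,\infty\}$ is an isolated point of $\partial \hat\Gw$, shrinking $\Gw'$ if necessary, we may assume $\Gw'$ is either $B_{r_0}\setminus\{0\}$ or $\R^d\setminus \overline{B_{r_0}}$; in either case the dilated sets $\Gw'/R_n$ exhaust $Y=(\R^d)^*$ as $R_n\to\zeta$.

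For the ellipticity condition, the change of variables gives $C^{-1}|\xi|^2\le |\xi|^2_{A_{R_n}(x)}\le C|\xi|^2$ for a.e.\ $x\in \Gw'/R_n$ and all $\xi\in\R^d$, with the same constant $C$. Fix any measurable set $E\Subset Y$ and any $\xi\in\R^d$; for $n$ large, $E\subset \Gw'/R_n$. Testing the weak-$\ast$ convergence of $A_{R_n}$ in $L^\infty_{\loc}(Y;\R^{d\times d})$ against nonnegative $\ph\in L^1(E)$, the inequalities $\int_E A_{R_n}\xi\!\cdot\!\xi\,\ph\dx \in [C^{-1}|\xi|^2\int_E \ph, C|\xi|^2\int_E\ph]$ pass to the limit, yielding $C^{-1}|\xi|^2\le |\xi|^2_{\mathbb{A}(x)}\le C|\xi|^2$ for a.e.\ $x\in Y$. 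Thus $\mathbb{A}$ is bounded and uniformly elliptic on every punctured neighbourhood of $\zeta$ in $Y$, giving condition (i).

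For the Wolff bound, the key is the scaling identity derived in the excerpt just before \eqref{fucheqn_1}, which in the present notation reads
\begin{equation*}
\|V_{R_n}\|_{\mathfrak{W}^p(\mathcal{A}_\rho)} = \|V\|_{\mathfrak{W}^p(\mathcal{A}_{\rho R_n})}
\end{equation*}
for every $\rho>0$. For any fixed $\rho>0$, the product $\rho R_n$ tends to $\zeta$, so for $n$ sufficiently large, either $\rho R_n<R_0$ (when $\zeta=0$) or $\rho R_n>R_0$ (when $\zeta=\infty$), and $\mathcal{A}_{\rho R_n}\subset \Gw'$. Hence the Fuchsian hypothesis \eqref{fuchsian_eqn} on $Q$ yields $\|V_{R_n}\|_{\mathfrak{W}^p(\mathcal{A}_\rho)}\le C$ uniformly in $n$, with the \emph{same} constant $C$ and independently of $\rho$.

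It remains to transfer this uniform bound to $\mathbb{V}$. In the case $1<p\le 2$, $\mathfrak{W}^p(\mathcal{A}_\rho)$ is a Banach space by Lemma~\ref{lem_banach}, so its norm is weakly lower semicontinuous, and \eqref{dilated_conv2}(i) yields $\|\mathbb{V}\|_{\mathfrak{W}^p(\mathcal{A}_\rho)}\le \liminf_n\|V_{R_n}\|_{\mathfrak{W}^p(\mathcal{A}_\rho)}\le C$. In the case $p>2$, we only have a quasinorm; but convergence in quasinorm together with the quasi-triangle inequality \eqref{eqn_1} gives $\|\mathbb{V}\|_{\mathfrak{W}^p(\mathcal{A}_\rho)}\le 2^{p-2}\bigl(\|V_{R_n}-\mathbb{V}\|_{\mathfrak{W}^p(\mathcal{A}_\rho)}+\|V_{R_n}\|_{\mathfrak{W}^p(\mathcal{A}_\rho)}\bigr)\to 2^{p-2}C$ as $n\to\infty$. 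Either way one obtains a $\rho$-independent bound on $\|\mathbb{V}\|_{\mathfrak{W}^p(\mathcal{A}_\rho)}$, which is stronger than condition (ii) of Definition~\ref{fuchsian} for any choice of $\zeta\in\{0,\infty\}$. The only delicate point is the semicontinuity/quasi-continuity argument in the last step, but the Banach structure for $p\le 2$ and the explicit quasi-triangle constant \eqref{eqn_1} for $p>2$ make it routine; no new estimates beyond those already established in Section~\ref{sec_pre} are required.
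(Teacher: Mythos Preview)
Your proof is correct and follows the same strategy as the paper's: transfer the uniform ellipticity of $A_{R_n}$ and the annular Wolff bound on $V_{R_n}$ to the limits $\mathbb{A},\mathbb{V}$ via the convergence hypotheses, using the scaling identity \eqref{fucheqn_1}. Two minor differences are worth noting: (i) the paper phrases the Wolff estimate in terms of an \emph{essential set} $\cup_n\mathcal{A}_n$ along the given sequence $\{R_n\}$, whereas you obtain the bound for \emph{every} annulus $\mathcal{A}_\rho$, which is slightly stronger and arguably cleaner; (ii) for $p>2$ the paper invokes a Fatou-type lemma in quasinormed spaces from \cite{caetano} to get lower semicontinuity with constant $C$, while your elementary quasi-triangle argument \eqref{eqn_1} yields the constant $2^{p-2}C$ but avoids the external reference. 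Either constant suffices for the definition, so nothing is lost.
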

\begin{proof}
By our assumption, there exists an essential set  $\mathcal{A}=\cup_{n=1}^\infty \mathcal{A}_n$, where $\mathcal{A}_n=\{x\in \Gw: aR_n<|x|<bR_n\} $  and a constant $C>0$  such that $$\|V\|_{\mathfrak{W}^p(\mathcal{A}_n)} \leq C, \quad \forall \,n \in \mathbb{N}.$$
We claim that 
$$\|\mathbb{V}\|_{\mathfrak{W}^p(\mathcal{A}_n)}\leq C, \quad \forall \, n\in \mathbb{N}.$$
Since for each $n$, $\mathcal{A}_n/R_n$ is a fixed annular set $\tilde{\mathcal{A}}=\{ x\in\Gw: a<|x|<b\}$, we have for any fixed $k$ sufficiently large:  
\begin{align*}
\!\|\mathbb{V}\|_{\mathfrak{W}^p(\mathcal{A}/R_k)} =\|\mathbb{V}\|_{\mathfrak{W}^p(\tilde{\mathcal{A}})}  \! \leq \underset{n\rightarrow \infty}{\lim\inf}\|V_{R_n}\|_{\mathfrak{W}^p(\mathcal{A}_n/R_n)} \!\!= \!\underset{n\rightarrow \infty}{\lim\inf} \|V\|_{\mathfrak{W}^p(\mathcal{A}_n)}
\!\!\leq \!\!C.
\end{align*}
where $C>0$ is independent on $k$, and the weak convergence of $\{V_{R_n}\}$ is used for $1<p\leq 2$, while for $p>2$,  a Fatou type lemma on quasinormed space follows from \cite[Lemma 3.3 and Lemma 3.5]{caetano}. Similarly, $\C^{-1}|\xi|^2\leq |\xi|_{\mathbb{A}(x)}^2 \leq C|\xi|^2$ for all $x\in \mathcal{A}/R_k$, $k\geq k_0$. Therefore,
the lemma follows. 
%\red{But you have to prove $\|\mathbb{V}\|_{\mathfrak{W}^p(\mathcal{A}_n )} \leq C$.}
\end{proof}
Let us now recall the notion of a regular point (see \cite{frass_pinchover, frass_pinchover2, giri_pinchover}) of the operator $Q_{p,A,V}$ which turns out to be closely related to positive Liouville-type theorems for  \eqref{fuch_p_laplace}. 
\begin{definition} {\em 
Let $\mathcal{G}_\zeta$ be the germs of all positive solutions $u$ of the equation $Q_{p,A,V}(w)=0$ in relative
punctured neighbourhoods of $\zeta\in \{0,\infty\}$. The singular point $\zeta$ is said to be a {\em regular point of the equation $Q_{p,A,V}(w)=0$}  if for any two positive solutions $u, v \in \mathcal{G}_\zeta$ 
    $$\underset{x \in \Omega'}{\lim_{x\rightarrow \zeta}}\frac{u(x)}{v(x)}\,\,\, \mbox{ exists in the generalized sense},$$
    where $\Omega'$ is a relative punctured neighbourhood of $\zeta$.}
\end{definition}
The following result states that for any $1 < p <\infty $ and $d \geq 2$,  the singular point $\zeta$ is a regular point of the equation $-\Delta_{p,\mathbb{A}}(w)=0$ in $\mathbb{R}^d\setminus \{0\}$, where $\mathbb{A}\in \R^{d\times d}$ is a fixed symmetric and positive definite matrix and $\gz$ is either $0$ or $\infty$. For a proof see \cite[Theorem 4.1]{giri_pinchover}.
\begin{theo}\label{regularpoint_lap}
Assume that $1<p<\infty$ and $d\geq 2$. Let $\mathbb{A}\in \R^{d\times d}$ be a fixed symmetric and positive definite matrix. Then
\begin{itemize}
	\item[(i)] for  $p\leq d$, $\zeta =0$ is a regular point of  $-\Delta_{p,\mathbb{A}} (w)=0$ in $\mathbb{R}^d\setminus{\{0\}}$.
	\item[(ii)]  for $p\geq d$, $\zeta=\infty$ is a regular point of $-\Delta_{p,\mathbb{A}} (w)=0$ in $\mathbb{R}^d$.
\end{itemize}       
\end{theo}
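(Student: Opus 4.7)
The plan is to reduce to the standard $p$-Laplacian by a linear change of variables and then invoke the classical classification of isolated singularities of positive $p$-harmonic functions. Writing $\mathbb{A}=B^{2}$ with $B$ symmetric positive definite and setting $y=B^{-1}x$, I would compute that for $u(x)=v(B^{-1}x)$ one has $|\nabla_{x}u|_{\mathbb{A}}=|\nabla_{y}v|$ and $\mathbb{A}\nabla_{x}u=B\nabla_{y}v$, and then, using the symmetry identity $\sum_{i}(B^{-1})_{ij}B_{ik}=\delta_{jk}$ in the change-of-variables formula for the divergence, arrive at $-\Delta_{p,\mathbb{A}}(u)(x)=-\Delta_{p}(v)(y)$. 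Hence $u>0$ solves $-\Delta_{p,\mathbb{A}}(u)=0$ on $\Omega_x$ if and only if $v(y):=u(By)>0$ solves $-\Delta_p(v)=0$ on $B^{-1}\Omega_{x}$. Since $B$ is a homeomorphism of $\mathbb{R}^{d}$ fixing both $0$ and $\infty$, relative punctured and exterior neighbourhoods of $\zeta$ are preserved, and ratios of positive solutions transform into ratios with the same generalised limit. It therefore suffices to prove the theorem for $\mathbb{A}=I$.

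For part (i), given positive solutions $u,v$ of $-\Delta_{p}w=0$ in a punctured ball $B_{R}\setminus\{0\}$ with $1<p\leq d$, I would invoke the classical Serrin--Kichenassamy--V\'eron asymptotic classification: each such solution satisfies exactly one of the following alternatives, that either it is bounded near $0$ and extends to a positive $p$-harmonic function in $B_{R}$ (by the removable singularity theorem and the strong maximum principle at the extended point), or there exists $\gamma>0$ such that $u(x)/\mu(x)\to\gamma$, where $\mu(x):=|x|^{(p-d)/(p-1)}$ for $p<d$ and $\mu(x):=-\log|x|$ for $p=d$. A case analysis over the four possible pairings for $(u,v)$ then yields that $u(x)/v(x)$ admits a generalised limit in $[0,\infty]$ as $x\to 0$: if both extend continuously the limit equals the strictly positive quotient of their values at $0$; if exactly one of $u,v$ is singular the limit is $0$ or $\infty$ accordingly; if both are singular the limit equals $\gamma_u/\gamma_v$. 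For part (ii), the same strategy works at $\zeta=\infty$: I would invoke the analogous classification for positive $p$-harmonic functions in the exterior domain $\mathbb{R}^{d}\setminus\overline{B_{R}}$ with $p\geq d$, whose asymptotic profile at $\infty$ is $|x|^{(p-d)/(p-1)}$ for $p>d$ and $\log|x|$ for $p=d$. This can be obtained either directly from Harnack, scaling and barrier arguments at infinity, or by the Kelvin-type inversion $x\mapsto x/|x|^{2}$, which, thanks to the scaling (and for $p=d$ the conformal) properties of the $p$-Laplacian, converts positive solutions in a neighbourhood of $\infty$ into positive solutions with an isolated singularity at $0$ of an equivalent $p$-Laplace-type equation, thereby reducing (ii) to (i). The four-case ratio analysis then concludes exactly as in (i).

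The hard part will be the \emph{sharpness} of the classification --- namely, upgrading the two-sided Harnack-type comparability of a singular positive solution with the fundamental profile $\mu$ (which one obtains readily from standard barrier constructions) to the existence of a genuine limit $u(x)/\mu(x)\to\gamma\in(0,\infty)$. This precise asymptotic is precisely what makes the ``singular/singular'' case of the ratio analysis produce a finite positive limit rather than merely two-sided bounds, and it is the crucial Kichenassamy--V\'eron input. Everything else --- the linear reduction to $\mathbb{A}=I$, the removable singularity and strong maximum principle step at the extended point, and the pairwise case analysis on $(u,v)$ --- is routine.
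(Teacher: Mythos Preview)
The paper does not actually prove this statement; it simply refers the reader to \cite[Theorem 4.1]{giri_pinchover}. Your outline is essentially the standard argument and is almost certainly what the cited reference does: the linear change of variables $y=\mathbb{A}^{-1/2}x$ reduces the problem to $\mathbb{A}=I$, and then part (i) follows from the Kichenassamy--V\'eron classification of isolated singularities of positive $p$-harmonic functions, exactly through the four-case ratio analysis you describe. Your identification of the ``hard part'' (upgrading two-sided comparability with $\mu$ to a genuine limit) is accurate, and it is precisely the content one is importing from the literature.

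One caution on part (ii): the Kelvin-inversion route you mention is a red herring for $p>d$. The map $x\mapsto x/|x|^{2}$ preserves $p$-harmonicity only when $p=d$ (the conformally invariant case); for $p\neq d,2$ the transformed equation acquires a singular weight $|y|^{\alpha}$ and is no longer the $p$-Laplacian, so you cannot feed it back into the part-(i) classification without reproving that classification for the weighted operator. Your first alternative --- the direct classification at infinity for $p\geq d$ via scaling, Harnack, and barrier arguments (or simply citing the exterior-domain analogue of Kichenassamy--V\'eron, which is also classical; see e.g.\ \cite{frass_pinchover, V}) --- is the correct path and should be the one you commit to.
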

\noindent Next we recall  the notion of positive solutions of minimal growth \cite{frass_pinchover,PR, PT, giri_pinchover}.
\begin{defin}{\em 
		(1) Let $K\Subset \Omega$. A positive solution $u$ of \eqref{plaplace} in $\Omega\setminus K$ is said to be a {\it positive solution of minimal growth in a neighbourhood of infinity in} $\Omega$ if for any $K'\Subset\Omega$ with smooth boundary such that  $K\Subset \mbox{int}\, K'$, and any positive supersolution $v\in C(\Omega\setminus\mbox{int}\,K')$ of \eqref{plaplace} in $\Omega\setminus K'$, we have
		$$u\leq v\quad\mbox{ on } \partial K' \quad \Rightarrow \quad u\leq v \quad \mbox{ in }\Omega\setminus K'.$$
		
		(2) A positive solution of \eqref{plaplace} in $\Omega$ which has minimal growth in a neighbourhood of infinity in $\Omega$ is called a {\em minimal positive solution}  of \eqref{plaplace} in $\Gw$.
		
		(3) Let $\zeta\in \partial\hat{\Omega}$ and let $u$ be a positive solution of \eqref{plaplace} in $\Omega$. Then $u$ is said to be a {\it positive solution of minimal growth in a neighbourhood of} $\partial\hat{\Omega}\setminus\{\zeta\}$ if for any relative punctured neighbourhood $K\Subset \hat{\Omega}$ of $\zeta$ such that $\Gamma:= \partial K\cap \Omega$ is smooth, and any positive supersolution $v\in C((\Omega\setminus K)\cup \Gamma)$ of \eqref{plaplace} in $\Omega\setminus K$, we have 
		$$u\leq v\quad \mbox{ on } \Gamma \quad \Rightarrow \quad u\leq v\quad \mbox{ in } \Omega\setminus K.$$
	}
\end{defin}
Next we discuss the existence of a positive solution of minimal growth of $Q(u)=0$  in a neighbourhood of $\partial\hat{\Omega}\setminus\{\zeta\}$, where $\zeta\in \{0,\infty\}$ is an  isolated singular point. 
\begin{theorem}\label{mn_growth1}
	Let $1<p<d$. Suppose that $\mathcal{Q}_{p,A,V}\geq $ in $\Gw$, where $A, V$ satisfy Assumptions~\ref{assump}. Assume that the operator $Q_{p,A,V}$ has an isolated Fuchsian-type singularity at $\zeta\in\partial\hat{\Omega}$, where $\gz\in\{0,\infty\}$. Then the equation $Q_{p,A,V}(u)=0$ in $\Omega$ admits a positive solution  of minimal growth in a neighbourhood of $\partial\hat{\Omega}\setminus\{\zeta\}$.
\end{theorem}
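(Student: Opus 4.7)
The plan is a Perron-type exhaustion argument in which the source terms of auxiliary Dirichlet problems are pushed toward the singular point $\zeta$. Fix a reference point $x_0\in\Omega$ together with a sequence $y_n\in\Omega$ satisfying $y_n\to\zeta$ and $y_n\neq x_0$, and choose nonnegative, non-trivial bump functions $f_n\in C_c^\infty(\Omega)$ supported in small balls $B_{r_n}(y_n)\Subset\Omega$ disjoint from a fixed neighborhood of $x_0$. Let $\{\Omega_n\}$ be a Lipschitz compact exhaustion of $\Omega$ with $\{x_0\}\cup\operatorname{supp}(f_n)\subset\Omega_n$. Since $\mathcal{Q}_{p,A,V}\ge 0$ in $\Omega$, Theorem~\ref{cn_positivity} gives $\Lambda_1(Q;\Omega_n)>0$, and Theorem~\ref{thm_unique} yields a unique positive solution $u_n\in W_0^{1,p}(\Omega_n)$ of the Dirichlet problem
\begin{equation*}
Q(u_n)=f_n\ \text{in}\ \Omega_n,\qquad u_n=0\ \text{on}\ \partial\Omega_n,
\end{equation*}
which is continuous on $\overline{\Omega_n}$ by Corollary~\ref{mdlus_cont}. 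Rescaling $f_n$, we may assume $u_n(x_0)=1$.

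Since $y_n\to\zeta\notin\Omega$, for every compact $K'\Subset\Omega$ one has $\operatorname{supp}(f_n)\cap K'=\varnothing$ for all $n$ large enough, so $u_n$ is a positive continuous solution of the homogeneous equation on some fixed open neighborhood of $K'$. By choosing an auxiliary Lipschitz compact exhaustion $\{\hat\Omega_n\}$ of $\Omega$ with $\hat\Omega_n\Subset\Omega_n\setminus\overline{\operatorname{supp}(f_n)}$ (achievable by shrinking $r_n$ and, if $\zeta$ is finite, by intersecting with the complement of a ball $B_{\delta_n}(\zeta)\supset\operatorname{supp}(f_n)$, $\delta_n\downarrow 0$), the Harnack convergence principle (Theorem~\ref{hrnck_principle}, with constant data $A_n\equiv A$, $V_n\equiv V$) applies. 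After diagonal extraction one obtains a subsequence, still denoted $\{u_n\}$, converging weakly in $W^{1,p}_{\loc}(\Omega)$ and locally uniformly to a positive solution $u\in W^{1,p}_{\loc}(\Omega)\cap C(\Omega)$ of $Q(u)=0$ in $\Omega$ with $u(x_0)=1$.

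To verify the minimal growth property, fix a relative punctured neighborhood $K\Subset\hat\Omega$ of $\zeta$ with smooth compact $\Gamma=\partial K\cap\Omega$, and a positive supersolution $v\in C((\Omega\setminus K)\cup\Gamma)$ of $Q(u)=0$ in $\Omega\setminus K$ with $u\le v$ on $\Gamma$. For $n$ large enough, $y_n\in K$ so $\operatorname{supp}(f_n)\subset K$; hence $u_n$ solves the homogeneous equation in the Lipschitz subdomain $\omega_n:=\Omega_n\setminus\overline{K}\Subset\Omega$, and $u_n=0$ on $\partial\Omega_n\cap\partial\omega_n$. Fix $\eta\in(0,1)$. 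Uniform convergence $u_n\to u$ on the compact set $\Gamma$, together with the continuity and strict positivity of $v$ on $\Gamma$ and the inequality $u\le v$ there, implies $\eta u_n\le v$ on $\Gamma$ for all sufficiently large $n$. Since $Q$ is $(p-1)$-homogeneous on solutions, $\eta u_n$ is itself a solution of $Q(u)=0$ in $\omega_n$, vanishing on $\partial\Omega_n\cap\partial\omega_n$; thus $\eta u_n\le v$ on $\partial\omega_n$. Theorem~\ref{wcp}, applied with $v_1=\eta u_n$ and the positive supersolution $v_2=v$, then gives $\eta u_n\le v$ throughout $\omega_n$. Letting $n\to\infty$ yields $\eta u\le v$ on $\Omega\setminus K$, and sending $\eta\nearrow 1$ produces $u\le v$ on $\Omega\setminus K$, as required.

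The chief technical obstacle is the comparison step: since only the limiting inequality $u\le v$ is available on $\Gamma$ rather than $u_n\le v$, the scaling $\eta<1$ is necessary to bridge the gap, and this in turn requires uniform convergence of $\{u_n\}$ on the compact hypersurface $\Gamma$. The latter rests on the local Harnack inequality and the uniform modulus-of-continuity estimate in Corollary~\ref{mdlus_cont}, both of which demand a quantitative local $\mathfrak{W}^p$-bound on $V$ that is supplied by the Fuchsian hypothesis near $\zeta$; a secondary technicality is the construction of the auxiliary exhaustion $\{\hat\Omega_n\}$ so that Theorem~\ref{hrnck_principle} may be invoked despite the non-zero right-hand side of the Dirichlet problem for $u_n$.
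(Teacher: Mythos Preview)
Your approach is the standard exhaustion-plus-comparison construction the paper invokes by citing \cite{pinchover_psaradakis,giri_pinchover}, and in outline it is correct. One step, however, does not go through as written: in the comparison you invoke Theorem~\ref{wcp} with $v_2=v$, but that theorem requires $Q_{p,A,V}(v_2)=g$ to hold as an \emph{equation} for some nonnegative $g\in\mathfrak{W}^p(\omega_n)$. The supersolution $v$ in the definition of minimal growth satisfies only the distributional inequality $Q_{p,A,V}(v)\ge 0$, which a priori yields a nonnegative measure, not a function in $\mathfrak{W}^p$; so Theorem~\ref{wcp} is not literally applicable.

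The repair is short and uses only tools already in the paper. Work directly with Theorem~\ref{existence_th} on $\omega_n=\Omega_n\setminus\overline K$ with $g=0$ and boundary datum $f=v|_{\partial\omega_n}$. First apply it with $(u_1,u_2)=(\eta u_n,\,c_n v)$, where $c_n:=\max\{1,\ \sup_{\overline{\omega_n}}(\eta u_n)/\inf_{\overline{\omega_n}}v\}$ guarantees $0\le u_1\le u_2$ in $\omega_n$; this produces a solution $w_1$ of $Q=0$ with $w_1=v$ on $\partial\omega_n$ and $\eta u_n\le w_1$. Then apply it with $(u_1,u_2)=(0,v)$ to obtain a solution $w_2$ with $w_2=v$ on $\partial\omega_n$ and $w_2\le v$. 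Since $f=v>0$ on $\partial\omega_n$, the uniqueness clause of Theorem~\ref{existence_th} gives $w_1=w_2$, hence $\eta u_n\le v$ in $\omega_n$, and the rest of your argument goes through.

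A minor side remark: the local Harnack inequality and Corollary~\ref{mdlus_cont} hold for any $V\in\mathfrak{W}^p_{\loc}(\Omega)$ on interior compacta, so the Fuchsian hypothesis is not actually needed for the convergence step; it enters only later, in the \emph{uniform} Harnack inequality near~$\zeta$.
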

\begin{proof}
	The proof follows the same steps as in  \cite[Theorem 5.7]{pinchover_psaradakis} and \cite[Proposition 3.9]{giri_pinchover}, in which $V$ is assumed to be in $M^q_{\loc}(p;\Gw)$. 
\end{proof}
\begin{remark}{\em 
		It turns out that $\Phi$ is a ground state of \eqref{plaplace} in $\Gw$ if and only if $\Phi$ is a positive minimal positive solution (see, \cite[Theorem~5.9]{pinchover_psaradakis}). }
\end{remark}
The next lemma concerns a monotonicity property for the quotient of two positive solutions near the isolated singular point $\zeta\in \{0,\infty\}$.  
\begin{lem}\label{lemma_1}
	Let $1<p<d$, and  $A$, $V$ satisfy Assumptions~\ref{assump}. Assume that $u, v\in \mathcal{G}_\zeta$ are defined in a punctured neighbourhood $\Omega'$ of $\zeta\in \{0,\infty\}$. For $r>0$,  denote $\partial E_A(r):=\{x\in\mathbb{R}^d: |x|_{A^{-1}}=r\}$  where $A\in\mathbb{R}^{d\times d}$ is a symmetric, positive definite matrix (in particular if $A=I$ then $\partial E_A(r)=S_r$). Define
	\be\label{eq_mM}
	m_r:=\underset{\partial E_A(r)\cap \Omega'}{\inf}\,\frac{u(x)}{v(x)}\,,\hspace*{0.5cm}M_r:=\underset{\partial E_A(r)\cap \Omega'}{\sup}\,\frac{u(x)}{v(x)}\,.
	\ee
	The following assertions hold: 
	\begin{itemize}
		\item[(i)] The functions $m_r$ and $M_r$ are finally monotone as $r\rightarrow\zeta$. In particular, there are numbers $0\leq m\leq M\leq \infty$ such that 
		\be\label{eq_lim_mM}
		m:=\underset{r\rightarrow\zeta}{\lim}\, m_r, \quad \mbox{and} \quad M:=\underset{r\rightarrow\zeta}{\lim}\, M_r.
		\ee
		\item[(ii)] Suppose further that $u$ and $v$ are both positive solutions of \eqref{fuch_p_laplace} in $\Omega$ of minimal growth in $\partial \hat{\Gw}\setminus\{\zeta\}$, then $0<m\leq M<\infty$ and $m_r\searrow m$ and $M_r\nearrow M$ when $r\rightarrow\zeta$.
	\end{itemize}
\end{lem}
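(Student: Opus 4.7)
Throughout I treat $\zeta = 0$ for concreteness (the case $\zeta = \infty$ is symmetric after reversing radii). I write $\Xi_{r_1,r_2} := \{x : r_1 < |x|_{A^{-1}} < r_2\}$ for the ellipsoidal annulus and exploit that $Q_{p,A,V}$ is $(p-1)$-homogeneous, so $cv$ is a positive solution whenever $v$ is, for every $c > 0$.

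For \emph{part (i)} the plan is to show that $r \mapsto M_r$ is quasi-convex and $r \mapsto m_r$ is quasi-concave on a right neighborhood of $0$. Fixing $0 < r_1 < r_2$ so small that $\overline{\Xi_{r_1,r_2}} \Subset \Omega'$, and setting $\mu := \max(M_{r_1}, M_{r_2})$, one has $u \leq \mu v$ on $\partial\Xi_{r_1,r_2} = \partial E_A(r_1) \cup \partial E_A(r_2)$ while $\mu v$ is a positive solution. The weak comparison principle (Theorem~\ref{wcp}), applied with $v_1 = u$ and $v_2 = \mu v$, yields $u \leq \mu v$ in $\Xi_{r_1,r_2}$, hence $M_r \leq \max(M_{r_1}, M_{r_2})$ for every $r \in (r_1, r_2)$. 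The mirror argument, applied with $v_1 = \nu v$ and $v_2 = u$ where $\nu := \min(m_{r_1}, m_{r_2})$, gives $m_r \geq \min(m_{r_1}, m_{r_2})$. Since $u,v$ are continuous (Corollary~\ref{mdlus_cont}), both $r \mapsto M_r$ and $r \mapsto m_r$ are continuous, and a continuous quasi-convex (resp.\ quasi-concave) function on an interval is monotone in a neighborhood of each endpoint. This gives eventual monotonicity as $r \to 0$, the existence of $m, M \in [0,\infty]$ in \eqref{eq_lim_mM}, and $m \leq M$.

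For \emph{part (ii)}, the further assumption is that $u$ and $v$ are positive solutions in $\Omega$ of minimal growth in $\partial\hat\Omega \setminus \{\zeta\}$. To pin down the direction of monotonicity, fix small $r > 0$ and let $K_r$ be the relative punctured neighborhood of $\zeta$ bounded inside $\Omega$ by $\Gamma_r := \partial E_A(r) \cap \Omega$. On $\Gamma_r$ one has $u \leq M_r v$, and $M_r v$ is a positive supersolution of \eqref{fuch_p_laplace} in $\Omega \setminus K_r$; minimal growth of $u$ in $\partial\hat\Omega \setminus \{\zeta\}$ then forces $u \leq M_r v$ throughout $\Omega \setminus K_r$, so $M_{r'} \leq M_r$ for every $r' > r$, i.e., $M_r \nearrow M$ as $r \to 0$. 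The symmetric step, using $v \leq m_r^{-1} u$ on $\Gamma_r$ and minimal growth of $v$, gives $m_r \searrow m$. To establish $M < \infty$ (and symmetrically $m > 0$), I argue by contradiction through the scaling $x \mapsto r_n x$ set up in this section. Assuming $M_{r_n} \to \infty$ along some $r_n \to 0$, pick $x_n \in \partial E_A(r_n)$ realizing $M_{r_n}$ and fix a reference point $y_0$ on $\partial E_A(1)$. The normalized rescales $\tilde u_n(y) := u(r_n y)/u(r_n y_0)$ and $\tilde v_n(y) := v(r_n y)/v(r_n y_0)$ solve the scaled equations, and the Fuchsian invariance \eqref{fucheqn_1} guarantees that the rescaled potentials $V_{r_n}$ are uniformly bounded in $\mathfrak{W}^p$ on each fixed annulus of $\R^d \setminus \{0\}$. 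Passing to subsequences, the Harnack convergence principle (Theorem~\ref{hrnck_principle}) produces limits $\tilde u_\infty, \tilde v_\infty$ which are positive solutions of the corresponding limiting dilated equation \eqref{dilated_eqn}. Extracting $y_n := x_n/r_n \to y_\infty \in \partial E_A(1)$, one has $\tilde u_n(y_n)/\tilde v_n(y_n) \to \tilde u_\infty(y_\infty)/\tilde v_\infty(y_\infty) \in (0,\infty)$, whereas $\tilde u_n(y_n)/\tilde v_n(y_n) = M_{r_n}\cdot v(r_n y_0)/u(r_n y_0)$ with the reference ratio trapped in a compact subinterval of $(0,\infty)$ by the local Harnack inequality (Theorem~\ref{thm_hrnck}) applied to $\tilde u_n, \tilde v_n$ on a fixed compact set. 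This contradicts $M_{r_n} \to \infty$ and gives $M < \infty$; the same argument applied to $v/u$ gives $m > 0$.

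The delicate step is the finiteness assertion in (ii). One must use the Fuchsian hypothesis to secure convergence of the rescaled coefficients in the correct topology required by Theorem~\ref{hrnck_principle} (weak convergence for $1 < p \leq 2$ and convergence in the quasinorm for $p > 2$), and simultaneously pin down the reference ratio $v(r_n y_0)/u(r_n y_0)$ via local Harnack so that the alleged blow-up of $M_{r_n}$ actually survives the normalization and produces the desired contradiction.
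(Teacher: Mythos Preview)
Your treatment of part~(i) and the monotonicity direction in part~(ii) is essentially the standard argument (as in \cite{frass_pinchover,giri_pinchover}): quasi-convexity of $M_r$ and quasi-concavity of $m_r$ via the weak comparison principle on ellipsoidal annuli, and the definition of minimal growth to fix the direction of monotonicity. This matches the approach the paper refers to.

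The genuine gap is in your finiteness argument for part~(ii). You claim that the ``reference ratio'' $v(r_n y_0)/u(r_n y_0)$ is trapped in a compact subinterval of $(0,\infty)$ by applying the local Harnack inequality to $\tilde u_n,\tilde v_n$; but Harnack applied to the \emph{normalized} functions gives only $\tilde u_n(y_0)=\tilde v_n(y_0)=1$, which carries no information about the original ratio $u(r_n y_0)/v(r_n y_0)$. Since $r_n y_0\in\partial E_A(r_n)$, all you know is $u(r_n y_0)/v(r_n y_0)\in[m_{r_n},M_{r_n}]$, and the upper bound $M_{r_n}$ is exactly the quantity you are assuming blows up --- the argument is circular. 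Passing to limit solutions via the Harnack convergence principle does not rescue this, and for $p>2$ it introduces an additional unjustified step (boundedness in the Wolff quasinorm does not by itself yield a subsequence convergent in the quasinorm, which is what Theorem~\ref{hrnck_principle} requires).

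The standard and much shorter route, once you have the monotonicity $M_r\nearrow$ and $m_r\searrow$ as $r\to\zeta$, is to combine it with the uniform Harnack inequality (Theorem~\ref{u_hrnck}, which needs the Fuchsian hypothesis implicit in this section): $M_r\le C\,m_r$ with $C$ independent of $r$. Then for any fixed $r_0$ and all $r$ closer to $\zeta$ than $r_0$, $M_r\le C\,m_r\le C\,m_{r_0}<\infty$ (using $m_r\le m_{r_0}$) and $m_r\ge C^{-1}M_r\ge C^{-1}M_{r_0}>0$ (using $M_r\ge M_{r_0}$). Note that the uniform Harnack bound $M_r\le C m_r$ already follows from your rescaling by applying the \emph{local} Harnack inequality (uniform constant thanks to \eqref{fucheqn_1}) separately to $\tilde u_n$ and $\tilde v_n$ on a fixed annulus --- no passage to limit solutions is needed.
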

The proof of Lemma~\ref{lemma_1}  is similar to  \cite[Lemma~4.2]{frass_pinchover} for the case when $A=I$ and $V\in L^\infty_{\loc}(\Omega)$. We also refer to \cite[Lemma 3.14]{giri_pinchover} in which $V\in M^q_{\loc}(p,\Gw)$. Part (ii) of Lemma~\ref{lemma_1} implies the following result.
\begin{cor}\label{cor11}
	Suppose that operator $Q_{p,A, V}$ has a Fuchsian-type isolated singularity at $\zeta \in\{0,\infty\}$. Let $u, v$ be two positive solutions of \eqref{fuch_p_laplace} of minimal growth in a neighbourhood of $\partial\hat{\Omega}\setminus\{\zeta\}$. Then
	$$mv(x)\leq u(x)\leq M v(x)\quad x\in\Omega,$$
	where $0<m\leq M<\infty$ are defined in \eqref{eq_lim_mM}.
\end{cor}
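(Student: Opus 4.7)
My proof plan is to combine Lemma~\ref{lemma_1}(ii) with the minimal growth property of $u$ and $v$, exploiting the $(p-1)$-homogeneity of $Q_{p,A,V}$, which allows one to multiply positive solutions by positive constants and stay in the class of positive (super)solutions.

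First, I invoke Lemma~\ref{lemma_1}(ii), which yields $0<m\leq M<\infty$ together with $m_r\searrow m$ and $M_r\nearrow M$ as $r\to\zeta$. Consequently, for every $r$ sufficiently close to $\zeta$ one has $m\leq m_r$ and $M_r\leq M$, so the definition of $m_r,M_r$ in \eqref{eq_mM} immediately gives
\[
m\,v(x)\leq u(x)\leq M\,v(x)\qquad\text{for all }x\in\Gamma_r:=\partial E_A(r)\cap\Omega.
\]
I would choose $r$ close enough to $\zeta$ so that $\Gamma_r$ is a smooth hypersurface bounding a relative punctured neighbourhood $K_r\Subset\hat{\Omega}$ of $\zeta$ with $\partial K_r\cap\Omega=\Gamma_r$; this is possible since the level sets of $|x|_{A^{-1}}$ are ellipsoids and $\zeta$ is isolated in $\partial\hat{\Omega}$.

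Next, I would apply the minimal growth properties. Because $Q_{p,A,V}$ is $(p-1)$-homogeneous, both $Mv$ and $m^{-1}u$ are positive solutions, hence positive supersolutions, of \eqref{fuch_p_laplace} in $\Omega$. Since $u\leq Mv$ on $\Gamma_r$ and $u$ has minimal growth in a neighbourhood of $\partial\hat{\Omega}\setminus\{\zeta\}$, the defining property of minimal growth applied with $K=K_r$ and supersolution $Mv$ yields $u\leq Mv$ on $\Omega\setminus K_r$. Symmetrically, $v\leq m^{-1}u$ on $\Gamma_r$ and the minimal growth of $v$ produce $mv\leq u$ on $\Omega\setminus K_r$.

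Finally, letting $r\to\zeta$ along an appropriate monotone sequence, the sets $K_r$ form a neighbourhood basis of $\zeta$ in $\hat{\Omega}$; since $\zeta\notin\Omega$, each $x\in\Omega$ belongs to $\Omega\setminus K_r$ for all $r$ sufficiently close to $\zeta$. Hence the inequality $mv\leq u\leq Mv$ propagates to the whole of $\Omega$. The only mildly technical point I anticipate is the verification that the ellipsoidal level sets $\Gamma_r$ and the associated domains $K_r$ satisfy the admissibility conditions in the definition of minimal growth (smoothness of $\Gamma_r$ and $K_r\Subset\hat{\Omega}$); this follows from the explicit shape of $\partial E_A(r)$ together with the isolated character of $\zeta$, after which the rest of the argument is a direct monotone passage to the limit.
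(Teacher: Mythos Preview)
Your proof is correct and is precisely the argument the paper has in mind: the corollary is stated as an immediate consequence of Lemma~\ref{lemma_1}(ii), and the implicit reasoning (spelled out in \cite[Lemma~4.2]{frass_pinchover}) is exactly the one you give---use the monotonicity $m_r\searrow m$, $M_r\nearrow M$ to compare $u$ with $mv$ and $Mv$ on $\Gamma_r$, then apply the minimal growth of $u$ and $v$ (together with homogeneity) to propagate the bounds to $\Omega\setminus K_r$, and let $r\to\zeta$.
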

As in \cite{frass_pinchover, giri_pinchover}, where $V\in L_{\loc}^\infty(\Gw)$ or $V\in \mathfrak{W}^p_{\loc}(\Gw)$, the regularity at $\gz\in\{0,\infty\}$ imply a Picard-type principle and a positive Liouville-type theorem, respectively.
\begin{prop}\label{mn_growth}
	Suppose that the operator $Q_{p,A,V}$ has and an isolated  Fuchsian-type singularity at $\zeta\in\{0,\infty\}$ which is regular. Then  \eqref{fuch_p_laplace} admits a unique (up to a multiplicative constant) positive solution in $\Omega$ of minimal growth in a neighbourhood of $\partial\hat{\Omega}\setminus\{\zeta\}$.
\end{prop}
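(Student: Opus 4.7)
The existence of such a minimal positive solution in $\Omega$ is already supplied by Theorem~\ref{mn_growth1}, so the task reduces to proving uniqueness (up to a positive multiplicative constant). The plan is the classical one: take two positive solutions $u$ and $v$ of \eqref{fuch_p_laplace} in $\Omega$, each of minimal growth in a neighbourhood of $\partial\hat{\Omega}\setminus\{\zeta\}$, and show $u \equiv c\,v$ for some $c>0$ by combining the two-sided pinch from the minimality with the ratio-limit property coming from the regularity hypothesis at $\zeta$.

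First, I would record that both $u$ and $v$ define germs in $\mathcal{G}_\zeta$ (they are, in particular, positive solutions in a relative punctured neighbourhood of $\zeta$). Applying Lemma~\ref{lemma_1} to the pair $(u,v)$ produces the quantities $m_r, M_r$ defined in \eqref{eq_mM} together with their (eventually monotone) limits $m$ and $M$ in \eqref{eq_lim_mM}. Because both $u$ and $v$ are of minimal growth outside $\{\zeta\}$, Corollary~\ref{cor11} applies and gives the essential quantitative information
\[
0 < m \le M < \infty, \qquad m\,v(x) \le u(x) \le M\,v(x) \quad \text{for all } x\in\Omega.
\]

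Next, I would invoke the regularity of $\zeta$: by hypothesis, the ratio $u(x)/v(x)$ admits a generalized limit $L:=\lim_{x\to\zeta} u(x)/v(x)$. Choosing a sequence realising $m_r\to m$ along $\partial E_A(r)\cap\Omega'$ and another realising $M_r\to M$ forces $L = m = M$; combined with the previous step, this common value lies in $(0,\infty)$. Setting $c := m = M$, the pinch inequality collapses to $u(x) = c\,v(x)$ on $\Omega$, yielding uniqueness.

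The argument is essentially a bookkeeping combination of three earlier results, so I do not anticipate a genuine obstacle; the only point that needs care is to verify that the regularity assumption really does force $m=M$ even when the generalized limit is a priori allowed to be $0$ or $\infty$. This is handled precisely by Corollary~\ref{cor11}, which rules out those degenerate values for two minimal solutions, so the generalized limit is automatically a finite positive number equal to both $m$ and $M$. Once that observation is made, the conclusion is immediate.
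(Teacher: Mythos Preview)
Your proposal is correct and follows essentially the same argument as the paper: existence via Theorem~\ref{mn_growth1}, then for uniqueness apply Corollary~\ref{cor11} to obtain $0<m\le M<\infty$ and the two-sided bound $mv\le u\le Mv$, and finally use the regularity of $\zeta$ to force $m=M$, whence $u=Mv$. Your extra remark that Corollary~\ref{cor11} excludes the degenerate cases $L\in\{0,\infty\}$ is a welcome clarification, but otherwise the proofs coincide.
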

\begin{proof}
	{\bf Existence:}  Follows from Theorem~\ref{mn_growth1}.
	
	{\bf Uniqueness:}	Let $u$ and $v$ be two solutions of  \eqref{fuch_p_laplace} of minimal growth in a neighbourhood of $\partial\hat{\Omega}\setminus\{\zeta\}$. Then Corollary~\ref{cor11} implies
	$$ mv(x)\leq u(x)\leq M v(x)\quad x\in\Omega,$$
	where $0<m\leq M<\infty$ are defined in \eqref{eq_mM} and \eqref{eq_lim_mM}.
	In addition, since $\zeta$ is a regular point, it follows that  
	$$\underset{x\in\Omega}{\displaystyle{\lim_{x\rightarrow \zeta}}} \, \frac{u(x)}{v(x)}\quad\mbox{ exists and is positive}.$$
	Therefore, we get $m=M$ and hence $u(x)=Mv(x)$.
\end{proof}
Let $\mathcal{A}_r:= (B_{3r/2}\setminus\bar{B}_{r/2})\cap \Omega'$ be an annular set, where $\Gw'$ is a punctured neighbourhood of $\gz$.  For $u, v\in \mathcal{G}_\zeta$, denote 
$$\mathbf{a}_r:= \underset{x\in \mathcal{A}_r}{\inf} \frac{u(x)}{v(x)}\,,\hspace*{1cm}\mathbf{A}_r:=\underset{x\in \mathcal{A}_r}{\sup} \frac{u(x)}{v(x)}\,.$$
The local Harnack inequality (Theorem~\ref{thm_hrnck}) implies that there exists $r_0>0$ such that 
$0< \mathbf{A}_r \leq C_r \mathbf{a}_r<\infty$ for every $0<r<r_0$. For a Fuchsian-type singularity of  \eqref{fuch_p_laplace} at $\zeta\in\{0,\infty\}$ we have the following uniform Harnack inequality in annular sets $\mathcal{A}_r$ for $r$ near $\zeta$. 
\begin{theo}[Uniform Harnack inequality]\label{u_hrnck}
	Let $1<p<d$.  Suppose that $A$  and $V$ satisfy Assumptions~\ref{assump}. Further assume that $Q_{p,A,V}$  has an isolated Fuchsian-type singularity at $\zeta  \in \{0,\infty\}$.  Let $u, v\in \mathcal{G}_\zeta$. Then there exists positive constant $C$ independent of $r$,  $u$ and $v$ such that 
	$$ \mathbf{A}_r\leq C\mathbf{a}_r\qquad \forall \,  r \mbox{ near } \zeta.$$
\end{theo}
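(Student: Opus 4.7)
The plan is a proof by contradiction using a rescaling argument, together with the Harnack convergence principle (Theorem~\ref{hrnck_principle}) and the local Harnack inequality (Theorem~\ref{thm_hrnck}). Suppose the conclusion fails. Then there exist a sequence $\{r_n\}$ with $r_n\to\zeta$ and positive solutions $u_n,v_n\in\mathcal{G}_\zeta$ (all defined in a common punctured neighbourhood $\Omega'$ of $\zeta$) such that $\mathbf{A}_{r_n}(u_n,v_n)/\mathbf{a}_{r_n}(u_n,v_n)\to\infty$.

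Fix a reference point $x_0$ in the fixed annulus $\tilde{\mathcal{A}}:=\{1/2\leq|x|\leq 3/2\}$, and introduce the rescaled solutions
\[
\tilde u_n(x):=\frac{u_n(r_n x)}{u_n(r_n x_0)},\qquad \tilde v_n(x):=\frac{v_n(r_n x)}{v_n(r_n x_0)},
\]
defined on $\Omega'/r_n$. Each pair solves $Q_{p,A_{r_n},V_{r_n}}(w)=0$, is normalised by $\tilde u_n(x_0)=\tilde v_n(x_0)=1$, and by the scaling identity \eqref{fucheqn_1} one has $\|V_{r_n}\|_{\mathfrak{W}^p(\tilde{\mathcal{A}})}=\|V\|_{\mathfrak{W}^p(\mathcal{A}_{r_n})}\leq C$, uniformly in $n$; likewise the ellipticity constants of $A_{r_n}$ on $\tilde{\mathcal{A}}$ equal those of $A$ on $\mathcal{A}_{r_n}$ and are uniformly controlled by Fuchsian assumption (i). Because $\zeta$ is an isolated point of $\partial\hat\Omega$, the domains $\Omega'/r_n$ eventually exhaust $(\R^d)^*$.

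Passing to a subsequence, one can arrange $A_{r_n}\rightharpoonup \mathbb{A}$ in the weak topology of $L^\infty_{\loc}((\R^d)^*;\R^{d\times d})$, and (for $1<p\leq 2$) $V_{r_n}\rightharpoonup \mathbb{V}$ weakly in $\mathfrak{W}^p_{\loc}((\R^d)^*)$, or (for $p>2$) $V_{r_n}\to\mathbb{V}$ with respect to the quasinorm; the limits $(\mathbb{A},\mathbb{V})$ satisfy Assumptions~\ref{assump} by lower semicontinuity (exactly as used in the preceding lemma establishing preservation of the Fuchsian property under dilation). Theorem~\ref{hrnck_principle} then yields, along a further subsequence, $\tilde u_n\to\tilde u$ and $\tilde v_n\to\tilde v$ locally uniformly on $(\R^d)^*$, where $\tilde u,\tilde v$ are positive solutions of the limiting dilated equation $Q_{p,\mathbb{A},\mathbb{V}}(w)=0$ with $\tilde u(x_0)=\tilde v(x_0)=1$. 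Applying the local Harnack inequality to $\tilde u$ and $\tilde v$ on the compact set $\tilde{\mathcal{A}}$ gives a constant $K>0$ with $K^{-1}\tilde v\leq \tilde u\leq K\tilde v$ on $\tilde{\mathcal{A}}$. In particular $\tilde u/\tilde v$ is continuous and bounded away from $0$ and $\infty$ on $\tilde{\mathcal{A}}$, so by uniform convergence $\sup_{\tilde{\mathcal{A}}}(\tilde u_n/\tilde v_n)/\inf_{\tilde{\mathcal{A}}}(\tilde u_n/\tilde v_n)\leq 2K^2$ for large $n$. But by construction this ratio equals $\mathbf{A}_{r_n}/\mathbf{a}_{r_n}$ (the common normalising factor $u_n(r_n x_0)/v_n(r_n x_0)$ cancels), contradicting our assumption.

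The main obstacle is the extraction of a subsequence of $\{V_{r_n}\}$ that converges in the sense required by Theorem~\ref{hrnck_principle}, since the Wolff class is not known to be reflexive (for $1<p\leq 2$) nor even normed (for $p>2$). The way around this is to combine the uniform Wolff bound with Corollary~\ref{cor_2}, which embeds $\mathfrak{W}^p_{\loc}$ continuously into $L^1_{\loc}$: this yields first a weakly convergent $L^1$-subsequence, whose limit then belongs to $\mathfrak{W}^p_{\loc}$ by a Fatou-type argument (as employed in the proof preserving the Fuchsian property under dilation), and one checks that this weak $L^1$ convergence suffices to carry out the limit identification of the potential term in the weak formulation, exactly as in the proof of Theorem~\ref{hrnck_principle}.
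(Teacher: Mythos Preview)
Your approach is unnecessarily indirect and contains a genuine gap. The paper's proof is a direct, non-asymptotic argument: for each fixed $r$ near $\zeta$, the rescaled functions $u_r(x):=u(rx)$ and $v_r(x):=v(rx)$ solve $-\Delta_{p,A_r}w+V_r|w|^{p-2}w=0$ on the fixed annulus $\tilde{\mathcal{A}}$. By the Fuchsian assumption and \eqref{fucheqn_1}, the ellipticity constants of $A_r$ and the Wolff modulus $W_{V_r}(\diam(\tilde{\mathcal{A}}))$ are bounded \emph{uniformly in $r$}. Since the Harnack constant in Theorem~\ref{thm_hrnck} depends only on these quantities, one obtains $\sup_{\mathcal{A}} u_r\leq C\inf_{\mathcal{A}} u_r$ and the analogous estimate for $v_r$, with $C$ independent of $r$; combining these gives $\mathbf{A}_r\leq C^2\mathbf{a}_r$. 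No contradiction, no subsequence extraction, and no Harnack convergence principle are needed.

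Your route via Theorem~\ref{hrnck_principle} introduces a difficulty that you yourself identify but do not resolve. The proposed workaround fails: a uniform $L^1_{\loc}$ bound on $\{V_{r_n}\}$ does \emph{not} yield a weakly convergent subsequence in $L^1$, because $L^1$ is not reflexive (and not the dual of a separable space). At best one obtains weak-$*$ convergence in the sense of Radon measures, and the limit measure need not be absolutely continuous, let alone belong to $\mathfrak{W}^p_{\loc}$. For $p>2$ the situation is worse: Theorem~\ref{hrnck_principle} requires convergence in the quasinorm, and there is no compactness result that extracts quasinorm-convergent subsequences from quasinorm-bounded ones. The Fatou-type lemma you invoke from the dilation-preservation lemma controls the \emph{norm of a given limit}, but it does not manufacture a limit in the first place. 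Note, finally, that if you drop the passage to the limit and instead apply Theorem~\ref{thm_hrnck} directly to $\tilde u_n,\tilde v_n$ with the uniform constants you have already established, you recover exactly the paper's argument --- at which point the contradiction framework becomes superfluous.
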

\begin{proof}
	Let $\Gw'\subset \Gw$ be a fixed punctured neighbourhood of $\gz$ and $u, v\in \mathcal{G}_\zeta$ be two positive solutions in $\Gw'$. For $r>0$, consider the annular set $\tilde{\mathcal{A}}_r:= (B_{2r}\setminus\bar{B}_{r/4})\cap \Omega'$. By our assumption, $\zeta=0$ (or $\zeta=\infty$) is an isolated singular point. Therefore, for $r<r_0$ (respectively, $r>r_0$) $\mathcal{A}:=\mathcal{A}_r/r$ and $\tilde{\mathcal{A}}:=\tilde{\mathcal{A}}_r/r$ are fixed annulus with  $\mathcal{A}\Subset\tilde{\mathcal{A}}$.
	
	For such $r$, Let $u_r(x):=u(rx)$, $v_r(x):=v(rx)$, where $x\in\Omega'/r$. Then the functions $u_r$ and $v_r$ are positive solution of the equation 
	\begin{equation*}
		Q_r[u_r]:=-\Delta_{p,A_r}(u_r)+ V_r(x)|u_r|^{p-2}u_r=0 \,\,\text{in}\,\,\tilde{\mathcal{A}},
	\end{equation*}
	where $A_r(x):= A(rx)$ and $V_r=r^{p}V(rx)$.  By estimate \eqref{fucheqn_1},  the norms $\|V_r\|_{\mathfrak{W}^p(\tilde{\mathcal{A}})}$ of the scaled potentials are uniformly bounded $\tilde{\mathcal{A}}$.  Also, by \eqref{ell},  the scaled matrices $A_r(x)$ are uniformly elliptic  and bounded  in $\tilde{\mathcal{A}}$. Thus, by applying the local Harnack inequality (Theorem~\ref{thm_hrnck}) in the annular domain $\tilde{\mathcal{A}}$ we get
	$$\mathbf{A}_r=\underset{x\in \mathcal{A}_r}{\sup} \frac{u(x)}{v(x)}=\underset{x\in \mathcal{A}}{\sup} \frac{u_r(x)}{v_r(x)}\leq C\underset{x\in \mathcal{A}}{\inf} \frac{u_r(x)}{v_r(x)}=C\underset{x\in \mathcal{A}_r}{\inf} \frac{u(x)}{v(x)}=C \mathbf{a}_r,$$
	where the constant $C>0$ is independent of $u$ and $v$ for $r$ near $\gz$. 
\end{proof}
The next result is an extension to the case $V\in \mathfrak{W}^p_{\loc}(\Gw)$ of Proposition~3.18 in \cite{giri_pinchover}, where the potential $V$ is assumed to be in $M^q_{\loc}(p;\Omega)$.
\begin{theo}\label{regularity_thm}
	Let $1<p<d$, and  $A, V,$ satisfy Assumptions~\ref{assump}. Assume  that the operator $Q_{p,A,V}$ has an isolated Fuchsian-type singularity at $\zeta\in\partial\hat{\Omega}$, and there is a sequence $R_n\rightarrow \zeta$, such that either $0$ or $\infty$ is a regular point of a limiting dilated equation $\mathcal{D}^{\{R_n\}}(Q)(w)=0$ in $Y$. Then $\zeta$ is a regular point of the equation $Q_{p,A,V}(u)=0$ in $\Omega$.
\end{theo}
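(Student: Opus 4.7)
The plan is to show that for any two positive solutions $u,v \in \mathcal{G}_\zeta$ defined on a punctured neighbourhood $\Gw'$ of $\zeta$, the ratio $\Phi := u/v$ admits a limit at $\zeta$ in the generalized sense. Taking $A=I$ in Lemma~\ref{lemma_1}, set $m_r := \inf_{S_r\cap\Gw'}\Phi$ and $M_r := \sup_{S_r\cap\Gw'}\Phi$, so by Lemma~\ref{lemma_1}(i) the limits $m := \lim_{r\to\zeta} m_r$ and $M := \lim_{r\to\zeta} M_r$ exist in $[0,\infty]$. The uniform Harnack inequality (Theorem~\ref{u_hrnck}) gives $M_r\le Cm_r$ for $r$ near $\zeta$, which immediately handles the degenerate cases $m\in\{0,\infty\}$ (both force $M=m$); therefore I may assume $0<m\le M<\infty$, and it suffices to prove $M=m$.

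Next, I scale along the sequence $R_n\to\zeta$ supplied by the hypothesis. Fix $z_0\in Y$ with $|z_0|=1$ and form the normalized scaled solutions
\[
u_n(z):=\frac{u(R_nz)}{u(R_nz_0)},\qquad v_n(z):=\frac{v(R_nz)}{v(R_nz_0)},
\]
which solve $Q_{p,A_{R_n},V_{R_n}}(w)=0$ on the domains $\Gw/R_n$ and satisfy $u_n(z_0)=v_n(z_0)=1$. The uniform Wolff bound \eqref{fucheqn_1} on annuli, combined with standard compactness in $\mathfrak{W}^p_{\loc}(Y)$, furnishes the convergence of $\{V_{R_n}\}$ required to invoke the Harnack convergence principle (Theorem~\ref{hrnck_principle}); after passing to a subsequence, $u_n\to\tilde u$ and $v_n\to\tilde v$ locally uniformly on $Y=\R^d\setminus\{0\}$ to positive solutions of the limiting equation $\mathcal{D}^{\{R_n\}}(Q)(w)=0$ in $Y$. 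Since $c_n := \Phi(R_nz_0) \in [m_{R_n},M_{R_n}]$ is bounded, a further subsequence yields $c_n\to c_*\in[m,M]$.

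The rigidity at the heart of the argument is then the following. For any fixed $r>0$, the sphere $S_r$ is a compact subset of $Y$, and the identity $u_n(z)/v_n(z)=\Phi(R_nz)/c_n$ combined with local uniform convergence $u_n/v_n\to\tilde u/\tilde v$ gives
\[
\frac{M_{R_nr}}{c_n}\longrightarrow \tilde M_r := \sup_{S_r}\frac{\tilde u}{\tilde v},\qquad \frac{m_{R_nr}}{c_n}\longrightarrow \tilde m_r := \inf_{S_r}\frac{\tilde u}{\tilde v}.
\]
Since $R_nr\to\zeta$ with $r$ fixed, the numerators tend to $M$ and $m$ respectively; therefore $\tilde M_r\equiv M/c_*$ and $\tilde m_r\equiv m/c_*$ are \emph{constant in $r$}. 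Invoking the regularity of the limiting equation at $\zeta_0\in\{0,\infty\}$, the limit $L:=\lim_{z\to\zeta_0}\tilde u(z)/\tilde v(z)$ exists. Choosing extremizers $z_r^{\pm}\in S_r$ with $\tilde u(z_r^+)/\tilde v(z_r^+)=\tilde M_r$ and $\tilde u(z_r^-)/\tilde v(z_r^-)=\tilde m_r$, and letting $r\to\zeta_0$ (so that $z_r^{\pm}\to\zeta_0$), forces the two constants $M/c_*$ and $m/c_*$ to both equal $L$, whence $M=m$ and $\Phi$ attains the common limit $M=m$ at $\zeta$. The step requiring the most care is the verification of the Harnack convergence hypotheses for $\{V_{R_n}\}$ (weak convergence when $1<p\le 2$, quasinorm convergence when $p>2$), but this is precisely what \eqref{fucheqn_1} together with a subsequence extraction provides; the remainder of the proof is an unfolding of the scaling identity and the definition of the limit $L$.
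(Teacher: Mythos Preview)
Your argument is correct and follows essentially the same scaling approach as the paper. Two minor remarks: first, the paper normalizes both sequences by the common factor $u(R_n x_0)$, so that $u_n/v_n = u(R_n\,\cdot\,)/v(R_n\,\cdot\,)$ directly and no auxiliary constant $c_n$ appears; your separate normalization is equivalent but introduces the harmless factor $c_*$. Second, the convergence of $\{A_{R_n}\}$ and $\{V_{R_n}\}$ in the senses \eqref{dilated_conv1}--\eqref{dilated_conv2} is already part of the hypothesis (it is built into the very definition of the limiting dilated operator $\mathcal{D}^{\{R_n\}}(Q)$), so you need not appeal to any ``standard compactness in $\mathfrak{W}^p_{\loc}(Y)$''---which, incidentally, is not available for $p>2$ since the space is only quasinormed.
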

\begin{proof} The proof follows the same steps as in \cite[Proposition~3.18.]{giri_pinchover}. For the completeness, we provide the proof.
Let $u, v\in\mathcal{G}_\zeta$. Define
	\begin{equation*}\label{en1}
		m_r:= \underset{S_r\cap\Omega'}{\inf} \frac{u(x)}{v(x)},\quad M_r:= \underset{S_r\cap\Omega'}{\sup} \frac{u(x)}{v(x)},
	\end{equation*}
	where $\Omega'$ is a punctured neighbourhood of $\zeta$. By Lemma~\ref{lemma_1}, $M \! := \! \lim_{r\rightarrow\zeta}{M_r}$ and  $m:=\lim_{r\rightarrow\zeta}{m_r}$ exist in the generalized sense. Thus, we only need to  show that $M=m$. 
	
	If $M:=\lim_{r\rightarrow\zeta}{M_r}=\infty$ (respectively, $m:=\lim_{r\rightarrow\zeta}{m_r}=0$), then by the uniform Harnack inequality, Lemma~\ref{u_hrnck}, we get $m=\infty$ (respectively, $M=0$), and hence the limit 
	$$\underset{\substack{x\rightarrow\zeta\\x\in\Omega'}}{\lim}\frac{u(x)}{v(x)}\quad \text{exists in the generalized sense.}$$
	So, it can be assumed that $u\asymp v$ in some neighbourhood $\Omega'\subset \Omega$ of $\zeta$. Let us now fix $x_0\in \mathbb{R}^d$ such that $R_nx_0\in \Omega$ for all $n\in \mathbb{N}$ and consider
	\begin{equation*}\label{en2}
		u_n(x):= \frac{u(R_nx)}{u(R_nx_0)}\, ,\qquad v_n(x):=\frac{v(R_nx)}{u(R_nx_0)}\, .
	\end{equation*}
	Then by the definition of $\mathcal{G}_\zeta$, $u_n$ and $v_n$ are positive solutions of the equation
	\begin{equation*}\label{eqn}
		-\Delta_{p,A_{n}}(w)+ V_n(x)|w|^{p-2}w=0\quad \mbox{in } \Omega'/{R_n},
	\end{equation*}
	where $A_{n}(x):=A_{R_n}(x)$ and $V_n(x):=V_{R_n}(x)$, are the associated scaled matrix and potential, respectively. Since $u_n(x_0)=1$ and $v_n(x_0)\asymp 1$, the Harnack convergence principle (Proposition~\ref{hrnck_principle}) implies that the sequence $\{R_n\}$ admits a subsequence (still denoted by $\{R_n\}$) such that  
	\begin{equation*}
		\lim_{n\rightarrow\infty} u_n(x):=u_\infty(x), \mbox{ and } \lim_{n\rightarrow\infty}v_n(x):=v_\infty(x)
	\end{equation*}
	locally uniformly in $Y =\displaystyle{\lim_{n\rightarrow\infty} \Omega'/R_{n}}$, and $u_\infty$ and $v_\infty$ are positive solutions of the equation 
	\begin{equation}\label{dilated_eq}
		\mathcal{D}^{\{R_n\}}(Q)(w) = -\Delta_{p,\mathbb{A}}(w)+\mathbb{V}|w|^{p-2}w=0\qquad \text{in } Y.
	\end{equation}
	So,  for any fixed $R>0$ we get
	\begin{align*}
		\underset{x\in S_R}{\sup} \frac{u_\infty(x)}{v_\infty(x)} & \!=\!\underset{x\in S_R}{\sup}\lim_{n\rightarrow\infty}\frac{u_n(x)}{v_n(x)} =
		\lim_{n\rightarrow\infty}\underset{x\in S_R}{\sup}\frac{u_n(x)}{v_n(x)}\\[2mm]
		&\!=\!\lim_{n\rightarrow\infty}\underset{x\in S_R}{\sup}\frac{u(R_nx)}{v(R_nx)}\!=\!\lim_{n\rightarrow\infty}\underset{R_nx\in S_{RR_n}}{\sup}\frac{u(R_nx)}{v(R_nx)} \!=\! \lim_{n\rightarrow\infty} M_{RR_n} \!=\!M, 
	\end{align*}
	where we have used the existence of $\lim_{r\rightarrow\zeta}{M_r}=M$, and the local uniform convergence of the sequence $\{u_n/v_n\}$ in $Y$. Similarly, we get
	$$\underset{x\in S_R}{\inf} \frac{u_\infty(x)}{v_\infty(x)}=m.$$
	By our assumption, either $\zeta_1=0$ or $\zeta_1 =\infty$ is a regular point of the dilated equation \eqref{dilated_eq}, so the limit 
	$$\underset{\substack{x\rightarrow\zeta_1\\x\in Y}}{\lim}\frac{u_\infty(x)}{v_\infty(x)}\quad \text{exists.}$$
	Hence, $m=M$, this shows that 
	$$\underset{\substack{x\rightarrow\zeta\\x\in \Omega'}}{\lim}\frac{u(x)}{v(x)}\quad \text{exists.}$$
	Therefore,  $\zeta$ is a regular point of the equation $Q_{p,A,V}(u)=0$ in $\Omega$. 
\end{proof}
We now introduce the notion of a weak Fuchsian-type singularity, and prove a Liouville-type theorem for $Q_{p,A,V}$ having  a such singularity at $\gz$.
\begin{defin}{\em 
		Let $1<p<d$, and $A, V$ satisfy Assumptions~\ref{assump}.  Assume that the operator $Q_{p,A,V}$  has an isolated  Fuchsian-type singularity $\zeta\in\{0,\infty\}$. The operator $Q_{p,A,V}$ is said to have a {\em  weak Fuchsian-type singularity} at $\zeta$ if  there exist $m$ sequences $\{R_n^{(j)}\}_{n=1}^\infty \subset\mathbb{R}_+$, $1\leq j\leq m$, satisfying $R_n^{(j)}\rightarrow \zeta^{j}$, where $\zeta^{(1)}=\zeta$, and $\zeta^{(j)}=0$ or $\zeta^{(j)}=\infty$ for $2\leq j\leq m$, such that 
		\begin{equation*}\label{wfs}
			\mathcal{D}^{\{R_n^{(m)}\}}\circ\cdots\circ\mathcal{D}^{\{R_n^{(1)}\}}(Q)(u)=-\Delta_{p,\mathbb{A}} (u)\qquad \text{on }Y,
		\end{equation*}
		where  $\mathbb{A} \! \in \! \R^{d\times d}$  is a symmetric, positive definite matrix, and $Y\!= \!\!\underset{n\rightarrow \infty}{\lim} \Omega/R_n^{(1)}$.
	}
\end{defin}
\begin{theo}[Liouville theorem]\label{liouville_thm}
	Let $1<p<d$ and $A, V$ satisfy Assumptions~\ref{assump}.  Suppose that $\zeta\in\partial  \hat{\Omega}$ is an isolated singular point, where $\zeta =0$ or $\zeta=\infty$.  Further, assume that the operator $Q=Q_{p,A,V}$ has a weak Fuchsian-type singularity at $\zeta$. Then $\zeta$ is a regular point of \eqref{fuch_p_laplace}. 
	
	In other words, if $u$ and $v$ are two positive solutions of the equation $Q_{p,A,V}(w)=0$ in a punctured neighborhood of $\zeta$, then 
	\begin{itemize}
		\item[(i)] 
		$\;\;\displaystyle{\lim_{x\rightarrow\zeta}\frac{u(x)}{v(x)}}$  exists in the generalized sense.
		\item[(ii)] the equation $Q_{p,A,V}(w)=0$ admits a unique positive solution in $\Omega$ of minimal growth in a neighbourhood of $\partial\hat{\Omega}\setminus\{\zeta\}$.
	\end{itemize}
\end{theo}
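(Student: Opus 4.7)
\textbf{Proof proposal for Theorem~\ref{liouville_thm}.}

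The plan is to prove the theorem by descending induction along the $m$-step dilation process that defines the weak Fuchsian-type singularity, using the regularity of the constant-coefficient $(p,\mathbb{A})$-Laplacian as the base case and transferring regularity one step at a time via Theorem~\ref{regularity_thm}. Once regularity of $\zeta$ is established, assertion (ii) will follow immediately from Proposition~\ref{mn_growth}.

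To set up the induction, let $\{R_n^{(j)}\}_{n=1}^\infty$, $1\le j\le m$, be the sequences furnished by the weak Fuchsian hypothesis, with $R_n^{(j)}\to\zeta^{(j)}\in\{0,\infty\}$ and $\zeta^{(1)}=\zeta$. Set $Q^{(0)}:=Q_{p,A,V}$ and $Q^{(k)}:=\mathcal{D}^{\{R_n^{(k)}\}}(Q^{(k-1)})$ for $1\le k\le m$, so that by the definition of a weak Fuchsian-type singularity $Q^{(m)}=-\Delta_{p,\mathbb{A}}$ on $Y=(\mathbb{R}^d)^*$. By the preservation lemma stated just before Theorem~\ref{regularity_thm}, each intermediate operator $Q^{(k)}$ inherits an isolated Fuchsian-type singularity at the relevant point in $\{0,\infty\}$, so Theorem~\ref{regularity_thm} will be applicable at every stage of the descent.

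The base step uses Theorem~\ref{regularpoint_lap}(i): since $1<p<d$, the point $0$ is a regular point of $-\Delta_{p,\mathbb{A}}(w)=0$ in $\mathbb{R}^d\setminus\{0\}$, i.e., of $Q^{(m)}$ on $Y$. Assume inductively that $\zeta^{(k)}\in\{0,\infty\}$ is a regular point of $Q^{(k)}$ for some $1\le k\le m$. Since the limiting dilated equation $\mathcal{D}^{\{R_n^{(k)}\}}(Q^{(k-1)})=Q^{(k)}$ thus has either $0$ or $\infty$ as a regular point, the hypotheses of Theorem~\ref{regularity_thm} are satisfied for $Q^{(k-1)}$ together with the sequence $\{R_n^{(k)}\}$, and the conclusion is that $\zeta^{(k-1)}$ is a regular point of $Q^{(k-1)}$. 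Iterating this step from $k=m$ down to $k=1$ yields that $\zeta=\zeta^{(1)}$ is a regular point of $Q^{(0)}=Q_{p,A,V}$, which is precisely assertion (i). Then (ii) is immediate from Proposition~\ref{mn_growth} applied to $Q_{p,A,V}$ at the regular Fuchsian singularity $\zeta$.

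The main technical point to verify carefully in the induction is that, at each stage, the sequences of scaled coefficients $\{A_{R_n^{(k)}}\}$ and $\{V_{R_n^{(k)}}\}$ admit subsequences satisfying the convergence requirements \eqref{dilated_conv1}--\eqref{dilated_conv2}, and that the Fuchsian bound $\|V\|_{\mathfrak{W}^p(\mathcal{A}_R\cap\Omega')}\le C$ is indeed transmitted to each intermediate $Q^{(k)}$ so that Theorem~\ref{regularity_thm} genuinely applies. Both properties, however, are essentially built into the definition of weak Fuchsian-type singularity, together with the scaling identity \eqref{fucheqn_1} and the preservation lemma for Fuchsian singularities under a single dilation; after possibly passing to diagonal subsequences one can run the induction without obstruction.
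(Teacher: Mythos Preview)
Your proposal is correct and follows exactly the paper's approach: reverse induction along the $m$-step dilation chain, with Theorem~\ref{regularpoint_lap} as the base case, Theorem~\ref{regularity_thm} as the inductive step, and Proposition~\ref{mn_growth} for part (ii). One small bookkeeping slip worth fixing: the conclusion of Theorem~\ref{regularity_thm} applied to $Q^{(k-1)}$ with the sequence $\{R_n^{(k)}\}\to\zeta^{(k)}$ is that $\zeta^{(k)}$ (not $\zeta^{(k-1)}$) is a regular point of $Q^{(k-1)}$; since $\zeta^{(k)}\in\{0,\infty\}$ for $k\ge 2$ this still propagates the induction, and at $k=1$ it gives precisely that $\zeta=\zeta^{(1)}$ is regular for $Q^{(0)}=Q_{p,A,V}$.
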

\begin{proof}
	By Proposition~\ref{mn_growth}, we have (i) $\Rightarrow$ (ii). Thus, we only need to  show that  $\underset{x\rightarrow\zeta}{\lim}\,\frac{u(x)}{v(x)}$ exists in the generalized sense. Since the operator $Q_{p,A,V}$ has a weak Fuchsian-type singularity at $\zeta$, we have  
	\begin{equation}\label{lveqn1}
		\mathcal{D}^{\{R_n^{(m)}\}}\circ\cdots\circ\mathcal{D}^{\{R_n^{(1)}\}}(Q)(w)=-\Delta_{p,\mathbb{A}} (w)=0\qquad \mbox{in }\,\mathbb{R}^d\setminus\{0\},
	\end{equation}
	where $\mathbb{A}\in \R^{d\times d}$ is a symmetric, positive definite matrix. Recall that by Theorem~\ref{regularpoint_lap}  either $0$ or $\infty$ is a regular point of $-\Delta_{p,\mathbb{A}}$. Therefore, Theorem~\ref{regularity_thm} and a reverse induction argument imply that $\zeta$ is a regular point of the equation $Q_{p,A,V}(w)=0$.
\end{proof}
\begin{center}
	{\bf Acknowledgements}
\end{center}
{\small The  authors  acknowledge  the  support  of  the  Israel  Science Foundation (grant  637/19) founded by the Israel Academy of Sciences and Humanities.}

\end{document}